\newtheorem{theorem}{Theorem}
\newtheorem{remark}{Remark}
\newtheorem{problem}{Problem}
\newtheorem{definition}{Definition}
\newtheorem{proposition}{Proposition}
\newtheorem{lemma}{Lemma} 
\newtheorem{corollary}{Corollary}
\crefname{problem}{Problem}{Problems}
\Crefname{problem}{Problem}{Problems}
\newcommand{\lag}{\left \langle}
\newcommand{\rog}{\right \rangle}
\newcommand{\df}{\mathrel{\mathop:}=}
\crefname{problem}{Problem}{Problems}
\Crefname{problem}{Problem}{Problems}
\begin{document}
\date{}
\title{First-order Nilpotent Minimum Logics: first steps}
\author{Matteo Bianchi\\{\small Department of Mathematics ``Federigo Enriques''}\\{\small Università degli Studi di Milano}\\{\footnotesize\texttt{\href{mailto:matteo.bianchi@unimi.it}{matteo.bianchi@unimi.it}}}}
\maketitle
\begin{abstract}
Following the lines of the analysis done in \cite{fgod,mgod} for first-order Gödel logics, we present an analogous investigation for Nilpotent Minimum logic NM.  
We study decidability and reciprocal inclusion of various sets of first-order tautologies of some subalgebras of the standard Nilpotent Minimum algebra. We establish a connection between the validity in an NM-chain of certain first-order formulas and its order type. Furthermore, we analyze axiomatizability, undecidability and the monadic fragments. 
\end{abstract}
\section{Introduction}
Nilpotent Minimum logic (NM) is a many-valued logic firstly introduced in \cite{mtl}: its name is due to the fact that NM is complete w.r.t. the algebraic structure $[0,1]_\text{NM}=\lag [0,1], *,\Rightarrow, \min, \max, 0, 1\rog$, with $*,\Rightarrow$ being Nilpotent Minimum t-norm and its residuum. Triangular norms (t-norms) are particular types of functions, originally introduced in the context of probabilistic metric spaces (see \cite{prob}), that can be used to give the semantics for the conjunction connective of a many-valued logic (see \cite{kmp} for details about t-norms and residua): for every continuous t-norm the associated residuum is an operation (that can be argued to be) suitable to give the semantics for an implication connective.

In 1998, P. Hájek wrote the monograph \cite{haj}, mainly devoted to a family of many-valued logics that is strictly connected to continuous t-norms and their residua: in this framework are also included the famous \L ukasiewicz and Gödel logics (in the following we will discuss more in detail about this last one). 

Nilpotent Minimum t-norm, introduced in \cite{fod}, was an example (probably the first one) of non-continuous but left-continuous t-norm. Left-continuous t-norms are particularly important, since a t-norm admits a residuum if and only if it is left-continuous (\cite{beg}): this fact stimulated analysis like \cite{mtl}, where the Monoidal t-norm based logic (MTL) was introduced, by showing that it is the logic of all left-continuous t-norms and their residua. The logic MTL is at the basis of an ample hierarchy of logics, that includes the ones introduced in \cite{haj}, as well as Nilpotent Minimum logic, that was presented in \cite{mtl}. So the birth of NM was essentially due to technical reasons: in particular, it was presented as an exemplification of a logic (of this framework) associated to a left-continuous t-norm. 
However, it is worth to point out that this logic and the corresponding algebraic semantics offer many interesting features, and during the years, NM and its corresponding variety have been studied under numerous aspects. For example:
\begin{itemize}
\item Combinatorial aspects and description of the free algebras \cite{agm,abm,bus}.
\item States and connection with probability theory (\cite{ag}).
\item Computational complexity for satisfiability and tautologicity problems (\cite{nmcomp}).
\item Connections with others non-classical logics: for example, Nelson's constructive logic (\cite{bc}). 
\item Extensions with truth constants, in the propositional and in the first-order case (\cite{egnfirst,egn,egn1,egn2}).
\item Alternative semantics. This is a (joint) work in progress, but it is possible to give a temporal like semantics (on the line of \cite{tmpg,tbl}) to NM logic. This could be useful to show some other aspects and characteristics of this logic.
\end{itemize}
As previously argued, NM has some relation with Nelson's logic: however, it is also connected with a famous many-valued (and superintuitionistic) logic, namely Gödel logic.

Gödel logic (G) was introduced in \cite{dum} by taking inspiration from a paper by Kurt Gödel (\cite{godint}). This logic was firstly defined as a superintuitionistic logic, but it can also be axiomatized as an axiomatic extension of MTL (see \cite{pmtl,haj}). The algebraic semantics of G is given by a particular class of MTL-algebras (that also forms a subvariety of Heyting algebras), called Gödel-algebras. As pointed out in \cite{pre}, at the propositional level there is only one infinite valued Gödel logic, in the sense that G is complete w.r.t. every infinite totally ordered Gödel-algebra.
In the first-order case G$\forall$, however, the situation is different and there are many infinitely-valued first-order logics: this means that there are many totally ordered Gödel-algebras whose set of first-order tautologies is different w.r.t. the one of G$\forall$.
A deep analysis about first-order Gödel logics, and a general classification has been done in \cite{pre,fgod,mgod} (see also the general survey \cite{presurv}): in particular, in \cite{fgod,mgod} the sets of first-order tautologies associated to the subalgebras of $[0,1]_\text{G}$ (the standard Gödel algebra) have been studied and a general classification about decidability has been provided, also for the monadic fragments.

Which is the previously cited connection, about NM and G?
As shown in \cite{bus}, every NM-chain is isomorphic to the connected or the disconnected rotation (see \cite{bus,jen}) of a Gödel-chain: this shows how strongly related are these two varieties of algebras.
 
We now move back to NM, and to the aim and content of this paper. Whereas the propositional level of NM has been extensively investigated, in the first-order level the situation is more delicate. For example, a systematic analysis like the one done in \cite{fgod,mgod} for Gödel logics is missing, for NM. The aim of this paper is to lay the foundations of a study of this type.

If we consider the logic associated to a totally ordered algebra, then over NM there are only two different infinite-valued logics, at the propositional level: NM and NM$^-$ (see \Cref{rem:nmlog}). At the first-order level, instead, the situation is different: there are infinite totally ordered NM-algebras with negation fixpoint whose set of first-order tautologies is different w.r.t. the one of $[0,1]_\text{NM}$. In this paper we will study the sets of first-order tautologies of some subalgebras of $[0,1]_\text{NM}$: in particular finite NM-chains and other four infinite NM-chains (with and without negation fixpoint). Moreover we will find a connection between the validity, in an NM-chain, of certain first-order formulas and its order type. Finally, we will analyze axiomatization, undecidability and the monadic fragments.

Our investigation has been inspired by the work done for first-order Gödel logic in \cite{fgod} and \cite{mgod}, where a complete classification of the sets of first-order tautologies associated to Gödel-chains (subalgebras of $[0,1]_\text{G}$) has been given. 
Unfortunately, we do not provide here a complete classification for the case of Nilpotent Minimum chains (for this reason the title indicates ``first steps''). The infinite NM-chains discussed in this paper have been chosen essentially for their relations with some particularly important Gödel chains ($G_\uparrow$ and $G_\downarrow$, see the following sections for the definitions): as we will see, this connection will help in the study of the undecidability results. 
\section{Preliminaries}\label{sec:prelim}
\subsection{Syntax}
Nilpotent Minimum logic was introduced in \cite{mtl}, as an extension of the Monoidal t-norm based logic (MTL): this last one is the logic at the base (in the sense that the other logics of this family are obtained by adding axiom to it) of a framework of many-valued logics initially introduced by Hájek in \cite{haj}. MTL was introduced in \cite{mtl}: as shown in \cite{nog} this logic is algebraizable in the sense of \cite{bp} and its equivalent algebraic semantics forms a variety (the variety of MTL-algebras). From the results of \cite{bp,nog} it follows that also every extension of MTL (a logic obtained from it by adding other axioms) is algebraizable in this way.

The language of MTL is based over connectives $\{\&, \land, \to, \bot\}$ (the first three are binary, whilst the last one is $0$-ary). The notion of formula is defined in the usual way. 

Useful derived connectives are the following
\begin{align}
\tag{negation}\neg\varphi\df& \varphi\to\bot\\
\tag{disjunction}\varphi\vee\psi\df& ((\varphi\to\psi)\to\psi)\land((\psi\to\varphi)\to\varphi)\\
\tag{biconditional}\varphi\leftrightarrow\psi\df&(\varphi\to\psi)\land(\psi\to\varphi)\\
\tag{top}\top\df& \neg\bot
\end{align}
For reader's convenience we list the axioms of MTL
\begin{align}
\tag{A1}&(\varphi \rightarrow \psi)\rightarrow ((\psi\rightarrow \chi)\rightarrow(\varphi\rightarrow \chi))\\
\tag{A2}&(\varphi\&\psi)\rightarrow \varphi\\
\tag{A3}&(\varphi\&\psi)\rightarrow(\psi\&\varphi)\\
\tag{A4}&(\varphi\land\psi)\rightarrow \varphi\\
\tag{A5}&(\varphi\land\psi)\rightarrow(\psi\land\varphi)\\
\tag{A6}&(\varphi\&(\varphi\rightarrow \psi))\rightarrow (\psi\land\varphi)\\
\tag{A7a}&(\varphi\rightarrow(\psi\rightarrow\chi))\rightarrow((\varphi\&\psi)\rightarrow \chi)\\
\tag{A7b}&((\varphi\&\psi)\rightarrow \chi)\rightarrow(\varphi\rightarrow(\psi\rightarrow\chi))\\
\tag{A8}&((\varphi\rightarrow\psi)\rightarrow\chi)\rightarrow(((\psi\rightarrow\varphi)\rightarrow\chi)\rightarrow\chi)\\
\tag{A9}&\bot\rightarrow\varphi
\end{align}
As inference rule we have modus ponens:
\begin{equation}
\tag{MP}\frac{\varphi\quad \varphi\rightarrow\psi}{\psi}
\end{equation}
Gödel logic (G) is obtained from MTL by adding
\begin{equation}
\tag{id}\varphi\to\varphi\&\varphi.
\end{equation}
Nilpotent Minimum Logic (NM), introduced in \cite{mtl}, is obtained from MTL by adding the following axioms:
\begin{align}
\tag{involution}&\neg\neg\varphi\to\varphi\\
\tag{WNM}&\neg(\varphi\&\psi)\vee((\varphi\land\psi)\to(\varphi\&\psi)).
\end{align}
The notions of theory, syntactic consequence, proof are defined as usual.
\subsection{Semantics}\label{subsec:sem}
An MTL algebra is an algebra $\lag A,*,\Rightarrow,\sqcap,\sqcup,0,1\rog$ such that
\begin{enumerate}
\item $\lag A,\sqcap,\sqcup, 0,1\rog$ is a bounded lattice with minimum $0$ and maximum $1$.
\item $\lag A,*,1 \rog$ is a commutative monoid.
\item $\lag *,\Rightarrow \rog$ forms a \emph{residuated pair}: $z*x\leq y$ iff $z\leq x\Rightarrow y$ for all $x,y,z\in A$.
\item The following axiom hold, for all $x,y\in A$:
\begin{equation}
\tag{Prelinearity}(x\Rightarrow y)\sqcup(y\Rightarrow x)=1
\end{equation}
A totally ordered MTL-algebra is called MTL-chain.
\end{enumerate}
A G-algebra is an MTL-algebra satisfying
\begin{equation*}
x=x*x.
\end{equation*}
It is well known (see for example \cite{dm,pre}) that in every G-chain the following hold:
\begin{align*}
x*y=&\min(x,y)\\
x\Rightarrow y=&
\begin{cases}
1&\text{if }x\leq y\\
y&\text{Otherwise.}
\end{cases}
\end{align*}
Some examples of G-chains are the following:
\begin{itemize}
\item $G_\uparrow=\lag \{1-\frac{1}{n}:\ n\in\mathbb{N}^+\}\cup\{1\},*,\Rightarrow,\min,\max,0,1\rog$
\item $G_\downarrow=\lag \{\frac{1}{n}:\ n\in\mathbb{N}^+\}\cup \{0\},*,\Rightarrow,\min,\max,0,1\rog$
\item $G_n=\lag \{0,\frac{1}{n-1},\dots,\frac{n-2}{n-1},1\},*,\Rightarrow,\min,\max,0,1\rog$
\item $[0,1]_\text{G}=\lag [0,1],*,\Rightarrow,\min,\max,0,1\rog$
\end{itemize}
In particular it is easy to check (see \cite{pre}) that every finite G-chain of cardinality $n$ is isomorphic to $G_n$.
\paragraph*{}
An NM-algebra is an MTL-algebra that satisfies the following equations:
\begin{align*}
&\sim\sim x=x\\
&\sim(x*y)\sqcup((x\sqcap y)\Rightarrow(x*y))=1
\end{align*}
Where $\sim x$ indicates $x\Rightarrow 0$.

\smallskip
Moreover, as noted in \cite{gis}, in \emph{each} NM-chain the following hold:
\begin{align*}
x*y=&
\begin{cases}
0&\text{if }x\leq n(y)\\
\min(x,y)&\text{Otherwise.}
\end{cases}\\
x\Rightarrow y=&
\begin{cases}
1&\text{if }x\leq y\\
\max(n(x),y)&\text{Otherwise.}
\end{cases}
\end{align*}
Where $n$ is a strong negation function, i.e. $n:A\to A$ is an order-reversing mapping ($x<y$ implies $n(x)>n(y)$) such that $n(0)=1$ and $n(n(x))=x$, for each $x\in A$. Observe that $n(x)=x\Rightarrow 0=\sim x$, for each $x\in A$.

A negation fixpoint is an element $x\in A$ such that $n(x)=x$: note that if this element exists then it must be unique (otherwise $n$ fails to be order-reversing).
A positive element is an $x\in A$ such that $x>n(x)$; the definition of negative element is the dual (substitute $>$ with $<$).

Concerning the finite chains, in \cite{gis} it is shown that two finite NM-chains with the same cardinality are isomorphic (see the remarks after \cite[Proposition 2]{gis}): for this reason we will denote them with $NM_n$, $n$ being an integer greater that $1$.

We now give some examples of infinite NM-chains that will be useful in the following: for all of them the order is given by $\leq_\mathbb{R}$ and $n(x)=1-x$.

\smallskip
\begin{itemize}
\item $NM_\infty=\lag \{\frac{1}{n}:\ n\in\mathbb{N}^+\}\cup \{1-\frac{1}{n}:\ n\in\mathbb{N}^+\},*,\Rightarrow,\min,\max,0,1\rog$
\item $NM_\infty^-=\lag \{\{\frac{1}{n}:\ n\in\mathbb{N}^+\}\cup \{1-\frac{1}{n}:\ n\in\mathbb{N}^+\}\}\setminus\{\frac{1}{2}\},*,\Rightarrow,\min,\max,0,1\rog$
\item $NM'_\infty=\lag \{\frac{1}{2}-\frac{1}{2n}:\ n\in\mathbb{N}^+\}\cup \{\frac{1}{2}+\frac{1}{2n}:\ n\in\mathbb{N}^+\}\cup\{\frac{1}{2}\},*,\Rightarrow,\min,\max,0,1\rog$
\item ${NM'}^-_\infty=\lag \{\frac{1}{2}-\frac{1}{2n}:\ n\in\mathbb{N}^+\}\cup \{\frac{1}{2}+\frac{1}{2n}\}:\ n\in\mathbb{N}^+\},*,\Rightarrow,\min,\max,0,1\rog$
\item $[0,1]_\text{NM}=\lag [0,1],*,\Rightarrow,\min,\max,0,1\rog$
\end{itemize}
In this last case, $*$ is called Nilpotent Minimum t-norm \cite{fod}. Note that the first four chains of the list and every finite NM-chain\footnote{Since two finite NM-chains with the same cardinality are isomorphic, then we can consider $NM_n$ as defined over the set $\{0,\frac{1}{n-1},\dots,\frac{n-1}{n-1}\}$.} are all subalgebras of $[0,1]_\text{NM}$.

The notion of assignment, model, satisfiability and tautology are defined as usual: we refer to \cite{mtl} for details.
\begin{theorem}[\cite{pre,gis}]\label{teo:log}
\begin{itemize}
\item Every infinite Gödel-chain is complete w.r.t. Gödel logic.
\item Every infinite NM-chain with negation fixpoint is complete w.r.t. Nilpotent Minimum logic.
\end{itemize}
\end{theorem}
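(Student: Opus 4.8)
The plan is to prove both items by one and the same scheme. Soundness is immediate in each case (a theorem of the logic is valid in every algebra of the corresponding variety, a fortiori in the chain at hand), so the whole content is the converse: if a propositional formula $\varphi$ is \emph{not} a theorem, it must fail in the given infinite chain $C$. Two ingredients are used. First, the standard completeness of every axiomatic extension of MTL with respect to its totally ordered models: by algebraizability together with the prelinearity axiom, the associated variety is generated by its chains, so $\not\vdash\varphi$ yields an $L$-chain $B$ and an evaluation $e$ into $B$ with $e(\varphi)\neq 1$. Second, a small-subalgebra observation: the finitely many values taken by $e$ on the subformulas of $\varphi$ generate a \emph{finite} subchain of $B$, which one then embeds into $C$.

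For Gödel logic, let $S$ be the set of the values $e(\psi)$ for $\psi$ a subformula of $\varphi$, together with $0$ and $1$. Since in every G-chain $x*y=\min(x,y)$ and $x\Rightarrow y\in\{1,y\}$, the set $S$ is closed under all the operations (each output is one of the arguments, or $0$, or $1$), hence it is a finite G-subalgebra of $B$; being a finite G-chain it is isomorphic to $G_m$, $m=|S|$. As $C$ is infinite it contains an $m$-element subset containing $0$ and $1$, which by the same remark is a G-subalgebra, hence isomorphic to $G_m$; this gives an embedding $h\colon G_m\hookrightarrow C$. Reading $e$ through $S\cong G_m$ and composing with $h$ produces an evaluation into $C$ falsifying $\varphi$, so $\varphi$ is not a tautology of $C$.

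For NM the scheme is the same, with one point requiring care — and it is exactly here that the fixpoint hypothesis is used. Given $\not\vdash_{\mathrm{NM}}\varphi$, take $B$, $e$ as above, let $S_0$ collect the subformula values together with $0,1$, and put $S=S_0\cup\{n(x):x\in S_0\}$. Using that in every NM-chain $*$ and $\Rightarrow$ are definable from $\min$, $\max$, $n$, $0$, $1$ by the formulas recalled in Section~\ref{subsec:sem}, and that $S$ is a finite chain closed under $n$, one checks that $S$ is a finite NM-subalgebra; by the classification of finite NM-chains in \cite{gis} it is isomorphic to $NM_m$, $m=|S|$. It remains to embed $NM_m$ into the infinite NM-chain $C$. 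Note first that any subset of an NM-chain containing $0,1$ and closed under $n$ is automatically a subalgebra (same elementary computation). Now $n$ restricts to an order-reversing bijection between the positive and the negative elements of $C$, so — $C$ being infinite — both of these sets are infinite. If $m$ is even, $NM_m$ has no negation fixpoint and one only needs to choose $m/2$ suitable positive elements of $C$ (the greatest of them being $1$), their $n$-images supplying the negative ones. If $m$ is odd, $NM_m$ has a negation fixpoint, which must be sent to a negation fixpoint of $C$: here we invoke the hypothesis that $C$ possesses a negation fixpoint $f$, mapping the fixpoint of $NM_m$ to $f$ and the $(m-1)/2$ positive elements of $NM_m$ to positive elements of $C$. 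Either way one obtains an $n$-closed $m$-element subset of $C$ containing $0,1$, i.e. a copy of $NM_m$ inside $C$, and composing with $e$ falsifies $\varphi$ in $C$.

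The only genuine obstacle is this last step in the NM case: when the falsifying evaluation forces a subchain of odd cardinality, the resulting copy of $NM_m$ carries a negation fixpoint with no possible target unless $C$ itself has one — which is precisely why ``with negation fixpoint'' cannot be dropped (and why, e.g., $NM_\infty^-$ and ${NM'}_\infty^-$ of Section~\ref{subsec:sem} are not complete for NM). Everything else — soundness, chain-completeness of extensions of MTL, and the verification that $\{0,1\}$-containing, $n$-closed subsets are subalgebras — is routine.
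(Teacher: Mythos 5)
The paper states this theorem as an imported result, citing \cite{pre} and \cite{gis} without giving a proof, so there is no in-paper argument to compare against. Your proof is correct and is essentially the standard argument underlying those references: chain-completeness of axiomatic extensions of MTL, the observation that in a G-chain (resp.\ an NM-chain) any subset containing $0,1$ (resp.\ containing $0,1$ and closed under $n$) is a subalgebra, hence the falsifying evaluation lives in a finite subchain isomorphic to $G_m$ (resp.\ $NM_m$), and the embedding of that finite chain into the given infinite chain. You also correctly isolate the one place where the fixpoint hypothesis is indispensable: an order-reversing involution on an odd-cardinality subchain has a fixpoint, whose image under any embedding must again be a fixpoint.
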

Concerning the variety of NM-algebras, we have the following result:
\begin{theorem}[{\cite[Theorem 2]{gis}}]\label{teo:fixpointeq}
\begin{enumerate}
\item[]
\item An NM-chain satisfies
\begin{equation}
\tag{$S_n(x_0,\dots,x_n)$}\bigwedge_{i<n}((x_i \to x_{i+1}) \to x_{i+1}) \to \bigvee_{i<n+1}x_i
\end{equation}
if and only if it has less than $2n + 2$ elements.
\item A nontrivial NM-chain satisfies
\begin{equation}\label{eq:fix}
\tag{$BP(x)$}\neg(\neg x^2)^2\leftrightarrow(\neg(\neg x)^2)^2
\end{equation}
if and only if it does not contain the negation fixpoint.
\end{enumerate}
\end{theorem}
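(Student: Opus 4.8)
The strategy is to push everything down to pointwise identities in an arbitrary NM-chain $A$ with negation $n$, and then do some counting. Two facts are immediate from the displayed formulas for $*$ and $\Rightarrow$: (a) $a^{2}\df a\& a$ equals $a$ when $a>n(a)$ and equals $0$ when $a\le n(a)$; and (b) $(x\Rightarrow y)\Rightarrow y$ equals $1$ when $x>y\ge n(x)$, and equals $x\sqcup y$ in every other case. We also use that in a chain $\alpha\to\beta$ has value $1$ iff the value of $\alpha$ is $\le$ that of $\beta$, and $\alpha\leftrightarrow\beta$ has value $1$ iff the two values coincide; so ``$A$ satisfies an equation-like $\varphi$'' just means a pointwise identity.

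\emph{Item 1.} For an assignment $x_{i}\mapsto a_{i}$ put $\delta_{i}\df(a_{i}\Rightarrow a_{i+1})\Rightarrow a_{i+1}$; then this assignment falsifies $S_{n}$ iff $\bigwedge_{i<n}\delta_{i}>\bigvee_{i<n+1}a_{i}$. By (b), each $\delta_{i}$ is either $1$ or $a_{i}\sqcup a_{i+1}$, and $a_{i}\sqcup a_{i+1}\le\bigvee_{i<n+1}a_{i}$; hence that strict inequality can occur only when every $\delta_{i}=1$ and $\bigvee_{i}a_{i}<1$, and then it does occur. So $A\not\models S_{n}$ iff there are $a_{0}>a_{1}>\dots>a_{n}$ in $A$ with $a_{0}<1$ and $n(a_{i})\le a_{i+1}$ for $i<n$; all such elements then lie strictly between $0$ and $1$ (from $a_{0}<1$ and $a_{n}\ge n(a_{n-1})>0$). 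Assume such a sequence exists. If $A$ is infinite, trivially $|A|\ge 2n+2$. If $A$ is finite, enumerate it as $0=c_{0}<c_{1}<\dots<c_{m}=1$; since $n$ is the unique order-reversing involution of this chain, $n(c_{j})=c_{m-j}$, so writing $a_{i}=c_{j_{i}}$ the conditions become $m-1\ge j_{0}>j_{1}>\dots>j_{n}\ge1$ and $j_{i}+j_{i+1}\ge m$ for $i<n$. From $2j_{i}>j_{i}+j_{i+1}\ge m$ it follows that $j_{0},\dots,j_{n-1}$ are $n$ distinct integers in $(m/2,\,m-1]$, an interval containing $\lceil m/2\rceil-1$ integers; hence $n\le\lceil m/2\rceil-1$, forcing $m\ge 2n+1$, i.e.\ $|A|\ge 2n+2$. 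This proves ``$|A|<2n+2\Rightarrow A\models S_{n}$''. Conversely, if $|A|\ge 2n+2$ then a count of positive elements shows $A$ has at least $n$ positive elements below $1$, and these together with $0,1$ and their negations form a subalgebra isomorphic to $NM_{2n+2}=\{c_{0}<\dots<c_{2n+1}\}$; in it the assignment $x_{i}\mapsto c_{2n-i}$ falsifies $S_{n}$, since each $\delta_{i}=1$ reduces to the true inequality $n(c_{2n-i})=c_{i+1}\le c_{2n-i-1}$ while $\bigvee_{i}c_{2n-i}=c_{2n}<1$. Hence $A\models S_{n}$ iff $|A|<2n+2$.

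\emph{Item 2.} Unwinding the connectives, in a nontrivial $A$ the formula $BP(x)$ holds iff
\[ n\bigl(n(a^{2})*n(a^{2})\bigr)=n\bigl(n(a)^{2}\bigr)*n\bigl(n(a)^{2}\bigr)\qquad\text{for every }a\in A. \]
By (a) the sign of $a$ determines $a^{2}\in\{0,a\}$ and $n(a)^{2}\in\{0,n(a)\}$, and a direct check in the three cases gives: if $a$ is positive, both sides equal $1$; if $a$ is negative, both sides equal $0$; if $a$ is the negation fixpoint, the left side is $0$ while the right side is $1$. Since $0\ne1$ in a nontrivial chain, $BP(x)$ holds in $A$ iff $A$ contains no negation fixpoint; the nontriviality hypothesis is essential, since the one-element chain has a fixpoint yet satisfies every equation.

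The pointwise computations aside, the step that needs care is the reduction opening Item 1: recognizing that \emph{any} assignment falsifying $S_{n}$ is forced to send every conjunct $\delta_{i}$ to $1$. That observation is what converts the problem into the counting argument above; after it, the only remaining subtlety is tracking the parity of $m$ so as to land exactly on the threshold $2n+2$ rather than on a weaker cardinality bound.
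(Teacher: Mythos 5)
Your proof is correct. Note, however, that the paper does not prove this statement at all: it is imported verbatim from the cited reference (Gispert's Theorem 2) and used as a black box, so there is no in-paper argument to compare yours against. What you have supplied is a self-contained verification, and it checks out: fact (a) on squares and fact (b) on $(x\Rightarrow y)\Rightarrow y$ follow directly from the displayed NM-chain operations; the reduction of a falsifying assignment for $S_n$ to a strictly decreasing chain $a_0>\dots>a_n$ with $a_0<1$ and $n(a_i)\le a_{i+1}$ is sound (the key point, as you say, being that $\bigvee_i a_i<1$ rules out the $a_i\sqcup a_{i+1}=1$ escape, forcing every $\delta_i=1$ via $a_i>a_{i+1}\ge n(a_i)$); the index count $j_i>m/2$ correctly yields $m\ge 2n+1$, hence $|A|\ge 2n+2$; and the converse construction inside the $(2n+2)$-element subalgebra, with $x_i\mapsto c_{2n-i}$, does falsify $S_n$. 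The three-case computation for $BP(x)$ (positive, negative, fixpoint) is also correct, and you are right that nontriviality is needed only to distinguish $0$ from $1$ in the fixpoint case. The one presentational gap is the phrase ``a count of positive elements shows'': it is worth one sentence to record that $n$ restricts to a bijection between positive and negative elements, so $|A|\ge 2n+2$ forces at least $n$ positive elements strictly below $1$ whether or not a fixpoint is present.
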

\begin{remark}\label{rem:nmlog}	
As pointed out in \Cref{teo:log}, at the propositional level there is only one infinite-valued Gödel logic, that is every infinite G-chain generates the whole variety of G-algebras.

In the case of NM this is not true: indeed in \cite{gis} (see also \cite{ct}) it is shown that the variety generated by an infinite NM-chain without negation fixpoint corresponds to the one associated to the logic NM$^-$, i.e. NM plus \ref{eq:fix}. This result together with \Cref{teo:log} imply that there are (if we restrict to the logics associated to a totally ordered algebra) two different infinite-valued Nilpotent Minimum logic (NM and NM$^-$), at the propositional level.
\end{remark}
We now introduce a construction that allows to obtain an NM-chain starting from a G\"{o}del chain. This construction is an application of the ``connected rotation'' introduced in \cite{jen}.
\begin{definition}\label{def:rot}
Let $\mathcal{A}$ be a Gödel chain. We can construct an NM-chain $\mathcal{A}_\text{NM}$ in the following way:
\begin{itemize}
\item $A_\text{NM}=B \cup \{f\} \cup B'$, where $\lag B,\leq_{\mathcal{A}_\text{NM}}\rog=\lag A\setminus\{0\},\leq_{\mathcal{A}}\rog$ and $\lag B'=\{b': b\in B\},\leq_{\mathcal{A}_\text{NM}}\rog\simeq \lag B,\geq_{\mathcal{A}_\text{NM}}\rog$.
\item For every $x\in B, y\in B'$ set $x>_{\mathcal{A}_\text{NM}}f>_{\mathcal{A}_\text{NM}}y$.
\item Define a strong negation function $n:A_\text{NM}\to A_\text{NM}$ such that $n(f)=f$, $n(a)=a'$ and $n(b')=b$, for every $a\in B$ and $b'\in B'$.
\end{itemize}
It is easy to see that $\mathcal{A}_\text{NM}$ has negation fixpoint $f$, $B$ is the set of positive elements and $B'$ the set of negative elements: note that $1=\max(B)$ and $1'=\min(B')$ are the maximum and minimum of $\mathcal{A}_\text{NM}$. The element $1'$ will be called $0$.
\end{definition}
\begin{remark}\label{rem:nmcomp}
\begin{itemize}
\item An immediate consequence of \Cref{def:rot} is that $\lag A,\leq_\mathcal{A}\rog$ is order isomorphic to $\lag B\cup\{f\},\leq_{\mathcal{A}_\text{NM}}\rog$. From this fact it is easy to check that $\mathcal{A}$ is complete if and only if $\mathcal{A}_\text{NM}$ is.
\item It is an exercise to check that if $\mathcal{A}=G_\uparrow$, then $\mathcal{A}_\text{NM}=NM_\infty$, and if $\mathcal{A}=G_\downarrow$, then $\mathcal{A}_\text{NM}={NM'}_\infty$. Moreover $NM_\infty^-$, and ${NM'}_\infty^-$ are the subalgebras without negation fixpoint of, respectively, $NM_\infty^-$, and ${NM'}_\infty^-$.
\end{itemize}
\end{remark}

\subsection{First-order Nilpotent Minimum and Gödel Logics}
In this section we present the first-order versions of NM and G, called NM$\forall$ and G$\forall$: more details can be found in \cite{mtl,haj}.

A first-order language (we restrict to countable languages) is a pair $\lag \mathbf{P},\mathbf{C}\rog$, where $\mathbf{P}$ is a set of predicate symbols and $\mathbf{C}$ a set of constants (in general we do not need function symbols: see \cite{pred} for a development in this sense): we have the ``classical'' quantifiers $\forall,\exists$. The notions of term, formula, closed formula, term substitutable in a formula are defined like in the classical case (\cite{pred}, \cite{haj}), the connectives are those of the propositional level.

A theory is a set of closed formulas.
\paragraph*{}
Let L be NM or G or an axiomatic extension of them: then its first-order version, L$\forall$, is axiomatized as follows:
\begin{itemize}
\item The axioms resulting from the axioms of L by the substitution of the propositional variables by the first-order formulas
\item The following axioms\footnote{For the case of NM$\forall$, in \cite[theorems 2.31 and 2.32]{pred} it is showed that the axioms $(\exists 1)$ and $(\exists 2)$ are redundant: to maintain the notation of \cite{mtl} we give the full list.}:
\begin{align}
\tag{$\forall 1$}&(\forall x)\varphi(x)\rightarrow \varphi(x/t)\text{( }t\text{ substitutable for }x\text{ in }\varphi(x)\text{)}\\
\tag{$\exists 1$}&\varphi(x/t)\rightarrow (\exists x)\varphi(x)\text{( }t\text{ substitutable for }x\text{ in }\varphi(x)\text{)}\\
\tag{$\forall 2$}&(\forall x)(\nu \rightarrow \varphi)\rightarrow (\nu \rightarrow (\forall x)\varphi)\text{ (}x\text{ not free in }\nu\text{)}\\
\tag{$\exists 2$}&(\forall x)(\varphi \rightarrow \nu)\rightarrow ((\exists x)\varphi\rightarrow \nu)\text{ (}x\text{ not free in }\nu\text{)}\\
\tag{$\forall 3$}&(\forall x)(\varphi \vee \nu)\rightarrow ((\forall x)\varphi \vee \nu)\text{ (}x\text{ not free in }\nu\text{)}
\end{align}
\end{itemize}
The rules of L$\forall$ are: Modus Ponens: $\frac{\varphi\quad\varphi\to\psi}{\psi}$ and Generalization: $\frac{\varphi}{\forall x\varphi}$.
\paragraph*{}
As regards to semantics, we need to restrict to L-chains: given an L-chain $\mathbf{A}$, an $\mathbf{A}$-interpretation (or $\mathbf{A}$-model) is a structure $\mathbf{M}=\lag M,\{m_c\}_{c\in \mathbf{C}},\{r_P\}_{P\in \mathbf{P}}\rog$, where
\begin{itemize}
\item M is a non empty set.
\item for each $c\in \mathbf{C}$, $m_c\in M$
\item for each $P\in \mathbf{P}$ of arity\footnote{If $P$ has arity zero, then $r_P\in A$.} $n$, $r_P:M^n\to A$.
\end{itemize}
 For each evaluation over variables $v:Var\to M$, the truth value of a formula $\varphi$ ($\Vert\varphi\Vert_{\mathbf{M},v}^\mathbf{A}$) is defined inductively as follows:
\begin{itemize}
 \item $\Vert P(x,\dots,c,\dots)\Vert_{\mathbf{M},v}^\mathbf{A}=r_P(v(x),\dots,m_c,\dots)$
\item The truth value commutes with the connectives of L$\forall$, i.e.
\begin{align*}
\Vert\varphi\rightarrow\psi\Vert^\mathbf{A}_{\mathbf{M},v}&=\Vert\varphi\Vert^\mathbf{A}_{\mathbf{M},v}\Rightarrow\Vert\psi\Vert^\mathbf{A}_{\mathbf{M},v}\\
\Vert\varphi\&\psi\Vert^\mathbf{A}_{\mathbf{M},v}&=\Vert\varphi\Vert^\mathbf{A}_{\mathbf{M},v}*\Vert\psi\Vert^\mathbf{A}_{\mathbf{M},v}\\
\Vert \bot \Vert^\mathbf{A}_{\mathbf{M},v}=0&;\ \Vert \top \Vert^\mathbf{A}_{\mathbf{M},v}=1\\ \Vert\varphi\land\psi\Vert^\mathbf{A}_{\mathbf{M},v}&=\Vert\varphi\Vert^\mathbf{A}_{\mathbf{M},v}\sqcap\Vert\psi\Vert^\mathbf{A}_{\mathbf{M},v}
\end{align*}
\item $\Vert(\forall x)\varphi\Vert_{\mathbf{M},v}^\mathbf{A}=\inf\{\Vert\varphi\Vert_{M,v'}^\mathbf{A}:\ v'\equiv_x v$, i.e. $v'(y)=v(y)$ for all variables except for $x\}$
\item $\Vert(\exists x)\varphi\Vert_{\mathbf{M},v}^\mathbf{A}=\sup\{\Vert\varphi\Vert_{M,v'}^\mathbf{A}:\ v'\equiv_x v$, i.e. $v'(y)=v(y)$ for all variables except for $x\}$
\end{itemize}
if these $\inf$ and $\sup$ exist in $\mathbf{A}$, otherwise the truth value is undefined.

A model $\mathbf{M}$ is called $\mathbf{A}$-safe if all $\inf$ e $\sup$ necessary to define the truth value of each formula exist in $\mathbf{A}$. In this case, the truth value of a formula $\varphi$ over an $\mathbf{A}$-safe model is
\begin{equation*}
\Vert \varphi \Vert_\mathbf{M}^\mathbf{A}=\inf\{\Vert \varphi\Vert_{\mathbf{M},v}^\mathbf{A}:\ v:\ Var\to M\}
\end{equation*}
Note that if $\mathbf{A}$ is a standard algebra or has a lattice-reduct that is a complete lattice, then each $\mathbf{A}$-model is safe; obviously each finite $\mathbf{A}$-model ($M$ finite) is safe.

Finally, the notions of completeness are defined analogously to propositional level, with the difference that, with the notation $\models_\mathbf{A}\varphi$, we mean that $\Vert \varphi \Vert_{\mathbf{M}}^\mathbf{A}=1$, for each safe $\mathbf{A}$-interpretation $\mathbf{M}$.
\begin{theorem}[{\cite[Theorem 5.3.3]{haj},\cite[theorem 9]{mtl}}]\label{teo:stdcomp}
Let $\text{L}\in\{\text{NM,G}\}$. For each theory $T$ and formula $\varphi$ it holds that
\begin{equation*}
T\models_{L\forall}\varphi\qquad\text{iff}\qquad T\models_{[0,1]_\text{L}}\varphi.
\end{equation*}
\end{theorem}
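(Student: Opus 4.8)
I would read the displayed equivalence as relating two notions of consequence, where $T\models_{L\forall}\varphi$ denotes — equivalently, by the general first-order soundness and completeness theorem for these logics — provability in $L\forall$ or semantic consequence over the class of \emph{all} L-chains. Under this reading the left-to-right implication is immediate: $[0,1]_\text{L}$ is itself an L-chain, so every safe $[0,1]_\text{L}$-model of $T$ is in particular a safe L-chain model of $T$, and the definition of $\models_{L\forall}$ applies verbatim. The substantive content is the converse, and the plan is to prove its contrapositive: if $T\not\vdash_{L\forall}\varphi$ then, by the general completeness theorem, there are an L-chain $\mathbf{A}$ and a safe $\mathbf{A}$-model $\mathbf{M}$ with $\mathbf{M}\models T$ and $\Vert\varphi\Vert_{\mathbf{M}}^{\mathbf{A}}<1$, and I must transport this counterexample to $[0,1]_\text{L}$.

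The decisive observation is that for $\text{L}\in\{\text{NM},\text{G}\}$ the operations $*$ and $\Rightarrow$ of any L-chain are completely determined by the underlying linear order (and, in the NM case, by the strong negation $n$): this is exactly the content of the explicit formulas for $*$ and $\Rightarrow$ recalled in \Cref{subsec:sem}. Consequently, any map between L-chains that preserves the order, the bounds, and (for NM) $n$ is automatically an L-chain embedding; and if it moreover preserves all the infima and suprema actually used to evaluate the subformulas of $T\cup\{\varphi\}$ in $\mathbf{M}$, then composing the predicate interpretations of $\mathbf{M}$ with it produces a safe model over the target chain with the same truth values for those formulas, hence again a counterexample. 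So the task reduces to exhibiting a suitable embedding into $[0,1]_\text{L}$. Here I would use that the completeness theorem can be taken to deliver a \emph{countable} counterexample — the canonical model for a suitably prime and witnessed extension of $T$ has a countable domain since the language is countable — so the set $X\subseteq A$ of truth values of the relevant (sub)formulas is a countable L-subchain of $\mathbf{A}$ carrying the (countably many) infima and suprema needed for the evaluation. For $\text{L}=\text{G}$ one then invokes that every countable G-chain order-embeds into $[0,1]_\text{G}$, choosing the embedding (by a back-and-forth argument, using density and completeness of $[0,1]$) so that each relevant infimum/supremum of $X$ is realized as a genuine infimum/supremum in $[0,1]$. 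For $\text{L}=\text{NM}$ one first, if the relevant chain lacks the negation fixpoint, passes to an extension containing it via the rotation construction of \Cref{def:rot} and \Cref{rem:nmcomp}, writes that chain as the rotation $\mathcal{B}_{\text{NM}}$ of a countable G-chain $\mathcal{B}$, embeds $\mathcal{B}$ into $[0,1]_\text{G}$, and observes that this induces an embedding of $\mathcal{B}_{\text{NM}}$ into $[0,1]_\text{NM}$ commuting with the negations.

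The main obstacle is precisely this last bookkeeping: one must verify that passing to a countable canonical model, and (for NM) to a fixpoint-containing extension, does not disturb any of the suprema and infima that are used to compute $\Vert\varphi\Vert$ and the values of the axioms of $T$ — in other words, that the inequality $\Vert\varphi\Vert<1$ genuinely survives each step — and that the back-and-forth construction can always accommodate the countably many "realize this inf/sup" demands simultaneously (which it can, since in a countable chain every existing infimum is the limit of a monotone $\omega$-sequence, and these can be spread out to converge in the reals). Once this is checked, the resulting safe $[0,1]_\text{L}$-model still gives $\Vert\varphi\Vert<1$, whence $T\not\models_{[0,1]_\text{L}}\varphi$, completing the contrapositive. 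All the embedding facts used here are the first-order counterparts of the propositional results of \cite{pre,gis}, and the argument is carried out in detail in \cite{haj,mtl}, to which I would ultimately defer for the routine parts.
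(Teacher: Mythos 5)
The paper offers no proof of this theorem --- it is quoted from \cite{haj} (Theorem 5.3.3) and \cite{mtl} (Theorem 9) --- and your sketch correctly reconstructs the argument of those sources: the left-to-right direction is soundness (the standard algebra is one L-chain among many), and the converse follows from the Henkin--Lindenbaum chain-completeness theorem together with an embedding of the resulting countable counterexample chain into $[0,1]_\text{L}$ preserving the countably many infima and suprema actually used in the evaluation. Your NM-specific detour through the rotation of a G\"odel chain is a harmless variant of the direct embedding in \cite{mtl}, and the step you flag without checking --- that adjoining the negation fixpoint disturbs no existing infimum or supremum --- does hold, because in a fixpoint-free NM-chain a set of positive elements can only have an infimum that is its minimum or another positive element (an infimum equal to the greatest negative element would, via the negation, force the positive part to have a minimum lying strictly above that infimum, a contradiction), so every infimum or supremum available to a safe model survives the passage to the chain with fixpoint.
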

\begin{remark}
Henceforth we will assume that the first-order language (of the type specified in the previous section) is fixed.
\end{remark}
\section{First-order Nilpotent Minimum logics}
In this section we study and compare the sets of (first-order) tautologies associated to four different infinite NM-chains (NM$_\infty$, NM${'}_\infty$, NM$_\infty^-$, NM${'}_\infty^-$), and to the finite ones. The choice of the four infinite chains, as explained in \Cref{rem:nmcomp}, is due to their relations with the Gödel chains $G_\uparrow$ and $G_\downarrow$: as we will see in \Cref{subsec:undec} this connection will help in the analysis of decidability problems.

\paragraph*{}Let $\mathcal{A}$ be an NM-chain: with the notation $TAUT_\mathcal{A}\forall$ we will denote the first-order tautologies of $\mathcal{A}$.
\begin{theorem}\label{teo:incstd}
For every NM-chain $\mathcal{A}$ it holds that $TAUT_{[0,1]_\text{NM}}\forall\subseteq TAUT_\mathcal{A}\forall$.
\end{theorem}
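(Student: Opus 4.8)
The plan is to reduce the inclusion to the standard completeness theorem for first-order NM together with the soundness of $\text{NM}\forall$ over arbitrary NM-chains. First I would instantiate \Cref{teo:stdcomp} with the empty theory: this gives that a formula $\varphi$ belongs to $TAUT_{[0,1]_\text{NM}}\forall$ (i.e. $\models_{[0,1]_\text{NM}}\varphi$) precisely when $\varphi$ is valid in \emph{every} NM-chain on its safe models (equivalently, when $\varphi$ is provable in $\text{NM}\forall$). Hence it suffices to show that any such $\varphi$ evaluates to $1$ under every safe $\mathcal{A}$-interpretation, for an arbitrary NM-chain $\mathcal{A}$; that is, it suffices to prove soundness of $\text{NM}\forall$ with respect to $\mathcal{A}$.

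Soundness is established by induction on the length of a derivation. For the base case one separates the propositional axiom schemata from the quantifier axioms $(\forall 1)$–$(\forall 3)$, $(\exists 1)$, $(\exists 2)$. An instance of a propositional axiom of NM is valued $1$ because $\mathcal{A}$, being an NM-chain, is in particular an NM-algebra, hence satisfies the corresponding identities (the MTL identities together with involution and WNM), and $\Vert\cdot\Vert^{\mathcal{A}}_{\mathbf{M},v}$ factors through the term algebra. The quantifier axioms are checked directly from the definitions of $\Vert(\forall x)\psi\Vert$ and $\Vert(\exists x)\psi\Vert$ as infima and suprema in the chain $\mathcal{A}$ — e.g. $(\forall 1)$ because an infimum is a lower bound, $(\forall 3)$ because in a linear order $\inf$ interacts with $\sqcup$ in the required way — exactly as done for first-order MTL in \cite{mtl,haj}. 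For the inductive step: modus ponens is handled by the identity $1\Rightarrow y=y$, valid in every residuated lattice, and generalization by $\inf\{1\}=1$; both infima involved obviously exist.

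I do not expect a genuine obstacle here; the only point needing a word of care is that a general NM-chain $\mathcal{A}$ need not be complete, so some infima or suprema required to evaluate subformulas may fail to exist in $\mathcal{A}$. But the definition of $\models_{\mathcal{A}}$ — and hence of $TAUT_{\mathcal{A}}\forall$ — ranges only over $\mathcal{A}$-safe models, on which by definition all the needed infima and suprema do exist, so the soundness argument goes through verbatim. Alternatively, one can avoid mentioning provability altogether: by \Cref{teo:stdcomp} the set $TAUT_{[0,1]_\text{NM}}\forall$ coincides with the set of formulas valid in \emph{every} NM-chain, which is trivially contained in $TAUT_{\mathcal{A}}\forall$.
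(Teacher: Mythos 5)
Your proposal is correct and follows essentially the same route as the paper: combine the standard completeness theorem (\Cref{teo:stdcomp}) with the chain completeness/soundness of $\text{NM}\forall$ over arbitrary NM-chains (which the paper simply cites from \cite{mtl} rather than re-proving by induction as you sketch). Your closing "alternative" formulation is in fact exactly the paper's one-line argument.
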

\begin{proof}
Immediate from \Cref{teo:stdcomp} and chain completeness theorem for NM$\forall$ (see \cite[theorem 7]{mtl}).
\end{proof}
A general result, concerning the subalgebras of $[0,1]_\text{NM}$, is the following.
\begin{proposition}\label{prop:inc}
Let $V,W$ be the universes of two subalgebras $\mathcal{V},\mathcal{W}$ of $[0,1]_\text{NM}$ (i.e. $V,W$ are two subsets of $[0,1]$ closed w.r.t. $n(x)=1-x$). If $V\subseteq W$, then $TAUT_\mathcal{W}\forall\subseteq TAUT_\mathcal{V}\forall$.
\end{proposition}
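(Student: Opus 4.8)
The plan is to prove the contrapositive by recycling the countermodel. Assume $\varphi\notin TAUT_\mathcal{V}\forall$; then there is a safe $\mathcal{V}$-model $\mathbf{M}=\lag M,\{m_c\}_{c\in\mathbf{C}},\{r_P\}_{P\in\mathbf{P}}\rog$ with $\Vert\varphi\Vert^\mathcal{V}_\mathbf{M}\neq 1$. Since $V\subseteq W$, each $r_P$ also takes its values in $W$, so the very same $\mathbf{M}$ is a $\mathcal{W}$-model; and since $\mathcal{V}$ is a \emph{subalgebra} of $\mathcal{W}$ (that is, $0,1\in V$ and $V$ is closed under $*,\Rightarrow,\min,\max$, so on $V$ these operations of $\mathcal{W}$ restrict to those of $\mathcal{V}$), one expects $\mathbf{M}$ to compute the same truth values whether read over $\mathcal{V}$ or over $\mathcal{W}$. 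Granting this, $\Vert\varphi\Vert^\mathcal{W}_\mathbf{M}=\Vert\varphi\Vert^\mathcal{V}_\mathbf{M}\neq 1$, hence $\varphi\notin TAUT_\mathcal{W}\forall$, which is the claim.

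The substance is therefore an auxiliary lemma, proved by induction on the complexity of a formula $\psi$: for every evaluation $v$, $\mathbf{M}$ is $\mathcal{W}$-safe for $\psi$ and $\Vert\psi\Vert^\mathcal{W}_{\mathbf{M},v}=\Vert\psi\Vert^\mathcal{V}_{\mathbf{M},v}$. The atomic case is immediate, and the cases of $\&,\land,\to,\bot,\top$ follow from the induction hypothesis together with the fact that the operations of $\mathcal{W}$ restrict on $V$ to those of $\mathcal{V}$. The two quantifier cases are the only ones needing an argument: for $\psi=(\forall x)\chi$ (and dually for $\exists$), the induction hypothesis gives that $S=\{\Vert\chi\Vert^\mathcal{V}_{\mathbf{M},v'}:\ v'\equiv_x v\}$ is a subset of $V$ whose infimum in $\mathcal{V}$ exists, because $\mathbf{M}$ is $\mathcal{V}$-safe; one must then check that this element is also the infimum of $S$ computed in $\mathcal{W}$, which both yields $\mathcal{W}$-safety for $\psi$ and the equality of the two values.

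The main obstacle is exactly this last step: an infimum (or supremum) witnessed inside the subalgebra $\mathcal{V}$ need not, in general, be preserved when one passes to the larger NM-chain $\mathcal{W}$. Here one exploits the order structure of the chains involved: their accumulation points are among $0,\tfrac12,1$, and a short analysis shows that $\mathcal{V}$-safety already forces the infima and suprema defining the quantifiers to coincide with the genuine ones computed in $[0,1]$ (for instance, the elements of ${NM'}_\infty$ above $\tfrac12$ decrease to $\tfrac12$, so in a ${NM'}^-_\infty$-safe model no subformula can have value set accumulating down to the missing point $\tfrac12$; and similarly for $NM_\infty^-\subseteq NM_\infty$), whence these values land in $V\cap W$ and all three readings agree. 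I would isolate this order-theoretic fact as a small preliminary lemma and then plug it into the induction; after that the argument is bookkeeping. It is also worth recording that when $\mathcal{W}=[0,1]_\text{NM}$ the statement is already a particular case of \Cref{teo:incstd}, so the genuinely new content concerns two subalgebras both properly below $[0,1]_\text{NM}$.
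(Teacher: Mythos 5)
Your strategy is the same as the paper's: reuse the $\mathcal{V}$-countermodel over $\mathcal{W}$, the entire content being that the inclusion of $V$ into $W$ is a complete embedding. The paper's proof asserts exactly this and dismisses the verification as ``an easy check''; you have correctly isolated the one step that is not routine, namely that an infimum or supremum witnessed in $\mathcal{V}$ survives the passage to $\mathcal{W}$, and on that point your write-up is more careful than the paper's.

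The gap is that your justification of this step does not prove the proposition as stated. You argue only for ``the chains involved'', using that their accumulation points lie in $\{0,\tfrac12,1\}$; but the proposition quantifies over \emph{arbitrary} subalgebras of $[0,1]_\text{NM}$, i.e.\ arbitrary subsets of $[0,1]$ containing $0,1$ and closed under $x\mapsto 1-x$, and for such $V$ your claim that $\mathcal{V}$-safety forces the quantifier infima to coincide with the genuine infima in $[0,1]$ is false. Take $V=\{0,1,0.4,0.6\}\cup\{0.8+\tfrac{1}{10n}:n\in\mathbb{N}^+\}\cup\{0.2-\tfrac{1}{10n}:n\in\mathbb{N}^+\}$ and $S=\{0.8+\tfrac{1}{10n}:n\in\mathbb{N}^+\}$. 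Then $\inf_V S=0.6$ (the set $V\cap[0,0.8]$ has maximum $0.6$), so a model whose quantified subformula has value set $S$ is $\mathcal{V}$-safe, yet the infimum of $S$ in $[0,1]$ is $0.8$. Enlarging to $W=V\cup\{0.3,0.7\}$ gives $\inf_W S=0.7\neq 0.6$, so the inclusion is not a complete embedding and the same model computes different truth values over the two chains; enlarging instead by a sequence increasing to $0.8$ (and its negations) yields a $W$ in which $\inf_W S$ does not exist at all, so the $\mathcal{V}$-safe countermodel is not even $\mathcal{W}$-safe and cannot be reused. Consequently your argument (and, in fairness, the paper's one-liner, which makes the same unrestricted assertion) establishes the proposition only for the pairs actually used later: complete subalgebras, where topological closedness guarantees that every $\inf_V$ is the true infimum, together with the ad hoc pairs such as ${NM'}^-_\infty\subseteq{NM'}_\infty$ and $NM^-_\infty\subseteq NM_\infty$ that you handle by the ``safety forces minima'' observation. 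For arbitrary subalgebras a genuinely different argument (or a restriction of the hypothesis) is needed.
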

\begin{proof}
Let $\phi$ be the identity mapping over $W$, restricted to $V$: from the way in which the operations of an NM-chain are defined, an easy check shows that $\phi$ is a complete embedding (i.e. it preserves all $\inf$ and $\sup$) from $\mathcal{V}$ to $\mathcal{W}$. From this fact, if $\Vert\varphi\Vert_{\mathbf{M},v}^\mathcal{V}=\alpha<1$, then we can easily construct a model $\mathbf{M}'$ such that $\Vert\varphi\Vert_{\mathbf{M}',v}^\mathcal{W}=\alpha$.
\end{proof}
Now we analyze the differences between the (first-order) tautologies of $[0,1]_\text{NM}$ and those of the other four infinite chains that we have introduced.
\begin{theorem}\label{teo:nmopen}
\begin{enumerate}
\item $TAUT_{NM_\infty}\forall\subset TAUT_{NM^-_\infty}\forall$, $TAUT_{NM'_\infty}\forall\subset TAUT_{NM{'}^-_\infty}\forall$.
\item $TAUT_{[0,1]_\text{NM}}\forall\subset TAUT_{NM_\infty}\forall$, $TAUT_{[0,1]_\text{NM}}\forall\subset TAUT_{NM{'}^-_\infty}\forall$ and $TAUT_{NM_\infty}\forall\neq TAUT_{NM'_\infty}\forall$. 
\end{enumerate}
\end{theorem}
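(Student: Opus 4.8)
The plan is to obtain every non-strict inclusion for free from the monotonicity results already available, and then, for each claimed strictness or inequality, to exhibit a single separating first-order sentence. For the inclusions: $NM_\infty^-\subseteq NM_\infty$ and ${NM'}_\infty^-\subseteq{NM'}_\infty$ as subsets of $[0,1]$ closed under $n(x)=1-x$, so \Cref{prop:inc} yields the two inclusions in part~(1), while $TAUT_{[0,1]_\text{NM}}\forall\subseteq TAUT_{NM_\infty}\forall$ and $TAUT_{[0,1]_\text{NM}}\forall\subseteq TAUT_{{NM'}_\infty^-}\forall$ are instances of \Cref{teo:incstd}. The remaining work splits into two kinds of separation: those telling a chain with the negation fixpoint apart from one without it, and two ``G\"{o}del-type'' separations in which both chains involved contain the fixpoint.

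For the fixpoint-type separations I would use $\Phi\df(\forall x)BP(P(x))$, with $BP(x)$ the formula appearing in \Cref{teo:fixpointeq}(2) and $P$ a fresh unary predicate. Since $NM_\infty^-$ and ${NM'}_\infty^-$ are nontrivial NM-chains without the negation fixpoint, \Cref{teo:fixpointeq}(2) says they satisfy $BP(x)$, so every instance of $BP(P(x))$ takes value $1$ and $\Phi\in TAUT_{NM_\infty^-}\forall\cap TAUT_{{NM'}_\infty^-}\forall$. Conversely, a short direct computation (using $n(f)=f$, $f*f=0$, $\neg 0=1$, $1*1=1$) gives $BP(f)=0$ in every NM-chain with fixpoint $f$; hence in any model over $NM_\infty$, ${NM'}_\infty$, or $[0,1]_\text{NM}$ in which $\|P\|$ takes the value $f$ somewhere one has $\|\Phi\|<1$, so $\Phi$ lies in none of $TAUT_{NM_\infty}\forall$, $TAUT_{{NM'}_\infty}\forall$, $TAUT_{[0,1]_\text{NM}}\forall$. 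This makes both inclusions of part~(1) strict and also gives $TAUT_{[0,1]_\text{NM}}\forall\subsetneq TAUT_{{NM'}_\infty^-}\forall$.

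For the two G\"{o}del-type separations the naive idea of translating a G\"{o}del separating formula fails, because $\neg\neg$ collapses to the identity in NM; instead I would take $E\df(\exists x)(P(x)\to(\forall y)P(y))$, the sentence asserting that the infimum of the values of $P$ is attained, which is the NM counterpart of a sentence distinguishing, in first-order G\"{o}del logic, the well-ordered chain $G_\uparrow$ from both $[0,1]_\text{G}$ and $G_\downarrow$. Over a safe model, with $i\df\inf_y\|P(y)\|$, one checks $\|E\|=1$ if $i$ is attained and $\|E\|=\max(n(i),i)$ otherwise; hence $E$ is valid in an NM-chain exactly when every non-attained infimum of a subset equals $0$ or $1$. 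The crucial structural point, read off the rotation description of \Cref{rem:nmcomp}, is that in $NM_\infty$ (the rotation of the well-order $G_\uparrow$) the positive cone is well-ordered and the negative cone is reverse-well-ordered above its bottom, so $0$ is the only non-attained infimum; therefore $E\in TAUT_{NM_\infty}\forall$. By contrast, in $[0,1]_\text{NM}$ infima may fail to be attained at any value (e.g.\ a sequence decreasing to $\tfrac14$), and in ${NM'}_\infty$ the positive cone is a strictly decreasing sequence whose infimum is the fixpoint; feeding such a ``bad'' infimum $i\notin\{0,1\}$ to $E$ gives $\|E\|=\max(n(i),i)<1$. Thus $E\in TAUT_{NM_\infty}\forall\setminus\bigl(TAUT_{[0,1]_\text{NM}}\forall\cup TAUT_{{NM'}_\infty}\forall\bigr)$, which delivers $TAUT_{[0,1]_\text{NM}}\forall\subsetneq TAUT_{NM_\infty}\forall$ and $TAUT_{NM_\infty}\forall\neq TAUT_{{NM'}_\infty}\forall$.

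The main obstacle is this last step: one must hit on an NM-sentence that survives the involutive negation, and then carry out the order-theoretic bookkeeping identifying exactly which infima can fail to be attained in each of the three chains. This is precisely where the choice of $NM_\infty$ and ${NM'}_\infty$ as connected rotations of $G_\uparrow$ and $G_\downarrow$ pays off, and where the residuum identity $a\Rightarrow 0=n(a)$ is what keeps $E$ valid at the troublesome value $i=0$.
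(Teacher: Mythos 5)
Your proposal is correct, and it proves everything claimed; the difference from the paper lies in the choice of separating sentences for part~(2). For part~(1) and all the non-strict inclusions you do exactly what the paper does (\Cref{prop:inc}, \Cref{teo:incstd}, and \Cref{teo:fixpointeq}(2) via a $BP$-instance --- the paper uses $BP$ on a nullary predicate rather than your quantified $(\forall x)BP(P(x))$, but that is immaterial). For part~(2) the paper uses a single quantifier-shift sentence, $(\forall x)(\varphi(x)\&\nu)\leftrightarrow((\forall x)\varphi(x)\&\nu)$, to handle all three claims at once: it holds in $NM_\infty$ because the only non-attained infimum there is $0$ (where $0*y=0=\inf_w(w*y)$), it holds in ${NM'}^-_\infty$ for the rather different reason that a set without minimum has \emph{no} infimum in that incomplete chain (so the model is not safe and the case is vacuous), and it fails in $NM'_\infty$ (hence in $[0,1]_\text{NM}$ by \Cref{teo:incstd}) on the positive cone whose infimum is the fixpoint. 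You instead split the work: the $BP$-sentence settles $TAUT_{[0,1]_\text{NM}}\forall\subsetneq TAUT_{{NM'}^-_\infty}\forall$ by the fixpoint criterion alone, and $C_\uparrow=(\exists x)(P(x)\to(\forall y)P(y))$ settles the two remaining separations via the order-type analysis (your computation $\Vert E\Vert=\max(n(i),i)$ for a non-attained infimum $i$ is right, and your identification of which infima can fail to be attained in $NM_\infty$, $NM'_\infty$ and $[0,1]_\text{NM}$ is correct since all three chains are complete, so safety is automatic). In effect you are anticipating the paper's very next theorem on \ref{form:cu} and \ref{form:cd}; what the paper's choice buys is a single witness formula and a first glimpse of the quantifier-shift laws studied later, while yours buys a cleaner conceptual split between ``fixpoint'' separations and ``order-type'' separations, at the cost of needing two witnesses.
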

\begin{proof}
\begin{enumerate}
\item Immediate from \Cref{prop:inc} and \Cref{teo:fixpointeq}.
\item We show that the formula 
\begin{equation} 
\tag*{(*)}(\forall x)(\varphi(x)\&\nu)\leftrightarrow((\forall x)\varphi(x)\&\nu)
\label{eq:rigcont}
\end{equation}
where $x$ does not occurs freely in $\nu$, is a tautology for $NM_\infty$ and $NM{'}^-_\infty$, but it fails in $NM'_\infty$ (and hence, from \Cref{teo:incstd}, it fails in $[0,1]_\text{NM}$).
 
First of all we show that \ref{eq:rigcont} fails in $NM'_\infty$. Consider the formula $(\forall x)(P(x)\&p)\leftrightarrow((\forall x)P(x)\&p)$, where $p$ is a predicate of arity zero. Construct a model $\mathbf{M}$ (that is necessarily safe, since $NM'_\infty$ is complete) such that its domain $M$ is $(\frac{1}{2},1]\cap NM'_\infty$, $p$ is interpreted as $\frac{1}{2}$ and $r_P(m)=m$, for each $m\in M$. An easy check shows that $\Vert(\forall x)(P(x)\&p)\leftrightarrow((\forall x)P(x)\&p)\Vert_\mathbf{M}^{NM'_\infty}=\frac{1}{2}$ and hence $NM'_\infty\not\models \ref{eq:rigcont}$.

Now we show that $NM_\infty\models\ref{eq:rigcont}$. We have to check that, for each $W\subseteq NM_\infty$ (observe that NM$_\infty$ is a complete lattice) and $y\in NM_\infty$, it holds that $\inf_{w\in W}(w*y)=\inf(W)*y$. Note that, if $W$ has minimum $m$, then $\inf(W)*y=m*y=\inf_{w\in W}(w*y)$. Suppose then that $W$ has infimum but not minimum: an easy check shows that $\inf(W)=0$. In this last case we have that $\inf(W)*y=0=\inf_{w\in W}(w*1)\geq\inf_{w\in W}(w*y)$.

Finally we analyze $NM{'}^-_\infty$. We have to show that $\inf_{w\in W}\{w*x\}=\inf(W)*x$, for each $W\subseteq NM{'}^-_\infty$ and $x\in NM{'}^-_\infty$, when these $\inf$ exist. If $W$ has a minimum, say $m$, then $\inf_{w\in W}\{w*x\}=m*x=\inf(W)*x$; if $W$ does not have minimum, then it does not have $\inf$ and we are not interested to this case.
\end{enumerate}
\end{proof}
\begin{lemma}\label{lem:ord}
Let $\mathcal{A}$ be an NM-chain: an element $a$ does not have predecessor\footnote{An element $x\in A$ has a predecessor if there is an element $p\in A$ such that $p<x$ and there are no other elements between $p$ and $x$ (i.e. there is no $c\in A$ such that $p<c<x$). The notion of successor is defined dually.} (successor) if and only if $n(a)$ does not have successor (predecessor).
\end{lemma}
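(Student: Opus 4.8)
The plan is to exploit the only structural fact we need about an NM-chain here, namely that the negation $n$ is a \emph{strong negation}: an order-reversing involution on $A$. In particular $n$ is a bijection of $A$ onto itself, it is its own inverse, and for all $x,y\in A$ we have $x<y$ if and only if $n(y)<n(x)$. Thus $n$ is an anti-automorphism of the chain $\lag A,\leq\rog$, and the whole lemma will follow once we check that such a map sends predecessors to successors and vice versa.

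Concretely, I would first prove the auxiliary claim: for $p,a\in A$, the element $p$ is the predecessor of $a$ if and only if $n(p)$ is the successor of $n(a)$. One direction: if $p<a$ and there is no $c$ with $p<c<a$, then $n(a)<n(p)$ by order-reversal, and if there were some $c'$ with $n(a)<c'<n(p)$, then applying $n$ (and using $n(n(x))=x$) would give $p=n(n(p))<n(c')<n(n(a))=a$, contradicting that $a$ has no element strictly below it above $p$. Hence $n(p)$ is the successor of $n(a)$. The converse is the same argument run backwards (or, since $n$ is an involution, it is literally the same statement applied to $n(a)$ and $n(p)$). This is the only place where a small verification is needed, and it is entirely routine because $n$ is a bijection, so "nothing lies strictly between" is preserved in both directions.

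From the claim, $a$ has a predecessor if and only if $n(a)$ has a successor (the witness being the $n$-image of the witness), and by contraposition $a$ has no predecessor if and only if $n(a)$ has no successor — the first half of the lemma. For the second half, apply the first half with $a$ replaced by $n(a)$: since $n(n(a))=a$, we get that $n(a)$ has no predecessor if and only if $a$ has no successor, which is exactly the statement that $a$ has no successor iff $n(a)$ has no predecessor.

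I do not expect any real obstacle: the content is simply that a strong negation is an order anti-isomorphism of the chain onto itself, and order anti-isomorphisms interchange the notions of predecessor and successor. The only thing to be careful about is keeping track of which of the two symmetric biconditionals is being asserted, and noting explicitly that the second one is obtained from the first by the substitution $a\mapsto n(a)$ together with involutivity of $n$.
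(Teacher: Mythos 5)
Your proof is correct and is exactly the argument the paper has in mind: the paper dismisses this lemma with ``Immediate from the properties of the negation,'' and your write-up simply makes explicit that a strong negation is an order anti-isomorphism, hence interchanges predecessors and successors. No gaps.
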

\begin{proof}
Immediate from the properties of the negation.
\end{proof}
Consider now the following formulas:
\begin{align}
\tag*{$C_\uparrow$}&(\exists x)(\varphi(x)\to\forall y \varphi(y))\label{form:cu}\\
\tag*{$C_\downarrow$}&(\exists x)(\exists y \varphi(y)\to \varphi(x)).\label{form:cd}
\end{align}
Their names are due to the fact that, in the context of Gödel logics they are equivalent to ask, respectively, over a Gödel-chain that ``every infimum is a minimum'', and ``every supremum is a maximum'': see \cite{fgod} for details.
\begin{theorem}
The formulas \ref{form:cu} and \ref{form:cd} hold in every NM-chain $\mathcal{A}$ in which every element of $\mathcal{A}\setminus\{0,1\}$ has a predecessor in $\mathcal{A}$. They both fail in any other NM-chain.
\end{theorem}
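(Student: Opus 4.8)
The plan is to compute the truth value of the schema \ref{form:cu} (and, dually, of \ref{form:cd}) in an arbitrary safe $\mathcal{A}$-model, using the explicit description of $*$ and $\Rightarrow$ on an NM-chain recalled in \Cref{subsec:sem}, and to read off exactly which order property of $\mathcal{A}$ is decisive. Fix a safe $\mathcal{A}$-model $\mathbf{M}$ with domain $M$; I suppress $\mathbf{M}$ and the evaluation from the notation and write $\Vert\varphi(m)\Vert$ for the value of $\varphi$ at $m\in M$. Put $S=\{\Vert\varphi(m)\Vert:m\in M\}$ and $i=\inf S$, which exists by safety. If $i\in S$, then $\Vert\varphi(m)\Vert=i$ for some $m$, so $\Vert\varphi(m)\to(\forall y)\varphi(y)\Vert=i\Rightarrow i=1$ and \ref{form:cu} has value $1$. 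If $i\notin S$, every element of $S$ lies strictly above $i$, and then $i$ can have no successor in $\mathcal{A}$: a successor $t$ of $i$ would be a lower bound of $S$ strictly above $i=\inf S$. I also record the elementary fact that $n$ carries every existing infimum to the corresponding supremum and conversely; this makes the two halves of the argument mirror each other.

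\emph{First direction.} Assume every element of $\mathcal{A}\setminus\{0,1\}$ has a predecessor. As $n$ is an involutive bijection of $\mathcal{A}$ that maps $\{0,1\}$ onto itself, \Cref{lem:ord} then gives that every element of $\mathcal{A}\setminus\{0,1\}$ has a successor as well, so an element of $\mathcal{A}$ with no successor must be $0$ or $1$. In the ``$i\notin S$'' case this forces $i\in\{0,1\}$; and $i=1$ is impossible, since it would give $S=\{1\}$ and hence $i\in S$. So $i=0$, every $\Vert\varphi(m)\Vert$ is positive, and the value of \ref{form:cu} is $\sup_{m\in M}(\Vert\varphi(m)\Vert\Rightarrow 0)=\sup_{m\in M}n(\Vert\varphi(m)\Vert)$. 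Applying $n$ to this supremum and using the commutation fact turns it into $\inf_{m\in M}\Vert\varphi(m)\Vert=i=0$, so the supremum itself equals $n(0)=1$. Thus \ref{form:cu} has value $1$ in every safe model, i.e.\ $\mathcal{A}\models\ref{form:cu}$; the symmetric argument — now using directly that an unattained supremum would be an element of $\mathcal{A}$ without predecessor, hence $0$ or $1$, hence $1$ — gives $\mathcal{A}\models\ref{form:cd}$.

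\emph{Converse.} Suppose some $a^{*}\in\mathcal{A}\setminus\{0,1\}$ has no predecessor, and work with the instance $\varphi(x):=P(x)$ for a unary predicate $P$. For \ref{form:cd}, let $T=\{x\in\mathcal{A}:x<a^{*}\}$; this is nonempty (as $a^{*}\neq 0$), satisfies $\sup T=a^{*}$, and does not contain $a^{*}$. The $\mathcal{A}$-model with domain $T$ and $P$ interpreted by the identity is safe, because the only suprema it requires, namely $\sup T=a^{*}$ and $\sup_{t\in T}(a^{*}\Rightarrow t)=\sup_{t\in T}\max(n(a^{*}),t)=\max(n(a^{*}),a^{*})$, are realized in $\mathcal{A}$; and in this model \ref{form:cd} has value $\max(n(a^{*}),a^{*})$, which is $<1$ since $a^{*}\notin\{0,1\}$. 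Hence \ref{form:cd} fails in $\mathcal{A}$. For \ref{form:cu}, \Cref{lem:ord} gives that $b^{*}=n(a^{*})\in\mathcal{A}\setminus\{0,1\}$ has no successor, and the dual model — domain $\{x\in\mathcal{A}:x>b^{*}\}$, $P$ again the identity — is safe for the same reasons and makes \ref{form:cu} take the value $\max(b^{*},a^{*})<1$.

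The part I expect to be the real content, rather than the calculations (which are mechanical once the NM-chain operation tables are in hand), is the bookkeeping around \emph{safety} in the converse: each counterexample model must call only for infima and suprema that genuinely exist in $\mathcal{A}$, and ``$a^{*}$ has no predecessor'' is exactly what secures $\sup\{x<a^{*}\}=a^{*}\in\mathcal{A}$. A secondary subtlety is the case distinction on whether the relevant infimum or supremum is attained, which has to be combined with \Cref{lem:ord} to pin down that an unattained limit sits at $0$ or $1$.
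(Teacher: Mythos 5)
Your proof is correct and follows essentially the same route as the paper's: the same use of \Cref{lem:ord} to transfer between predecessors and successors, the same dichotomy on whether the relevant infimum/supremum is attained (with unattained limits forced to $0$ or $1$ under the hypothesis), and the same counterexample sets $\{x<a^*\}$ and $\{x>n(a^*)\}$ in the converse. The only difference is presentational — you phrase the computation inside explicit models and track safety, where the paper works directly with the sets $W$ — so nothing further is needed.
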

\begin{proof}
Let $\mathcal{B}$ be an NM-chain that has an element $x\in\mathcal{B}\setminus\{0,1\}$ without predecessor in $\mathcal{B}$.

Consider the set $W=\{w\in\mathcal{B}:\ w<x\}$: direct inspection shows that $\sup_{w\in W}\{\sup(W)\Rightarrow w\}=\sup_{w\in W}\{x\Rightarrow w\}=\sup_{w\in W}\{\max(n(x),w)\}<1$. This shows that $\mathcal{B}\not\models$\ref{form:cd}.

From \Cref{lem:ord} we know that $n(x)$ does not have successor. Construct the set $W=\{w\in\mathcal{B}:\ w>n(x)\}$: direct inspection shows that $\sup_{w\in W}\{w\Rightarrow\inf(W)\}=\sup_{w\in W}\{w\Rightarrow n(x)\}=\sup_{w\in W}\{\max(n(w),n(x))\}<1$. This shows that $\mathcal{B}\not\models$ \ref{form:cu}.

\smallskip
Let $\mathcal{A}$ be any NM-chain in which every element of $\mathcal{A}\setminus\{0,1\}$ has a predecessor in $\mathcal{A}$: it follows that every element of $\mathcal{A}\setminus\{0,1\}$ has predecessor and successor in $\mathcal{A}$. We have to check that $\sup_{w\in W}\{w\Rightarrow\inf(W)\}=1$ and $\sup_{w\in W}\{\sup(W)\Rightarrow w\}=1$, for every $W$ in which these $\inf$ and $\sup$ exist. If $W$ has minimum $m$, then $\sup_{w\in W}\{w\Rightarrow\inf(W)\}=m\Rightarrow m=1$; if $W$ has maximum $n$, then $\sup_{w\in W}\{\sup(W)\Rightarrow w\}=n\Rightarrow n=1$. If $W$ has infimum, but not minimum, then $\inf(W)=0$ and $\sup_{w\in W}\{w\Rightarrow\inf(W)\}=\sup_{w\in W}\{n(w)\}=1$. Finally, if $W$ has supremum, but not maximum, then $\sup(W)=1$ and $\sup_{w\in W}\{\sup(W)\Rightarrow w\}=\sup_{w\in W}\{1\Rightarrow w\}=1$.
\end{proof}
\begin{corollary}
\begin{itemize}
\item[]
\item \ref{form:cd} and \ref{form:cu} belong to $TAUT_{NM_\infty}\forall$, $TAUT_{NM^-_\infty}\forall$, $TAUT_{NM{'}^-_\infty}\forall$ and $TAUT_{NM_n}\forall$, for every $1<n<\omega$.
\item \ref{form:cd} and \ref{form:cu} fail in $[0,1]_\text{NM}$ and $NM'_\infty$.
\end{itemize}
\end{corollary}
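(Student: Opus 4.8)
The plan is to read both items off the preceding theorem: by that theorem, whether \ref{form:cu} and \ref{form:cd} are tautologies of an NM-chain $\mathcal{A}$ depends only on whether every element of $\mathcal{A}\setminus\{0,1\}$ has a predecessor, so the whole statement reduces to an order-theoretic inspection of each chain on the list (which, through \Cref{lem:ord}, one could equivalently carry out in terms of successors).

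For the first item I would check that $NM_\infty$, $NM^-_\infty$, $NM{'}^-_\infty$ and every $NM_n$ meet the hypothesis ``each element other than $0$ and $1$ has a predecessor''. This is immediate for the finite chains $NM_n$. For $NM_\infty=\{0\}\cup\{\tfrac1n:n\ge3\}\cup\{\tfrac12\}\cup\{1-\tfrac1n:n\ge3\}\cup\{1\}$ one just exhibits the predecessors: that of $\tfrac1n$ is $\tfrac1{n+1}$, that of $\tfrac12$ is $\tfrac13$, that of $\tfrac23$ is $\tfrac12$, and that of $1-\tfrac1n$ is $1-\tfrac1{n-1}$ for $n\ge4$; so the only elements with no predecessor are the minimum $0$ and the element $1$ (a limit from below of the positive elements). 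Deleting $\tfrac12$ yields $NM^-_\infty$, where the same computation applies, the predecessor of $\tfrac23$ now being $\tfrac13$. For $NM{'}^-_\infty$, whose negative elements increase to $\tfrac12$ and whose positive elements decrease to $\tfrac12$, with $\tfrac12$ itself not in the chain, each non-minimal negative element has its obvious predecessor, and $\tfrac12+\tfrac1{2(n+1)}$ is the predecessor of the positive element $\tfrac12+\tfrac1{2n}$ (it lies in the chain and dominates every negative element); so once again $0$ is the only element without a predecessor. In all four cases the preceding theorem then gives that \ref{form:cu} and \ref{form:cd} belong to $TAUT_\mathcal{A}\forall$.

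For the second item I would instead, in each of $[0,1]_\text{NM}$ and $NM'_\infty$, produce an element of $\mathcal{A}\setminus\{0,1\}$ with no predecessor and invoke the ``fails in any other NM-chain'' clause. For $[0,1]_\text{NM}$ this is automatic from density: no element of $(0,1)$ has a predecessor, e.g. $\tfrac12$. For $NM'_\infty$ the fixpoint $\tfrac12$ does the job: it is the supremum, attained in the chain, of the strictly increasing sequence of negative elements, hence has no predecessor, and it genuinely lies in $NM'_\infty\setminus\{0,1\}$; thus \ref{form:cu} and \ref{form:cd} fail there.

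I do not expect a genuine obstacle: given the preceding theorem the argument is routine. The one point deserving explicit comment is the contrast between $NM'_\infty$ and $NM{'}^-_\infty$, which differ only by the fixpoint $\tfrac12$ yet fall on opposite sides of the dichotomy — in $NM'_\infty$ that fixpoint is exactly the limit element without a predecessor, whereas deleting it makes $NM{'}^-_\infty$ a chain in which every non-extremal element is the genuine successor of the one just below it.
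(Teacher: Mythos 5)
Your proposal is correct and is exactly the argument the paper intends: the corollary is read off the preceding theorem by checking, for each listed chain, whether every element other than $0$ and $1$ has a predecessor, and your order-theoretic verifications (including the key contrast that $\tfrac12$ is a limit of negative elements inside $NM'_\infty$ but absent from $NM{'}^-_\infty$) are all accurate.
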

\begin{remark}
Continuing with the analogies with Gödel logic, it can be showed (see \cite{fgod} and \cite{tgod}) that \ref{form:cd} and \ref{form:cu} are tautologies in $G_\uparrow$ and in every finite Gödel chain, whilst $G_\downarrow\not\models$\ref{form:cu} and $G_\downarrow\models$\ref{form:cd}. Both the formulas fail in $G\forall$ (see \cite{tgod}).
\end{remark}
We prosecute our analysis of first-order tautologies with the following
\begin{theorem}\label{teo:nminf}
Let $\varphi$ be an NM$\forall$ formula. For every integer $n>1$ and every \emph{even} integer $m>1$ it holds that
\begin{itemize}
\item If $NM_n\not\models\varphi$, then $NM_\infty\not\models\varphi$ and $NM'_\infty\not\models\varphi$.
\item If $NM_m\not\models\varphi$, then $NM^-_\infty\not\models\varphi$ and $NM{'}^-_\infty\not\models\varphi$.
\end{itemize}
\end{theorem}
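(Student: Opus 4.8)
The plan is to deduce the entire statement from \Cref{prop:inc}, by showing that, up to isomorphism, $NM_n$ is a subalgebra of both $NM_\infty$ and ${NM'}_\infty$ for every $n>1$, and that $NM_m$ is a subalgebra of both $NM^-_\infty$ and ${NM'}^-_\infty$ for every even $m>1$. Suppose, say, that a subalgebra $\mathcal S$ of $NM_\infty$ has universe $S$ with $\mathcal S\simeq NM_n$. As $\mathcal S$ is finite, its lattice reduct is a complete lattice, so all $\mathcal S$-models are safe and (tautologicity being invariant under isomorphism) $TAUT_{\mathcal S}\forall=TAUT_{NM_n}\forall$. Since $S\subseteq NM_\infty$ and both are universes of subalgebras of $[0,1]_\text{NM}$, \Cref{prop:inc} yields $TAUT_{NM_\infty}\forall\subseteq TAUT_{NM_n}\forall$, and the contrapositive is exactly ``$NM_n\not\models\varphi$ implies $NM_\infty\not\models\varphi$''. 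The other three implications are obtained in the same manner.

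So the real content is a purely algebraic construction. First I would record an elementary fact: in any NM-chain $\mathcal A$, a subset $S\subseteq A$ is the universe of a subalgebra if and only if $0,1\in S$ and $S$ is closed under the negation $n$. The forward implication is obvious; conversely, recall from \Cref{subsec:sem} that in every NM-chain $x*y\in\{0,\min(x,y)\}$ and $x\Rightarrow y\in\{1,\max(n(x),y)\}$, so a set containing $0,1$ and closed under $n$ --- hence, being a chain, closed under $\min$ and $\max$ --- is automatically closed under $*$ and $\Rightarrow$. Combined with the result of \cite{gis} that finite NM-chains of equal cardinality are isomorphic, this shows that any finite such $S$ with $|S|=c$ gives a subalgebra isomorphic to $NM_c$.

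It then remains to exhibit, for each prescribed cardinality, a suitable $S$. For $NM_\infty$, writing the target cardinality as $2k$ or $2k+1$, take $S=\{0,1\}\cup\bigcup_{i=1}^{k-1}\{\tfrac1{i+2},\,1-\tfrac1{i+2}\}$ in the even case and $S=\{0,\tfrac12,1\}\cup\bigcup_{i=1}^{k-1}\{\tfrac1{i+2},\,1-\tfrac1{i+2}\}$ in the odd case; in either case $S\subseteq NM_\infty$, $0,1\in S$, $S$ is closed under $x\mapsto 1-x$, and $S$ has the desired cardinality. For ${NM'}_\infty$ one uses instead the pairs $\{\tfrac12-\tfrac1{2(i+1)},\,\tfrac12+\tfrac1{2(i+1)}\}$ (adjoining $\tfrac12$ in the odd case). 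For $NM^-_\infty$ and ${NM'}^-_\infty$ one simply reuses the corresponding even-case constructions: since these never use the element $\tfrac12$, the resulting $S$ is contained in $NM^-_\infty=NM_\infty\setminus\{\tfrac12\}$ (resp. in ${NM'}^-_\infty={NM'}_\infty\setminus\{\tfrac12\}$) and has the required even cardinality $m$. Feeding these $S$ into the scheme of the first paragraph proves all four implications.

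The only genuine subtlety --- and the reason the last two implications are stated only for even $m$ --- is that $NM^-_\infty$ and ${NM'}^-_\infty$ contain no negation fixpoint (by \Cref{teo:fixpointeq}, or directly by construction), hence no subalgebra of theirs can contain one; since $NM_m$ has a negation fixpoint precisely when $m$ is odd, an odd $NM_m$ admits no such embedding, and in fact the implication fails there: taking for $\varphi$ the formula \ref{eq:fix} with a $0$-ary predicate replacing the variable $x$, one gets $NM_m\not\models\varphi$ for odd $m$ whereas $NM^-_\infty\models\varphi$. Everything else is routine checking.
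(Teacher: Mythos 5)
Your proposal is correct and is essentially the paper's own argument: the paper likewise reduces all four implications to exhibiting an embedding of $NM_n$ (resp.\ $NM_m$) into each infinite chain that preserves all infima and suprema, the preservation being automatic because the embedded chain is finite. You realize these embeddings by naming their images as finite negation-closed subsets containing $0,1$ and then invoking the uniqueness of finite NM-chains of a given cardinality together with \Cref{prop:inc}, whereas the paper writes the maps out element by element; the mathematical content (and indeed the images of the maps) is the same.
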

\begin{proof}
It is enough to show that $NM_n\hookrightarrow NM_\infty$, $NM_n\hookrightarrow NM'_\infty$, $NM_m\hookrightarrow NM^-_\infty$, $NM_m\hookrightarrow NM{'}^-_\infty$ preserving all $\inf$ and $\sup$.

We begin with the case of $NM_\infty$.

Let $0=c_1<c_2<\dots<c_n=1$ be the elements of NM$_n$: consider a map $\phi$ such that
\begin{itemize}
\item $\phi(c_1)=0$ and $\phi(c_n)=1$.
\item If $NM_n$ has a fixpoint $f$, then $\phi(f)=\frac{1}{2}$.
\item Let $c_k$ be the least positive element: we set $\phi(c_j)=1-\frac{1}{3+(j-k)}$ for every $c_n>c_j\geq c_k$.
\item Let $c_h$ be the greatest negative element: we set $\phi(c_i)=\frac{1}{3+(h-i)}$ for every $c_1<c_i\leq c_h$.
\end{itemize}
A direct inspection shows that $\phi$ is an embedding from the two chains. Moreover, since $NM_n$ is finite, then for each $W\subseteq NM_n$, $\phi(\inf(W))=\phi(\min(W))=\min(\phi(W))$; analogously for $\sup$.

Concerning the case of $NM'_\infty$ we have only to modify the map $\phi$ and the proof proceeds analogously to the previous case.

Let $0=c_1<c_2<\dots<c_n=1$ be the elements of NM$_n$: consider a map $\phi^\prime$ such that
\begin{itemize}
\item $\phi^\prime(c_1)=0$ and $\phi^\prime(c_n)=1$.
\item If $NM_n$ has a fixpoint $f$, then $\phi^\prime(f)=\frac{1}{2}$.
\item Let $c_k$ be the greatest positive element of $NM'_\infty\setminus\{1\}$: we set $\phi^\prime(c_j)=\frac{1}{2}+\frac{1}{2(2+k-j)}$ for every $c_n>c_k\geq c_j$.
\item Let $c_h$ be the least negative element of $NM'_\infty\setminus\{0\}$: we set $\phi^\prime(c_i)=\frac{1}{2}-\frac{1}{2(2+i-h)}$ for every $c_1<c_h\leq c_i$.
\end{itemize}
Finally the proofs for $NM^-_\infty$ and $NM{'}^-_\infty$ are identical to the previous ones, except for the absence of the negation fixpoint.
\end{proof}
\begin{corollary}\label{cor:inc}
For every integer $n>1$ we have $TAUT_{[0,1]_\text{NM}}\forall\subset TAUT_{NM_n}\forall$, $TAUT_{NM_\infty}\forall\subset TAUT_{NM_n}\forall$,  $TAUT_{NM'_\infty}\forall\subset TAUT_{NM_n}\forall$. Moreover, if $n$ is even, then $TAUT_{NM^-_\infty}\forall\subset TAUT_{NM_n}\forall$, $TAUT_{NM{'}^-_\infty}\forall\subset TAUT_{NM_n}\forall$.
\end{corollary}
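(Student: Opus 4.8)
The plan is to separate the two assertions hidden in each ``$\subset$'', namely the inclusion $\subseteq$ and its properness, and to treat $[0,1]_\text{NM}$, $NM_\infty$, $NM'_\infty$, $NM^-_\infty$, $NM{'}^-_\infty$ uniformly.

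For the inclusions there is essentially no work. The inclusion $TAUT_{[0,1]_\text{NM}}\forall\subseteq TAUT_{NM_n}\forall$ is a special case of \Cref{teo:incstd} (equivalently, since $NM_n$ is a subalgebra of $[0,1]_\text{NM}$, of \Cref{prop:inc}). For the remaining chains I would simply take the contrapositives of the two items of \Cref{teo:nminf}: ``$NM_n\not\models\varphi$ implies $NM_\infty\not\models\varphi$ and $NM'_\infty\not\models\varphi$'' yields $TAUT_{NM_\infty}\forall\subseteq TAUT_{NM_n}\forall$ and $TAUT_{NM'_\infty}\forall\subseteq TAUT_{NM_n}\forall$ for every $n>1$, and ``$NM_m\not\models\varphi$ implies $NM^-_\infty\not\models\varphi$ and $NM{'}^-_\infty\not\models\varphi$'' (for even $m$) yields the two further inclusions when $n$ is even. (One could instead re-run the argument of \Cref{prop:inc} using the complete embeddings built in the proof of \Cref{teo:nminf}, but there is no need.)

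For properness I would exhibit a single formula separating $NM_n$ from all five large chains at once. The natural choice is $S_n(x_0,\dots,x_n)$ from \Cref{teo:fixpointeq}, read as a first-order sentence by interpreting $x_0,\dots,x_n$ as $0$-ary predicate symbols; note that a propositional tautology of an NM-chain $\mathcal{A}$ is automatically in $TAUT_\mathcal{A}\forall$, and a propositional non-tautology of $\mathcal{A}$ is automatically not in $TAUT_\mathcal{A}\forall$. By \Cref{teo:fixpointeq}(1), $NM_n\models S_n$ since $NM_n$ has $n<2n+2$ elements, whereas each of $[0,1]_\text{NM}$, $NM_\infty$, $NM'_\infty$, $NM^-_\infty$, $NM{'}^-_\infty$ is infinite, hence has at least $2n+2$ elements, so none of them validates $S_n$. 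Thus $S_n\in TAUT_{NM_n}\forall$ but $S_n$ lies in none of the five larger tautology sets, which upgrades every inclusion above to a strict one.

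I do not expect a real obstacle here: all the content is already in \Cref{teo:nminf} and \Cref{teo:fixpointeq}, and what remains is bookkeeping together with the choice of $S_n$ as separating formula. The only points needing a little care are (i) that it is legitimate to quote a propositional schema as a first-order separating sentence — this is exactly what the $0$-ary-predicate reading handles — and (ii) that the cardinality bound of \Cref{teo:fixpointeq}(1) is compared against the right chains: taking $S_n$ itself always works, since $n<2n+2$ for every $n>1$, and the five chains on the other side of the inclusion are infinite.
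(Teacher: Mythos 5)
Your proposal is correct and follows the paper's own decomposition exactly: the non-strict inclusions are obtained from \Cref{teo:incstd} together with the contrapositive of \Cref{teo:nminf} (which is precisely how the paper gets them), and strictness is obtained by exhibiting a quantifier-free sentence in $0$-ary predicates that is a tautology of $NM_n$ but of none of the five infinite chains. The only divergence is the choice of separating witness: the paper uses $\bigvee_{0<i<n}(p_i\to p_{i+1})$ and appeals to ``direct inspection'', whereas you use $S_n(x_0,\dots,x_n)$ and appeal to the stated equivalence of \Cref{teo:fixpointeq}(1) ($NM_n$ has $n<2n+2$ elements, the infinite chains do not). Both routes are legitimate, but yours is arguably the safer one: it reduces the verification to a quoted theorem, and it sidesteps an indexing subtlety in the paper's witness --- as written, $\bigvee_{0<i<n}(p_i\to p_{i+1})$ involves $n$ variables $p_1,\dots,p_n$ and therefore admits a falsifying assignment already in the $n$-element chain $NM_n$ (take the elements in strictly decreasing order), so the paper's formula really needs one more disjunct, $\bigvee_{0<i\le n}(p_i\to p_{i+1})$, to do its job. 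Your remarks (i) and (ii) about reading propositional schemata as first-order sentences via $0$-ary predicates are exactly the right bookkeeping and are handled correctly.
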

\begin{proof}
From \Cref{teo:incstd,teo:nminf} we have the non-strict inclusions. To prove the strictness, direct inspection shows that the formula $\bigvee_{0<i<n}(p_i\to p_{i+1})$ (where each $p_i$ is a predicate of arity zero) is a first-order tautology of NM$_n$, but it fails in every infinite NM-chain.
\end{proof}
Differently from the results of \cite{fgod} for G$_n$, it cannot be showed that $TAUT_{NM_{n+1}}\forall\subset TAUT_{NM_n}\forall$. Indeed, if NM$_n$ has negation fixpoint, then (see \Cref{teo:fixpointeq}) $NM_n\not\models\neg(\neg p^2)^2\leftrightarrow(\neg(\neg p)^2)^2$, where $p$ is a predicate of arity zero. However $NM_{n+1}\models\neg(\neg p^2)^2\leftrightarrow(\neg(\neg p)^2)^2$.

Note that if both the chains are with (without) negation fixpoint, then the previous problem disappear; note also that NM$_n$ has negation fixpoint if and only if $n$ is odd.
\paragraph*{}
Hence, we have the following
\begin{theorem}
For each pair of integers $m,n$ such that $1<m<n$, if $m,n$ are both even (odd), then $TAUT_{NM_{n}}\forall\subset TAUT_{NM_m}\forall$.
\end{theorem}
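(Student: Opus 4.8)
The plan is to prove the two halves of the asserted strict inclusion in turn: the non-strict inclusion $TAUT_{NM_{n}}\forall\subseteq TAUT_{NM_{m}}\forall$, and then the existence of a sentence in $TAUT_{NM_{m}}\forall\setminus TAUT_{NM_{n}}\forall$.

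For the inclusion I would appeal to \Cref{prop:inc}. Because $m\le n$ and $m,n$ have the same parity, we may realize $NM_{m}$ and $NM_{n}$ as subalgebras of $[0,1]_\text{NM}$ whose universes $V\subseteq W$ are nested: keep $0$, $1$ and (when the common parity is odd) the midpoint $\tfrac12$, and then pick as many of the topmost positive points of $NM_{n}$ below $1$ as $NM_{m}$ has, together with their mirror images below $\tfrac12$; this is possible since $NM_{n}$ has at least as many positive elements as $NM_{m}$. Since any two finite NM-chains of equal cardinality are isomorphic, this choice of representatives is harmless, and \Cref{prop:inc} gives $TAUT_{NM_{n}}\forall\subseteq TAUT_{NM_{m}}\forall$. (Alternatively one may exhibit directly an order- and negation-preserving injection $NM_{m}\hookrightarrow NM_{n}$; for finite chains this is automatically a complete embedding, and since $*$ and $\Rightarrow$ in any NM-chain are determined by the order and the negation it is automatically an NM-chain embedding, so a countermodel for $\varphi$ over $NM_{m}$ transports to one over $NM_{n}$, exactly as in the proof of \Cref{teo:nminf}.)

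For strictness I would use the formulas $S_{\ell}(x_{0},\dots,x_{\ell})$ from \Cref{teo:fixpointeq}(1). Reading each propositional variable $x_{i}$ as a predicate of arity zero turns $S_{\ell}$ into a first-order sentence whose value in any $\mathbf{A}$-model is simply its propositional value, so $NM_{k}\models S_{\ell}$ if and only if the NM-chain $NM_{k}$ satisfies the equation $S_{\ell}$, that is, if and only if $k<2\ell+2$. Take $\ell=m/2$ when $m$ is even and $\ell=(m-1)/2$ when $m$ is odd; in either case $m<2\ell+2$, so $NM_{m}\models S_{\ell}$. On the other hand $m<n$ with $m\equiv n\pmod 2$ forces $n\ge m+2$, and then $2\ell+2\le n$ (indeed $2\ell+2=m+2\le n$ in the even case and $2\ell+2=m+1<n$ in the odd case), so $NM_{n}\not\models S_{\ell}$. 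Hence $S_{\ell}\in TAUT_{NM_{m}}\forall\setminus TAUT_{NM_{n}}\forall$ and the inclusion is strict.

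There is no real conceptual obstacle here; the one point demanding care is keeping the parity bookkeeping straight. In the first part the embedding must match the fixpoint of $NM_{m}$ (when $m$ is odd) to that of $NM_{n}$ and leave enough positive and negative elements on the target side, and in the second part the index $\ell$ must be chosen so that $m<2\ell+2\le n$ holds simultaneously, which relies precisely on $m$ and $n$ having the same parity so that $n\ge m+2$. This is also why no analogous strict inclusion can be expected when the parities of $m$ and $n$ differ, as already observed in the remarks preceding the statement.
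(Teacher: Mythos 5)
Your proof is correct and follows the same overall route as the paper's: an order- and negation-preserving (hence, for finite chains, automatically complete and operation-preserving) embedding $NM_m\hookrightarrow NM_n$ gives the non-strict inclusion, and a propositional sentence read with $0$-ary predicates witnesses strictness. The only genuine difference is the witness: you take $S_\ell$ from \Cref{teo:fixpointeq} with $\ell$ tuned so that $m<2\ell+2\le n$ (your parity arithmetic checks out in both cases), whereas the paper uses $\bigvee_{0<i<m}(p_i\to p_{i+1})$. Your choice actually buys something: the paper's formula, as written with the $m$ variables $p_1,\dots,p_m$, is already falsifiable in the $m$-element chain by assigning $m$ strictly decreasing values, so it needs an index shift to do its job, while your appeal to \Cref{teo:fixpointeq} delegates the cardinality counting to a cited result and cannot go wrong.
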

\begin{proof}
It is enough to check that $NM_m\hookrightarrow NM_n$, preserving all $\inf$ and $\sup$. Take an injective map $\phi$ from the lattice reduct of $NM_m$ to the one of $NM_n$ such that:
\begin{itemize}
\item $\phi(0)=0$, $\phi(1)=1$.
\item if  $NM_m, NM_n$ have negation fixpoints $f, f'$, then $\phi(f)=f'$.
\item $\phi$ maps all the positive elements of $NM_m$ into the ones of $NM_n$, preserving the order. That is, for every $x,y\in NM_m^+$ with $x<y$ it holds that $\phi(x), \phi(y)\in NM_n^+$ and $\phi(x)<\phi(y)$.
\item for every negative element $x\in NM_m^-$, $\phi(x)=1-\phi(1-x)$.
\end{itemize}
An easy check shows that $\phi$ is an embedding that preserves all $\inf$ and $\sup$. This shows that $TAUT_{NM_{n}}\forall\subseteq TAUT_{NM_m}\forall$.

To conclude, note that $NM_m\models\bigvee_{0<i<m}(p_i\to p_{i+1})$, but $NM_n\not\models\bigvee_{0<i<m}(p_i\to p_{i+1})$: hence $TAUT_{NM_{n}}\forall\subset TAUT_{NM_m}\forall$.
\end{proof}
Moreover, by inspecting the previous proof, we obtain
\begin{corollary}
For every \emph{even} integer $n>1$,  it holds that $TAUT_{NM_{n+1}}\forall\subset TAUT_{NM_n}\forall$.
\end{corollary}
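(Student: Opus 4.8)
The plan is to imitate the proof of the preceding theorem. The heart of it is to produce a complete embedding $\phi\colon NM_n\hookrightarrow NM_{n+1}$: once this is done, the argument used in \Cref{prop:inc} (a complete embedding $\mathcal V\hookrightarrow\mathcal W$ yields $TAUT_{\mathcal W}\forall\subseteq TAUT_{\mathcal V}\forall$) gives the inclusion $TAUT_{NM_{n+1}}\forall\subseteq TAUT_{NM_n}\forall$, and then one separates the two chains with a single formula.

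For the embedding, note that since $n$ is even $NM_n$ has no negation fixpoint and consists of $n/2$ positive elements (the top $n/2$, $1$ included) together with their $n/2$ negative mirror images (the bottom $n/2$, $0$ included), whereas $n+1$ being odd $NM_{n+1}$ does have a negation fixpoint $f$, and besides $f$ it likewise has $n/2$ positive and $n/2$ negative elements. Because the two positive parts have the same cardinality $n/2$ there is a unique order isomorphism between them; I would take $\phi$ to coincide with it on the positive elements of $NM_n$ and set $\phi(x)=1-\phi(1-x)$ on the negative ones (so $\phi(0)=0$ and $\phi(1)=1$). This is exactly the map built in the previous proof, the only change being that the clause about fixpoints is now vacuous on the source side --- $NM_n$ has none --- so $\phi$ simply does not hit $f$, which is harmless. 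Since $\phi$ is an order embedding that commutes with $n$ and fixes $0$ and $1$, and since in \emph{every} NM-chain the operations $*$ and $\Rightarrow$ are determined by $\le$ and $n$ alone (the formulas recalled in \Cref{subsec:sem}), $\phi$ is an embedding of NM-chains; being a map between finite chains it trivially preserves all $\inf$ and $\sup$, hence is complete. This yields $TAUT_{NM_{n+1}}\forall\subseteq TAUT_{NM_n}\forall$.

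Strictness I would get from the second part of \Cref{teo:fixpointeq}: writing $p$ for a predicate of arity zero, $NM_n$ satisfies $\neg(\neg p^2)^2\leftrightarrow(\neg(\neg p)^2)^2$ because it has no negation fixpoint, while $NM_{n+1}$ does not, because it contains its fixpoint $f$; so this formula lies in $TAUT_{NM_n}\forall\setminus TAUT_{NM_{n+1}}\forall$ and the inclusion is strict. (Alternatively, the ``staircase'' formula of \Cref{cor:inc}, valid in $NM_n$ but failing in the larger chain $NM_{n+1}$, does the same job.) I do not expect a genuine obstacle here; the one point that wants care is checking that the embedding recipe of the previous theorem still goes through across the ``fixpoint boundary'' --- that the positive and negative parts of $NM_n$ and of $NM_{n+1}$ still have matching sizes ($n/2$ each, which is where the parity of $n$ enters) and that dropping $f$ from the range is inconsequential --- and both points follow at once from the fact that $*$ and $\Rightarrow$ read off only from the order and the negation.
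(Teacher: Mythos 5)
Your proof is correct and matches the paper's intended argument: the corollary is obtained exactly by re-running the embedding construction of the preceding theorem (where, as you note, the fixpoint clause is vacuous on the source side because $n$ is even, so $NM_n$ embeds into $NM_{n+1}$ by matching up the $n/2$ positive elements and reflecting through the negation), which gives the non-strict inclusion, followed by a single separating formula. The only cosmetic difference is the choice of witness for strictness --- the paper's previous proof uses the staircase formula $\bigvee_{0<i<n}(p_i\to p_{i+1})$, while you lead with $BP(p)=\neg(\neg p^2)^2\leftrightarrow(\neg(\neg p)^2)^2$; both work, and you mention the former as an alternative anyway.
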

In \cite{fgod} it is shown that the first-order tautologies of G$_\uparrow$ are the first-order formulas valid in all finite Gödel-chains. We will show that, under this point of view, NM$_\infty$ plays the same role of G$_\uparrow$: that is $TAUT_{NM_\infty}\forall=\bigcap_{n\geq 2}TAUT_{NM_n}\forall$. 

We start with the following lemma, that says that if an $NM_\infty$-model assigns to the atomic formulas truth values between a value $\alpha$ and its negation, then the same holds for every other formula.
\begin{lemma}\label{lem:modfin}
Let $\mathbf{M}=\lag M,\{r_p\}_{p\in\mathbf{P}},\{m_c\}_{c\in\mathbf{C}}\rog$ be an $NM_\infty$-model. For $\alpha\in NM_\infty$, consider the $NM_\infty$-model $\mathbf{M}_\alpha=\lag M,\{r'_p\}_{p\in\mathbf{P}},\{m_c\}_{c\in\mathbf{C}}\rog$ such that, for each atomic formula $\psi$ and every evaluation $v$
\begin{equation}
\tag{m}\Vert\psi\Vert_{\mathbf{M}_\alpha,v}=\begin{cases}
1 &\text{if }\Vert\psi\Vert_{\mathbf{M},v}>|\alpha|\\
0 &\text{if }\Vert\psi\Vert_{\mathbf{M},v}<n(|\alpha|)\\
\Vert\psi\Vert_{\mathbf{M},v}&\text{otherwise}
\end{cases}
\label{eq:modfin}
\end{equation}
Where $|\alpha|=\max(\alpha, n(\alpha))$.

Then (\ref{eq:modfin}) holds for every first-order formula $\varphi$.
\end{lemma}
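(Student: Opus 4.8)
The plan is to prove, by induction on the structure of $\varphi$, the sharper statement that for every first-order formula $\varphi$ and every evaluation $v$ one has $\Vert\varphi\Vert_{\mathbf{M}_\alpha,v}=g\big(\Vert\varphi\Vert_{\mathbf{M},v}\big)$, where $g\colon NM_\infty\to NM_\infty$ is the clamping map sending $x$ to $1$ if $x>|\alpha|$, to $0$ if $x<n(|\alpha|)$, and to $x$ otherwise; this is exactly \ref{eq:modfin} read off at the value $\Vert\varphi\Vert_{\mathbf{M},v}$. Since $NM_\infty$ is a complete chain, every $NM_\infty$-model is safe and $\mathbf{M}_\alpha$ is well defined, and the base case ($\varphi$ atomic) is precisely the hypothesis.

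The argument rests on two properties of $g$. First, $g$ is a homomorphism of NM-chains: it fixes $0$ and $1$; it commutes with the negation, by the obvious case split $x>|\alpha|$ / $n(|\alpha|)\le x\le|\alpha|$ / $x<n(|\alpha|)$; it commutes with $\min$ and $\max$ because it is order-preserving; and it commutes with $*$ and $\Rightarrow$, which I would check directly from the explicit descriptions of these operations in an NM-chain recalled in \Cref{subsec:sem}, via a case analysis on the position of each argument relative to the interval $[n(|\alpha|),|\alpha|]$. (It helps to note that $g$ maps $NM_\infty$ onto the finite subalgebra $\big([n(|\alpha|),|\alpha|]\cap NM_\infty\big)\cup\{0,1\}$, isomorphic to some $NM_k$, and that the case $|\alpha|=1$, i.e.\ $\alpha\in\{0,1\}$, is trivial since then $g$ is the identity.) This disposes of the inductive steps for $\bot,\top,\&,\land,\to$, hence also for $\neg,\vee,\leftrightarrow$.

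Second, $g$ preserves all infima and all suprema (which all exist by completeness of $NM_\infty$). One inequality, $g(\inf W)\le\inf g(W)$ and dually for $\sup$, is immediate from monotonicity. For the other: if $\inf W>|\alpha|$ then $g$ is constantly $1$ on $W$; if $\inf W<n(|\alpha|)$ then some element of $W$ is $<n(|\alpha|)$, so $\inf g(W)=0=g(\inf W)$; and if $\inf W$ lies in $[n(|\alpha|),|\alpha|]$, then — this set being a finite, hence discrete, subset of $NM_\infty$ that avoids $0$ and $1$ — the infimum is attained, say by $m\in W$, whence $g(\inf W)=g(m)\in g(W)$. The case of $\sup$ is dual. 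Feeding this into the $(\forall x)$ and $(\exists x)$ steps (using the inductive hypothesis inside the $\inf$/$\sup$) completes the induction.

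The only genuine work is the bookkeeping in the homomorphism check for $*$ and $\Rightarrow$ — a handful of sub-cases, one or two of them mildly counterintuitive, such as $x>n(y)$ together with $g(x)\le n(g(y))$, a configuration that nonetheless forces $g(\min(x,y))=0$ — and the observation that infima and suprema landing in the middle interval must actually be attained; granting these, the induction itself is routine.
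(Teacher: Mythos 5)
Your argument is correct, and while it ends up performing the same elementary case analyses as the paper, it packages them quite differently. The paper proves \ref{eq:modfin} by a direct structural induction, comparing $\Vert\cdot\Vert_{\mathbf{M},v}$ with $\Vert\cdot\Vert_{\mathbf{M}_\alpha,v}$ connective by connective through an explicit discussion of where the subformulas' values fall relative to $[n(|\alpha|),|\alpha|]$ (assuming w.l.o.g.\ $\alpha\geq\frac{1}{2}$); the quantifier step is handled by noting that if $\Vert(\forall x)\psi\Vert_{\mathbf{M},v}$ lands in the middle interval then some instance does too. You instead factor the claim through the clamping map $g$ and reduce everything to two statements about $g$ alone: that it is a homomorphism of $NM_\infty$ onto the finite subalgebra $\bigl([n(|\alpha|),|\alpha|]\cap NM_\infty\bigr)\cup\{0,1\}\simeq NM_k$, and that it preserves all infima and suprema, the only delicate point being that an infimum (supremum) falling in the middle interval --- which is bounded away from the accumulation points $0$ and $1$ of $NM_\infty$ --- must be attained. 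The sub-cases you need for $*$ and $\Rightarrow$, including the one you flag where $x>n(y)$ yet $g(x)\leq n(g(y))$, are exactly the inline cases of the paper's proof, so neither route is shorter; what your decomposition buys is modularity: the image of $g$ being literally a finite NM-chain makes the passage to a model over some $NM_k$ in \Cref{teo:tautinc} immediate, and isolating the inf/sup-preservation step pinpoints precisely where the order structure of $NM_\infty$ (discreteness away from $0$ and $1$) enters and therefore what must be re-examined when transferring the lemma to other chains as in \Cref{rem:stdcut}. Do write out the $*$ and $\Rightarrow$ commutation checks in full if you submit this; they are short but they are where the content lives.
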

\begin{proof}
By structural induction. Since $\mathbf{M}_\alpha$ and $\mathbf{M}_{n(\alpha)}$ define the same model we will assume, without loss of generality, that $\alpha\geq\frac{1}{2}$ (otherwise we set $\alpha=n(\alpha)$).
\begin{itemize}
\item If $\varphi$ is atomic or $\bot$, then there is nothing to prove.
\item $\varphi\df\psi\land\chi$ and the claim holds for $\psi$ and $\chi$. First of all note that $\Vert\psi\land\chi\Vert_{\mathbf{M},v}=\min(\Vert\psi\Vert_{\mathbf{M},v},\Vert\chi\Vert_{\mathbf{M},v})$ and $\Vert\psi\land\chi\Vert_{\mathbf{M}_\alpha,v}=\min(\Vert\psi\Vert_{\mathbf{M}_\alpha,v},\Vert\chi\Vert_{\mathbf{M}_\alpha,v})$: from the induction hypothesis, if $\Vert\psi\Vert_{\mathbf{M},v}=\Vert\chi\Vert_{\mathbf{M},v}$, then the lemma holds.

For the other cases, note that if $\Vert\psi\Vert_{\mathbf{M},v}<\Vert\chi\Vert_{\mathbf{M},v}$ ($>$), then $\Vert\psi\Vert_{\mathbf{M}_\alpha,v}\leq\Vert\chi\Vert_{\mathbf{M}_\alpha,v}$ ($\geq$). Suppose that $\Vert\psi\Vert_{\mathbf{M},v}<\Vert\chi\Vert_{\mathbf{M},v}$. If $\Vert\psi\Vert_{\mathbf{M}_\alpha,v}<\Vert\chi\Vert_{\mathbf{M}_\alpha,v}$  then, applying the induction hypothesis, we have the result. The other case is $\Vert\psi\Vert_{\mathbf{M}_\alpha,v}=\Vert\chi\Vert_{\mathbf{M}_\alpha,v}\in\{0,1\}$: clearly either $\Vert\chi\Vert_{\mathbf{M},v}<n(\alpha)$ or $\Vert\psi\Vert_{\mathbf{M},v}>\alpha$. Again, applying the induction hypothesis, the claim follows.
\item $\varphi\df\psi\&\chi$ and the claim holds for $\psi$ and $\chi$. We have two cases:
\begin{itemize}
\item $\Vert\varphi\Vert_{\mathbf{M},v}=0$: this happens if and only if $\Vert\psi\Vert_{\mathbf{M},v}\leq n(\Vert\chi\Vert_{\mathbf{M},v})$. Direct inspection shows that this implies $\Vert\psi\Vert_{\mathbf{M}_\alpha,v}\leq n(\Vert\chi\Vert_{\mathbf{M}_\alpha,v})$ and hence $\Vert\varphi\Vert_{\mathbf{M}_\alpha,v}=0$.
\item $\Vert\varphi\Vert_{\mathbf{M},v}=\min(\Vert\psi\Vert_{\mathbf{M},v},\Vert\chi\Vert_{\mathbf{M},v})>0$: this happens if and only if $\Vert\psi\Vert_{\mathbf{M},v}> n(\Vert\chi\Vert_{\mathbf{M},v})$.

    If $\Vert\psi\Vert_{\mathbf{M},v}<n(\alpha)$ then $\Vert\varphi\Vert_{\mathbf{M},v}<n(\alpha)$ and $\Vert\psi\Vert_{\mathbf{M}_\alpha,v}=0=\Vert\varphi\Vert_{\mathbf{M}_\alpha,v}$.

    If $n(\alpha)\leq\Vert\psi\Vert_{\mathbf{M},v}\leq\alpha$, then $\Vert\psi\Vert_{\mathbf{M},v}=\Vert\psi\Vert_{\mathbf{M}_\alpha,v}$ and $n(\Vert\chi\Vert_{\mathbf{M},v})<\alpha$: if $n(\alpha)\leq n(\Vert\chi\Vert_{\mathbf{M}_\alpha,v})$, then $\Vert\varphi\Vert_{\mathbf{M},v}=\Vert\varphi\Vert_{\mathbf{M}_\alpha,v}$, otherwise $n(\Vert\chi\Vert_{\mathbf{M},v})<n(\alpha)$, $\Vert\chi\Vert_{\mathbf{M},v}>\alpha$ and hence $\Vert\varphi\Vert_{\mathbf{M},v}=\Vert\psi\Vert_{\mathbf{M},v}=\Vert\varphi\Vert_{\mathbf{M}_\alpha,v}$, since $\Vert\chi\Vert_{\mathbf{M}_\alpha,v}=1$, due to the induction hypothesis.

    Finally, suppose that $\Vert\psi\Vert_{\mathbf{M},v}>\alpha$. We have that $\Vert\psi\Vert_{\mathbf{M}_\alpha,v}=1$: if $n(\Vert\chi\Vert_{\mathbf{M},v})>\alpha$, then $\Vert\chi\Vert_{\mathbf{M},v}<n(\alpha)$ and hence $\Vert\varphi\Vert_{\mathbf{M},v}=\Vert\chi\Vert_{\mathbf{M},v}$, from which we have $\Vert\chi\Vert_{\mathbf{M}_\alpha,v}=0=\Vert\varphi\Vert_{\mathbf{M}_\alpha,v}$. If $n(\alpha)\leq n(\Vert\chi\Vert_{\mathbf{M},v})\leq\alpha$, then the same holds for $\Vert\chi\Vert_{\mathbf{M},v}$ and we have $\Vert\varphi\Vert_{\mathbf{M},v}=\Vert\chi\Vert_{\mathbf{M},v}=\Vert\chi\Vert_{\mathbf{M}_\alpha,v}=\Vert\varphi\Vert_{\mathbf{M}_\alpha,v}$. If $n(\Vert\chi\Vert_{\mathbf{M},v})<n(\alpha)$, then $\Vert\chi\Vert_{\mathbf{M},v}>\alpha$ and hence $\Vert\chi\Vert_{\mathbf{M}_\alpha,v}=\Vert\psi\Vert_{\mathbf{M}_\alpha,v}=1=\Vert\varphi\Vert_{\mathbf{M}_\alpha,v}$.
\end{itemize}
\item $\varphi\df\psi\to\chi$ and the claim holds for $\psi$ and $\chi$. We have two cases.
\begin{itemize}
\item $\Vert\psi\Vert_{\mathbf{M},v}\leq\Vert\chi\Vert_{\mathbf{M},v}$: as we have already noticed, this implies $\Vert\psi\Vert_{\mathbf{M}_\alpha,v}\leq\Vert\chi\Vert_{\mathbf{M}_\alpha,v}$ and we have that $\Vert\varphi\Vert_{\mathbf{M},v}=1=\Vert\varphi\Vert_{\mathbf{M}_\alpha,v}$.
\item $\Vert\psi\Vert_{\mathbf{M},v}>\Vert\chi\Vert_{\mathbf{M},v}$: it is not difficult to check that $\Vert\psi\Vert_{\mathbf{M}_\alpha,v}\geq\Vert\chi\Vert_{\mathbf{M}_\alpha,v}$.

    If the equality holds, then $\Vert\psi\Vert_{\mathbf{M}_\alpha,v}=\Vert\chi\Vert_{\mathbf{M}_\alpha,v}\in\{0,1\}$ and either $\Vert\chi\Vert_{\mathbf{M},v}>\alpha$ or $\Vert\psi\Vert_{\mathbf{M},v}<n(\alpha)$: in both the cases $\Vert\varphi\Vert_{\mathbf{M},v}=\max(n(\Vert\psi\Vert_{\mathbf{M},v}),\Vert\chi\Vert_{\mathbf{M},v})$. If $\Vert\chi\Vert_{\mathbf{M},v}>\alpha$, then $n(\Vert\psi\Vert_{\mathbf{M},v})<n(\alpha)$ and $\Vert\varphi\Vert_{\mathbf{M},v}=\Vert\chi\Vert_{\mathbf{M},v}>\alpha$: from these facts we have $\Vert\psi\Vert_{\mathbf{M}_\alpha,v}=\Vert\chi\Vert_{\mathbf{M}_\alpha,v}=1=\Vert\varphi\Vert_{\mathbf{M}_\alpha,v}$. If $\Vert\psi\Vert_{\mathbf{M},v}<n(\alpha)$, then $n(\Vert\psi\Vert_{\mathbf{M},v}),\Vert\varphi\Vert_{\mathbf{M},v}>\alpha$ and from the induction hypothesis we have $\Vert\psi\Vert_{\mathbf{M}_\alpha,v}=0=\Vert\chi\Vert_{\mathbf{M}_\alpha,v}$ and $\Vert\varphi\Vert_{\mathbf{M}_\alpha,v}=1$.

    The last case is $\Vert\psi\Vert_{\mathbf{M}_\alpha,v}>\Vert\chi\Vert_{\mathbf{M}_\alpha,v}$: we have that $\Vert\varphi\Vert_{\mathbf{M},v}=\max(n(\Vert\psi\Vert_{\mathbf{M},v}),\Vert\chi\Vert_{\mathbf{M},v})$ and $\Vert\varphi\Vert_{\mathbf{M}_\alpha,v}=\max(n(\Vert\psi\Vert_{\mathbf{M}_\alpha,v}),\Vert\chi\Vert_{\mathbf{M}_\alpha,v})$.

There are two subcases.

\subitem $n(\Vert\psi\Vert_{\mathbf{M},v})>\Vert\chi\Vert_{\mathbf{M},v}$: clearly $\Vert\varphi\Vert_{\mathbf{M},v}=n(\Vert\psi\Vert_{\mathbf{M},v})$. If $n(\alpha)\leq\Vert\psi\Vert_{\mathbf{M},v}\leq\alpha$, then we have that $\Vert\psi\Vert_{\mathbf{M},v}=\Vert\psi\Vert_{\mathbf{M}_\alpha,v}$, $n(\Vert\psi\Vert_{\mathbf{M},v})=n(\Vert\psi\Vert_{\mathbf{M}_\alpha,v})$ and $\Vert\varphi\Vert_{\mathbf{M}_\alpha,v}=\Vert\varphi\Vert_{\mathbf{M},v}=n(\Vert\psi\Vert_{\mathbf{M},v})$ (noting that $\Vert\chi\Vert_{\mathbf{M}_\alpha,v}\leq \Vert\chi\Vert_{\mathbf{M},v}$, since $\Vert\psi\Vert_{\mathbf{M}_\alpha,v}>\Vert\chi\Vert_{\mathbf{M}_\alpha,v}$). If $\Vert\psi\Vert_{\mathbf{M},v}>\alpha$, then $\Vert\psi\Vert_{\mathbf{M}_\alpha,v}=1$, $n(\Vert\psi\Vert_{\mathbf{M},v})<n(\alpha)$ and $n(\Vert\psi\Vert_{\mathbf{M}_\alpha,v})=0$: from these facts and the hypothesis we obtain $n(\alpha)>\Vert\varphi\Vert_{\mathbf{M},v}=n(\Vert\psi\Vert_{\mathbf{M},v})>\Vert\chi\Vert_{\mathbf{M},v}$ and hence $\Vert\varphi\Vert_{\mathbf{M}_\alpha,v}=0=n(\Vert\psi\Vert_{\mathbf{M}_\alpha,v})=\Vert\chi\Vert_{\mathbf{M}_\alpha,v}$. The last case is $\Vert\psi\Vert_{\mathbf{M},v}<n(\alpha)$: we have that $\Vert\varphi\Vert_{\mathbf{M},v}=n(\Vert\psi\Vert_{\mathbf{M},v})>\alpha$ and hence $1=n(\Vert\psi\Vert_{\mathbf{M}_\alpha,v})=\Vert\varphi\Vert_{\mathbf{M}_\alpha,v}$.
\subitem $\Vert\chi\Vert_{\mathbf{M},v}>n(\Vert\psi\Vert_{\mathbf{M},v})$: we proceed analogously with the previous case.
\end{itemize}
\item $\varphi\df(\forall x)\psi(x)$ and the claim holds for $\psi(x)$: this means that, from the induction hypothesis, for every $v'\equiv_x v$ we have that (\ref{eq:modfin}) holds for $\Vert\psi(x)\Vert_{\mathbf{M},v'}$ and $\Vert\psi(x)\Vert_{\mathbf{M}_\alpha,v'}$.

We distinguish three cases.
\begin{itemize}
\item $\Vert(\forall x)\psi(x)\Vert_{\mathbf{M},v}<n(\alpha)$. Clearly there exists a $v'\equiv_x v$ such that $\Vert\psi(x)\Vert_{\mathbf{M},v'}<n(\alpha)$ and hence, applying the induction hypothesis, we have $\Vert\psi(x)\Vert_{\mathbf{M}_\alpha,v'}=0=\Vert(\forall x)\psi(x)\Vert_{\mathbf{M}_\alpha,v}$.
\item $\Vert(\forall x)\psi(x)\Vert_{\mathbf{M},v}>\alpha$. Clearly for each $v'\equiv_x v$ it holds that $\Vert\psi(x)\Vert_{\mathbf{M},v'}>\alpha$, $\Vert\psi(x)\Vert_{\mathbf{M}_\alpha,v'}=1$ (due to the induction hypothesis) and hence we have $\Vert(\forall x)\psi(x)\Vert_{\mathbf{M}_\alpha,v}=1$.
\item $n(\alpha)\leq\Vert(\forall x)\psi(x)\Vert_{\mathbf{M},v}\leq\alpha$. We have that $\Vert\psi(x)\Vert_{\mathbf{M},v'}\geq n(\alpha)$ for every $v'\equiv_x v$. Moreover there is at least a $v''\equiv_x v$ such that $\Vert\psi(x)\Vert_{\mathbf{M},v''}\leq\alpha$: due to the induction hypothesis for every such $v''$ we have that $\Vert\psi(x)\Vert_{\mathbf{M}_\alpha,v''}=\Vert\psi(x)\Vert_{\mathbf{M},v''}$. Applying again the induction hypothesis we have that $\Vert(\forall x)\psi(x)\Vert_{\mathbf{M}_\alpha,v}=\Vert(\forall x)\psi(x)\Vert_{\mathbf{M},v}$.
\end{itemize}
\end{itemize}
We do not analyze the case $\varphi\df(\exists x)\psi(x)$, since the two quantifiers are inter-definable, in NM$\forall$, as in classical logic (see \cite[theorem 2.31]{pred}).
\end{proof}
\begin{remark}\label{rem:stdcut}
It is not difficult to see that the previous lemma holds even for $[0,1]_\text{NM}$, using the same proof. 
\end{remark}
\begin{theorem}\label{teo:tautinc}
$TAUT_{NM_\infty}\forall=\bigcap_{n\geq 2} TAUT_{NM_n}\forall$.
\end{theorem}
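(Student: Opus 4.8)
The plan is to prove the two inclusions separately; only one direction needs real work. For $TAUT_{NM_\infty}\forall\subseteq\bigcap_{n\ge2}TAUT_{NM_n}\forall$ there is nothing new to do: by \Cref{teo:nminf}, $NM_n\not\models\varphi$ implies $NM_\infty\not\models\varphi$, so $TAUT_{NM_\infty}\forall\subseteq TAUT_{NM_n}\forall$ for every $n>1$ (this is exactly what \Cref{cor:inc} records), and intersecting over $n$ gives the inclusion. The substance is the reverse inclusion $\bigcap_{n\ge2}TAUT_{NM_n}\forall\subseteq TAUT_{NM_\infty}\forall$, which I would prove contrapositively: from an $NM_\infty$-model refuting $\varphi$, manufacture a \emph{finite} chain $NM_k$ that still refutes it.

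So suppose $NM_\infty\not\models\varphi$. Since $NM_\infty$ is a complete lattice, there are a (safe) $NM_\infty$-model $\mathbf{M}$ and an evaluation $v$ with $\Vert\varphi\Vert^{NM_\infty}_{\mathbf{M},v}=\beta<1$. Put $\gamma\df\max(\tfrac12,\beta)$; then $\gamma\in NM_\infty$ and $\tfrac12\le\gamma<1$. Choose $\alpha$ with $|\alpha|=\gamma$ — for instance $\alpha=\gamma$ — and form the cut model $\mathbf{M}_\alpha$ as in \Cref{lem:modfin}. That lemma yields two facts. First, applied to $\varphi$ itself and using $\beta\le\gamma=|\alpha|$, it gives $\Vert\varphi\Vert^{NM_\infty}_{\mathbf{M}_\alpha,v}\in\{0,\beta\}$, hence $<1$. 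Second, under $\mathbf{M}_\alpha$ every formula, under every evaluation, takes a truth value in
\[
S\df\{0,1\}\cup\bigl([\,n(\gamma),\gamma\,]\cap NM_\infty\bigr).
\]

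The key point is that $S$ is finite and is the universe of a subalgebra of $NM_\infty$. Finiteness holds because $NM_\infty$ accumulates only at $0$ and at $1$, so the closed interval $[\,n(\gamma),\gamma\,]\subseteq(0,1)$ contains only finitely many of its elements. For closure: $n$ maps $[\,n(\gamma),\gamma\,]$ onto itself, and by the explicit formulas for $*$ and $\Rightarrow$ on NM-chains the set $S$ is closed under $*,\Rightarrow,\min,\max$ and contains $0,1$; thus $S$ is the universe of a finite NM-subchain of $NM_\infty$, which, by the uniqueness of finite NM-chains of a given cardinality, is isomorphic to $NM_k$ with $k=|S|\ge2$. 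Now regard $\mathbf{M}_\alpha$ as an $NM_k$-model through this isomorphism (it is safe, being finite). Every infimum and supremum needed to evaluate $\varphi$ in $\mathbf{M}_\alpha$ is taken over a subset of the finite set $S$, so it is a minimum or maximum and is computed identically in $S$ and in $NM_\infty$; hence $\Vert\varphi\Vert^{NM_k}_{\mathbf{M}_\alpha,v}=\Vert\varphi\Vert^{NM_\infty}_{\mathbf{M}_\alpha,v}<1$. Therefore $NM_k\not\models\varphi$, so $\varphi\notin\bigcap_{n\ge2}TAUT_{NM_n}\forall$, which is what we needed.

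I expect the genuine difficulty to have been front-loaded into \Cref{lem:modfin}: the delicate step is that truncating the atomic truth values to the band $[\,n(|\alpha|),|\alpha|\,]$ (collapsing what lies above to $1$ and below to $0$) does not disturb the truth value of a compound formula once that value, too, falls inside the band. Granting the lemma, the only additional ingredient is the elementary order-theoretic remark that a closed subinterval of $NM_\infty$ avoiding $\{0,1\}$ is finite; the verifications that $S$ is a subalgebra and that the infima in question are minima are routine.
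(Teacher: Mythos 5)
Your proof is correct and follows essentially the same route as the paper: the easy inclusion via \Cref{cor:inc}, and for the converse the truncation Lemma \ref{lem:modfin} applied at a cutoff at or above the refuting truth value, followed by the observation that the truncated model lives on a finite subalgebra of $NM_\infty$ isomorphic to some $NM_k$ in which all the relevant infima and suprema are minima and maxima. The paper leaves the last step as ``it is easy to construct a model over an appropriate $NM_k$''; you have simply spelled out that routine verification.
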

\begin{proof}
The fact that $TAUT_{NM_\infty}\forall\subseteq\bigcap_{n\geq 2}TAUT_{NM_n}\forall$ follows from \Cref{cor:inc}.

Concerning the reverse inclusion, suppose that $\Vert\varphi\Vert^{NM_\infty}_{\mathbf{M},v}=\alpha<1$. Take $\alpha<\beta<1$: due to \Cref{lem:modfin} it is easy to check that $\Vert\varphi\Vert^{NM_\infty}_{\mathbf{M}_\beta,v}\leq\alpha$. Since $\mathbf{M}_\beta$ uses only a finite number of truth values, it is easy to construct a model $\mathbf{M}'_\beta$ (starting from $\mathbf{M}_\beta$ and modifying the range of the various $r'_P$'s) over an appropriate $NM_k$ such that $\Vert\varphi\Vert^{NM_k}_{\mathbf{M}'_\beta,v}=\Vert\varphi\Vert^{NM_\infty}_{\mathbf{M}_\beta,v}$.

\end{proof}
We now introduce a family of NM-chains that will be useful to give an equivalent characterization of $TAUT_{[0,1]_\text{NM}}\forall$.
\paragraph*{}
For $\alpha\in(0,1)$, let $\mathcal{A}_\alpha$ be the NM-chain defined over the universe $[1-|\alpha|,|\alpha|]\cup\{0,1\}$ and $n(x)=1-x$ (recall that $|\alpha|=\max(|\alpha|,n(|\alpha|))$): observe that $\mathcal{A}_\alpha$ and $\mathcal{A}_{n(\alpha)}$ are isomorphic and every chain of this type forms a complete lattice.
\paragraph*{}
Due to \Cref{rem:stdcut} and \Cref{teo:incstd}, with a proof very similar to the one of \Cref{teo:tautinc}, we obtain the following result: this is - mutatis mutandis - the analogous of \Cref{teo:tautinc} for $[0,1]_\text{NM}$ and the family of NM-chains previously introduced.
\begin{theorem}\label{teo:tautincstd}
$TAUT_{[0,1]_\text{NM}}\forall=\bigcap_{\alpha\in (0,1)} TAUT_{\mathcal{A}_\alpha}\forall$.
\end{theorem}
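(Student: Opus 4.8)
The plan is to prove the two inclusions separately. The inclusion from left to right is immediate, while for the converse I would transcribe the argument of \Cref{teo:tautinc}, with the family $\{NM_n\}_{n\geq 2}$ replaced by $\{\mathcal{A}_\alpha\}_{\alpha\in(0,1)}$ and with \Cref{rem:stdcut} playing the role that \Cref{lem:modfin} played there.

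For $TAUT_{[0,1]_\text{NM}}\forall\subseteq\bigcap_{\alpha\in(0,1)}TAUT_{\mathcal{A}_\alpha}\forall$ it suffices to note that for each $\alpha\in(0,1)$ the universe $[1-|\alpha|,|\alpha|]\cup\{0,1\}$ is a subset of $[0,1]$ closed under $n(x)=1-x$, so $\mathcal{A}_\alpha$ is a subalgebra of $[0,1]_\text{NM}$; then \Cref{prop:inc} (equivalently \Cref{teo:incstd}) gives $TAUT_{[0,1]_\text{NM}}\forall\subseteq TAUT_{\mathcal{A}_\alpha}\forall$, and intersecting over $\alpha$ gives the claim.

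For the reverse inclusion, suppose $\varphi\notin TAUT_{[0,1]_\text{NM}}\forall$, so there are a (safe) $[0,1]_\text{NM}$-model $\mathbf{M}$ and an evaluation $v$ with $\Vert\varphi\Vert_{\mathbf{M},v}^{[0,1]_\text{NM}}=a<1$. Fix $\beta$ with $\max(a,\tfrac12)<\beta<1$, so that $|\beta|=\beta$ and $\beta>a$. By \Cref{rem:stdcut}, the cut model $\mathbf{M}_\beta$, built from $\mathbf{M}$ exactly as in \Cref{lem:modfin} but relative to $[0,1]_\text{NM}$, satisfies \eqref{eq:modfin} for $\varphi$ itself; since $a<\beta=|\beta|$, this forces $\Vert\varphi\Vert_{\mathbf{M}_\beta,v}^{[0,1]_\text{NM}}\in\{a,0\}$, hence it is still $<1$. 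Now every truth value taken by the atomic formulas in $\mathbf{M}_\beta$ lies in $[1-\beta,\beta]\cup\{0,1\}$, i.e. in the universe of $\mathcal{A}_\beta$, so $\mathbf{M}_\beta$ is an $\mathcal{A}_\beta$-model, and it is $\mathcal{A}_\beta$-safe because $\mathcal{A}_\beta$ is a complete lattice. Once one checks that the truth value of $\varphi$ is the same whether computed in $\mathcal{A}_\beta$ or in $[0,1]_\text{NM}$, the model $\mathbf{M}_\beta$ witnesses $\varphi\notin TAUT_{\mathcal{A}_\beta}\forall$, and we are done.

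The one point requiring genuine (if routine) checking --- and the main obstacle --- is that $\mathcal{A}_\beta\hookrightarrow[0,1]_\text{NM}$ is a \emph{complete} embedding, so that all infima and suprema needed to evaluate $\varphi$ agree in $\mathcal{A}_\beta$ and in $[0,1]_\text{NM}$; this yields $\Vert\varphi\Vert_{\mathbf{M}_\beta,v}^{\mathcal{A}_\beta}=\Vert\varphi\Vert_{\mathbf{M}_\beta,v}^{[0,1]_\text{NM}}$. It holds because $[1-\beta,\beta]$ is a closed real interval, hence closed under real infima and suprema of its subsets, and adjoining $0$ and $1$ preserves completeness, so $\mathcal{A}_\beta$ is a complete subalgebra of $[0,1]_\text{NM}$, its NM-operations being the restrictions of those of $[0,1]_\text{NM}$. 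Compared with \Cref{teo:tautinc} this is in fact slightly easier: there one had to re-index the finitely many truth values of $\mathbf{M}_\beta$ into some $NM_k$, whereas here $\mathcal{A}_\beta$ already contains every value used by $\mathbf{M}_\beta$, so no further modification of the model is needed.
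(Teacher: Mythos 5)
Your proposal is correct and follows essentially the route the paper intends: the easy inclusion via \Cref{teo:incstd} (each $\mathcal{A}_\alpha$ being a subalgebra of $[0,1]_\text{NM}$), and the converse by applying the cut-model construction of \Cref{lem:modfin} in the form of \Cref{rem:stdcut}, exactly as in the proof of \Cref{teo:tautinc}. Your observation that the final step is simpler here --- no re-indexing into a finite chain is needed because $\mathcal{A}_\beta$ already contains all truth values of $\mathbf{M}_\beta$ and is a complete sublattice of $[0,1]$ --- is accurate and correctly identifies the one point ($\mathcal{A}_\beta\hookrightarrow[0,1]_\text{NM}$ preserving all infima and suprema) that needs checking.
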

In classical (first-order) logic every formula can be written in prenex normal form: this is because the so called ``quantifiers shifting laws'' hold. 
For Nilpotent Minimum logic the situation is different: indeed, as shown in \Cref{teo:nmopen} some quantifier shifting laws fail in NM$\forall$. One can ask which is the situation for the NM-chains, about these formulas. 

The following theorem shows a characterization of the validity of these shifting laws in terms of the order type of an NM-chain.
\begin{theorem}\label{teo:prenex}
Consider the following formulas:
\begin{align}
&(\forall x)(\varphi(x)\land\nu)\leftrightarrow ((\forall x)\varphi(x)\land\nu)\\
&(\exists x)(\varphi(x)\land\nu)\leftrightarrow ((\exists x)\varphi(x)\land\nu)\\
&(\forall x)(\varphi(x)\vee\nu)\leftrightarrow ((\forall x)\varphi(x)\vee\nu)\\
&(\exists x)(\varphi(x)\vee\nu)\leftrightarrow ((\exists x)\varphi(x)\vee\nu)\\
&(\forall x)(\varphi(x)\land\psi(x))\leftrightarrow ((\forall x)\varphi(x)\land(\forall x)\psi(x))\\
&(\exists x)(\varphi(x)\land\psi(x))\leftrightarrow ((\exists x)\varphi(x)\land(\exists x)\psi(x))\\
&(\forall x)(\varphi(x)\vee\psi(x))\leftrightarrow ((\forall x)\varphi(x)\vee(\forall x)\psi(x))\\
&(\exists x)(\varphi(x)\vee\psi(x))\leftrightarrow ((\exists x)\varphi(x)\vee(\exists x)\psi(x))\\
&(\exists x)(\varphi(x)\&\nu)\leftrightarrow ((\exists x)\varphi(x)\&\nu)\\
&(\exists x)(\varphi(x)\&\psi(x))\leftrightarrow ((\exists x)\varphi(x)\&(\exists x)\psi(x))\\
&(\forall x)(\varphi(x)\to\nu)\leftrightarrow ((\exists x)\varphi(x)\to\nu)\\
&(\forall x)(\nu\to\varphi(x))\leftrightarrow (\nu\to(\forall x)\varphi(x))\\
&\neg(\exists x)\varphi(x)\leftrightarrow(\forall x)\neg\varphi(x)\\
&\neg(\forall x)\varphi(x)\leftrightarrow(\exists x)\neg\varphi(x)\\
&(\forall x)(\varphi(x)\&\nu)\leftrightarrow ((\forall x)\varphi(x)\&\nu)\\
&(\forall x)(\varphi(x)\&\psi(x))\leftrightarrow ((\forall x)\varphi(x)\&(\forall x)\psi(x))\\
&(\exists x)(\varphi(x)\to\nu)\leftrightarrow ((\forall x)\varphi(x)\to\nu)\\
&(\exists x)(\nu\to\varphi(x))\leftrightarrow (\nu\to(\exists x)\varphi(x))
\end{align}
where $x$ does not occurs freely in $\nu$. We have that
\begin{itemize}
\item The formulas (1)-(14) hold in every NM-chain.
\item The formulas (15)-(18) hold in every NM-chain $\mathcal{A}$ in which every element of $\mathcal{A}\setminus\{0,1\}$ has a predecessor in $\mathcal{A}$, and fail to hold in any other NM-chain.
\end{itemize}
\end{theorem}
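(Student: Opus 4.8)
The plan is to work directly with the semantics of \Cref{subsec:sem}: $\forall,\exists$ are $\inf,\sup$, $\land,\vee$ are $\min,\max$, and $\&,\to,\neg$ are the NM-chain operations $*,\Rightarrow,n$, where $n$ is an order-reversing bijection and therefore exchanges every existing infimum with a supremum. For each of the eighteen biconditionals I would fix an arbitrary NM-chain $\mathcal{A}$, an arbitrary safe $\mathcal{A}$-model and an evaluation, and reduce the claim to an identity (or an inequality that already forces the biconditional to take value $1$) between truth values, to be checked inside $\mathcal{A}$.

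For the first group (1)--(14) the verification splits on the outermost connective. The shifts built only from $\land,\vee$ and at most one $x$-free subformula reduce to elementary distributivity facts valid in \emph{every} chain (such as $\inf_x\min(f(x),c)=\min(\inf_x f(x),c)$, and, using prelinearity, $\inf_x\max(f(x),c)=\max(\inf_x f(x),c)$), while the negation shifts (13)--(14) reduce to $n(\inf W)=\sup n(W)$ and $n(\sup W)=\inf n(W)$. The shifts involving $\&$ or $\to$ need a finer argument: although $*$ and $\Rightarrow$ are discontinuous, they are well-behaved along the side from which the relevant quantifier approaches its value. For instance, for $(\forall x)(\varphi(x)\to\nu)\leftrightarrow((\exists x)\varphi(x)\to\nu)$ one sets $c=\Vert\nu\Vert$ and checks $\inf_x(\Vert\varphi\Vert_x\Rightarrow c)=(\sup_x\Vert\varphi\Vert_x)\Rightarrow c$: if the supremum $s$ is attained this is immediate, and otherwise the $\Vert\varphi\Vert_x$ approach $s$ strictly \emph{from below}, where $y\mapsto(y\Rightarrow c)=\max(n(y),c)$ does not jump, so the left-hand side equals $\max(n(s),c)=s\Rightarrow c$. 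The remaining $\&$-shifts and $(\forall x)(\nu\to\varphi(x))\leftrightarrow(\nu\to(\forall x)\varphi(x))$ are handled in the same spirit, always using that the limit is taken from the side where the NM operation is continuous.

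For the second group (15)--(18) the point is that each of these schemata records the \emph{opposite} configuration: the relevant approach happens from the side on which $*$ resp.\ $\Rightarrow$ \emph{does} jump — an infimum equal to a non-extreme element reached strictly from above, or a supremum reached strictly from below. By \Cref{lem:ord}, a non-extreme element is reachable strictly from above iff its negation has no predecessor, so the hypothesis ``every element of $\mathcal{A}\setminus\{0,1\}$ has a predecessor'' is exactly what excludes all these configurations. Under that hypothesis, the theorem characterizing the validity of \ref{form:cu} and \ref{form:cd} gives $\mathcal{A}\models$\ref{form:cu} and $\mathcal{A}\models$\ref{form:cd}, which translate into: every infimum of truth values existing in $\mathcal{A}$ is a minimum or equals $0$, and dually for suprema; feeding this back into the case analysis of the previous paragraph (now with the obstruction removed) yields value $1$ for (15)--(18) in every safe $\mathcal{A}$-model. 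Note that (15) is literally the formula \ref{eq:rigcont} of \Cref{teo:nmopen}, so this recovers in particular its validity in $NM_\infty$ and $NM{'}^-_\infty$. For the converse, suppose some $a\in\mathcal{A}\setminus\{0,1\}$ has no predecessor, so $\mathcal{A}\not\models$\ref{form:cu} and $\mathcal{A}\not\models$\ref{form:cd}. For (17) it suffices that the schema already \emph{contains} \ref{form:cu}: substituting $\nu\df(\forall y)\varphi(y)$ with $y$ fresh turns (17) into $(\exists x)(\varphi(x)\to(\forall y)\varphi(y))\leftrightarrow((\forall x)\varphi(x)\to(\forall y)\varphi(y))$, whose right-hand side has value $1$ in every model, i.e.\ into \ref{form:cu}; symmetrically $\nu\df(\exists y)\varphi(y)$ turns (18) into \ref{form:cd}. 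For (15) and (16) I would reuse the model of the proof of \Cref{teo:nmopen}: replacing $a$ by $n(a)$ if necessary we may assume via \Cref{lem:ord} that $n(a)$ has no successor, hence $\{w\in\mathcal{A}:w>n(a)\}$ has infimum $n(a)$ but no minimum; interpreting a unary $P$ cofinally in that set and a $0$-ary $p$ as $n(a)$, a direct inspection gives the biconditional the value $\max(a,n(a))<1$, exactly as in the $NM'_\infty$ computation there.

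The difficulty will be concentrated in two places. In the first group, the $\&$ and $\to$ shifts require a somewhat tedious sub-case analysis keyed to the position of the truth values relative to the cut $\{x\le n(y)\}$ and to the diagonal, and one must be careful not to conflate a harmless configuration (a supremum approached from below inside a $\to$-shift) with a fatal one (an infimum approached from above) — the entire difference between a shift of group one and its superficially dual shift of group two lives in that distinction. In the ``fails'' direction of the second group, one must ensure the witnessing model is \emph{safe} when $\mathcal{A}$ is not a complete lattice, i.e.\ that only infima and suprema genuinely existing in $\mathcal{A}$ are invoked; this is automatic for the chains studied in the previous sections but deserves an explicit remark in the general statement.
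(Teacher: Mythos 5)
Your overall strategy is the right one, and it is exactly what the paper's one-line proof (``A direct inspection'') presupposes: reduce each schema to an identity between infima/suprema of truth values in the chain, split on whether the relevant extremum is attained, and exploit the fact that $y\mapsto y*c$ commutes with suprema and $y\mapsto c\Rightarrow y$ with infima, the only sources of failure being $*$ applied to a non-attained infimum and $y\mapsto y\Rightarrow c$ applied to a non-attained supremum. Your identification of the predecessor hypothesis with ``every non-attained infimum is $0$ and every non-attained supremum is $1$'' via \Cref{lem:ord}, the specialization of (17) and (18) to \ref{form:cu} and \ref{form:cd} by substituting $\nu\df(\forall y)\varphi(y)$ resp.\ $\nu\df(\exists y)\varphi(y)$, the reuse of the $NM'_\infty$-style countermodel for (15), and the caveat about safety of the countermodels in non-complete chains are all correct, and amount to considerably more detail than the paper supplies.

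There is, however, a concrete step that fails: the blanket claim that (1)--(14) all reduce to ``elementary distributivity facts valid in every chain.'' The identities you quote, $\inf_x\min(f(x),c)=\min(\inf_x f(x),c)$ and $\inf_x\max(f(x),c)=\max(\inf_x f(x),c)$, are indeed valid in any chain when $c$ is constant, and they dispose of (1)--(5), (8), (9), (11)--(14). But the schemata in which \emph{both} arguments depend on $x$ are biconditionals only for the combinations $(\forall,\land)$ and $(\exists,\vee)$: formulas (6) and (7) fail in one direction already classically (on a two-element domain with $\varphi,\psi$ true at complementary points, (6) evaluates to $0\leftrightarrow 1=0$ and (7) to $1\leftrightarrow 0=0$ in every nontrivial NM-chain), and (10) fails the same way since $(\exists x)\varphi\,\&\,(\exists x)\psi$ can equal $1$ while $\sup_x(a_x*b_x)=0$; the two-variable analogues $\inf_x\max(f(x),g(x))=\max(\inf f,\inf g)$ and $\sup_x\min(f,g)=\min(\sup f,\sup g)$ are simply false. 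Likewise (16) is not rescued by the predecessor condition: in $NM_3$ with $a=(1,\tfrac12)$ and $b=(\tfrac12,1)$ one gets $\inf_x(a_x*b_x)=\tfrac12$ while $(\inf_x a_x)*(\inf_x b_x)=\tfrac12*\tfrac12=0$, so (16) takes value $\tfrac12$ in a finite chain. An honest direct inspection therefore refutes (6), (7), (10) and (16) as stated (they hold only as one-directional implications), and your write-up should either record these counterexamples or restrict those items to the valid direction rather than assert their validity wholesale. For the remaining fourteen schemata your plan goes through as described.
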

\begin{proof}
A direct inspection.
\end{proof}
\begin{corollary}
\begin{itemize}
\item[]
\item The formulas (1)-(18) belong to $TAUT_{NM_\infty}\forall$, $TAUT_{NM^-_\infty}\forall$, $TAUT_{NM{'}^-_\infty}\forall$ and $TAUT_{NM_n}\forall$, for every $1<n<\omega$.
\item The formulas (1)-(14) belong to $TAUT_{[0,1]_\text{NM}}\forall$ and $TAUT_{NM'_\infty}\forall$.
\item The formulas (15)-(18) fail in $[0,1]_\text{NM}$ and $NM'_\infty$.
\end{itemize}
\end{corollary}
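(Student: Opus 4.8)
Everything is a consequence of Theorem~\ref{teo:prenex}: for the positive parts one only has to decide, for each chain in the list, whether it satisfies the order-theoretic hypothesis ``every element of $\mathcal{A}\setminus\{0,1\}$ has a predecessor'', and for the negative parts one only has to exhibit, in $[0,1]_\text{NM}$ and in $NM'_\infty$, an element of $\mathcal{A}\setminus\{0,1\}$ without a predecessor.

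First I would dispatch the parts about formulas $(1)$--$(14)$. By Theorem~\ref{teo:prenex} these hold in \emph{every} NM-chain, so in particular they belong to $TAUT_{NM_\infty}\forall$, $TAUT_{NM^-_\infty}\forall$, $TAUT_{NM{'}^-_\infty}\forall$ and $TAUT_{NM_n}\forall$ for every $1<n<\omega$ (first bullet), and also to $TAUT_{[0,1]_\text{NM}}\forall$ and $TAUT_{NM'_\infty}\forall$ (second bullet). Next, for $(15)$--$(18)$ in the first bullet I would verify the hypothesis of Theorem~\ref{teo:prenex} for each of the four chains. The finite chains $NM_n$ are trivial, since in a finite chain every element except the minimum has a predecessor. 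For $NM_\infty$ one lists the elements as the negative part $\{0\}\cup\{\tfrac1n:n\geq 3\}$, the fixpoint $\tfrac12$, and the positive part $\{1-\tfrac1n:n\geq 3\}\cup\{1\}$, and checks directly that $\tfrac1n$ has predecessor $\tfrac1{n+1}$, that $\tfrac12$ has predecessor $\tfrac13$, that $\tfrac23$ has predecessor $\tfrac12$, and that $1-\tfrac1n$ has predecessor $1-\tfrac1{n-1}$; only $0$ and $1$ lack one. For $NM^-_\infty$ and $NM{'}^-_\infty$ the computation is the same with the fixpoint removed; the one point deserving a line of care is that deleting the fixpoint does not create a new predecessorless interior element: in $NM^-_\infty$ the elements $\tfrac13$ and $\tfrac23$ become adjacent and each still has a predecessor, and in $NM{'}^-_\infty$ — which has order type $\omega+\omega^{*}$ with no element ``at the junction'' — every positive element $\tfrac12+\tfrac1{2n}$ still has the positive predecessor $\tfrac12+\tfrac1{2(n+1)}$ (which dominates all negative elements) and every negative element $\tfrac12-\tfrac1{2n}$ with $n\geq 2$ has the negative predecessor $\tfrac12-\tfrac1{2(n-1)}$. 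Hence the hypothesis of Theorem~\ref{teo:prenex} holds for all four chains, completing the first bullet.

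Finally, for the third bullet: $[0,1]_\text{NM}$ is densely ordered, so \emph{no} element of $(0,1)$ has a predecessor; and in $NM'_\infty$ the negation fixpoint $\tfrac12$ is the supremum of the strictly increasing $\omega$-chain $\{\tfrac12-\tfrac1{2n}:n\in\mathbb{N}^{+}\}$, which does not attain it, so $\tfrac12\in NM'_\infty\setminus\{0,1\}$ has no predecessor. By Theorem~\ref{teo:prenex}, formulas $(15)$--$(18)$ therefore fail in $[0,1]_\text{NM}$ and in $NM'_\infty$.

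There is no genuine obstacle here; the whole argument is an application of Theorem~\ref{teo:prenex} together with an inspection of order types. The only mildly delicate point, as noted above, is checking that passing from $NM_\infty$ and $NM'_\infty$ to their fixpoint-free subalgebras $NM^-_\infty$ and $NM{'}^-_\infty$ does not introduce an interior element without a predecessor — which one confirms by the explicit predecessor formulas just given.
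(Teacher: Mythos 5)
Your proposal is correct and matches the paper's intent exactly: the corollary is stated as an immediate consequence of Theorem~\ref{teo:prenex} (whose proof the paper leaves as ``a direct inspection''), and your verification of the predecessor condition for each chain --- including the mildly delicate check that removing the fixpoint from $NM_\infty$ and $NM'_\infty$ creates no new predecessorless interior element --- is precisely the routine order-type inspection the paper omits.
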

Finally, we summarize relationship (in terms of reciprocal inclusion) between the sets of tautologies of the NM-chains studied.
\begin{theorem}
For every integer $n>1$ and every $\emph{even}$ integer $m>1$
\begin{enumerate}
\item $TAUT_{[0,1]_\text{NM}}\forall=\bigcap_{\alpha\in (0,1)} TAUT_{\mathcal{A}_\alpha}\forall$.
\item $TAUT_{[0,1]_\text{NM}}\forall\subset TAUT_{NM_\infty}\forall\subset TAUT_{NM_n}\forall$.
\item $TAUT_{NM_\infty}\forall\subset TAUT_{NM^-_\infty}\forall\subset TAUT_{NM_m}\forall$, $TAUT_{NM'_\infty}\forall\subset TAUT_{NM{'}^-_\infty}\forall\subset TAUT_{NM_m}\forall$.
\item $TAUT_{[0,1]_\text{NM}}\forall\subseteq TAUT_{NM'_\infty}\forall\subset TAUT_{NM_n}\forall$.
\item $TAUT_{NM'_\infty}\forall\neq TAUT_{NM_\infty}\forall=\bigcap_{n\geq 2} TAUT_{NM_n}\forall$ and hence $TAUT_{NM'_\infty}\forall\subset TAUT_{NM_\infty}\forall$.
\end{enumerate}
\end{theorem}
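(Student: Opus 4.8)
The plan is to observe that the statement is a summary, so the proof is essentially bookkeeping: every clause is obtained by quoting a result already proved and, where a proper containment is asserted, by recalling the witnessing formula. I would begin by noting that item (1) is literally \Cref{teo:tautincstd}. For the non-strict inclusion in item (4), $TAUT_{[0,1]_\text{NM}}\forall\subseteq TAUT_{NM'_\infty}\forall$, I would simply invoke \Cref{teo:incstd}, since $NM'_\infty$ is an NM-chain.

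Next I would assemble the strict chains of inclusions. In item (2), $TAUT_{[0,1]_\text{NM}}\forall\subset TAUT_{NM_\infty}\forall$ is \Cref{teo:nmopen}(2) (properness witnessed by the shifting law \ref{eq:rigcont}, valid in $NM_\infty$ but failing in $[0,1]_\text{NM}$), while $TAUT_{NM_\infty}\forall\subset TAUT_{NM_n}\forall$ is a clause of \Cref{cor:inc}, whose witnessing formula is $\bigvee_{0<i<n}(p_i\to p_{i+1})$, a tautology of $NM_n$ but of no infinite NM-chain. For item (3), the inclusions $TAUT_{NM_\infty}\forall\subset TAUT_{NM^-_\infty}\forall$ and $TAUT_{NM'_\infty}\forall\subset TAUT_{NM{'}^-_\infty}\forall$ are \Cref{teo:nmopen}(1) (properness via the equation \ref{eq:fix} from \Cref{teo:fixpointeq}), and $TAUT_{NM^-_\infty}\forall\subset TAUT_{NM_m}\forall$, $TAUT_{NM{'}^-_\infty}\forall\subset TAUT_{NM_m}\forall$ for even $m$ are again the relevant clauses of \Cref{cor:inc}. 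The remaining strict inclusion $TAUT_{NM'_\infty}\forall\subset TAUT_{NM_n}\forall$ in item (4) is once more \Cref{cor:inc}.

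It remains to treat item (5). First I would quote \Cref{teo:tautinc} for the equality $TAUT_{NM_\infty}\forall=\bigcap_{n\geq 2}TAUT_{NM_n}\forall$, and \Cref{teo:nmopen}(2) for the inequality $TAUT_{NM'_\infty}\forall\neq TAUT_{NM_\infty}\forall$. To promote this inequality to a proper containment I would use \Cref{cor:inc} again: since $TAUT_{NM'_\infty}\forall\subseteq TAUT_{NM_n}\forall$ for every $n>1$, we get $TAUT_{NM'_\infty}\forall\subseteq\bigcap_{n\geq 2}TAUT_{NM_n}\forall=TAUT_{NM_\infty}\forall$, and combining with the strict inequality yields $TAUT_{NM'_\infty}\forall\subset TAUT_{NM_\infty}\forall$. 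The main (and only slightly delicate) point is precisely this last combination: one must make sure the $\neq$ coming from \Cref{teo:nmopen}(2) and the $\subseteq$ coming from \Cref{cor:inc} genuinely refer to the same pair of tautology sets so that they compose into $\subset$; no new model construction is needed beyond those carried out in the cited results.
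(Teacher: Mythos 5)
Your proposal is correct and matches the paper's treatment: the theorem is stated there as a pure summary with no separate proof, and each clause is justified exactly as you do, by citing \Cref{teo:tautincstd}, \Cref{teo:incstd}, \Cref{teo:nmopen}, \Cref{cor:inc} and \Cref{teo:tautinc}, with item (5) obtained by the same combination of the intersection identity with the inequality from \Cref{teo:nmopen}(2). Nothing is missing.
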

This theorem can be improved: indeed in the next section we will show that $TAUT_{NM'_\infty}\forall$ is not recursively enumerable. As a consequence, we have that $TAUT_{[0,1]_\text{NM}}\forall\subset TAUT_{NM'_\infty}\forall$.

\subsection{Axiomatizability and undecidability}\label{subsec:undec}
In this section we study if the sets of first-order tautologies associated to the NM-chains till introduced are axiomatizable or not: that is, we investigate if, given one of the previous NM-chains, there is a logic that is complete w.r.t. it. As we will see, it will be the case only for finite NM-chains: for the other chains we will have undecidability results (the set of first-order tautologies will be not recursively axiomatizable) and one open problem.

From \cite[Theorem 3]{gis} we can state
\begin{theorem}\label{teo:lognmfin}
For every integer $n\geq 1$
\begin{itemize}
\item Let LNM$_{2n}$ be the logic obtained from NM with the axioms $S_n(x_0,\dots,x_n)$ and $BP(x)$ (see \Cref{teo:fixpointeq}). Then LNM$_{2n}$ is complete w.r.t. NM$_{2n}$. 
\item Let LNM$_{2n+1}$ be the logic obtained from NM with the axiom $S_n(x_0,\dots,x_n)$. Then LNM$_{2n+1}$ is complete w.r.t. NM$_{2n+1}$.
\end{itemize}
\end{theorem}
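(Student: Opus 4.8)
The plan is to derive Theorem~\ref{teo:lognmfin} from \cite[Theorem~3]{gis}, with \Cref{teo:fixpointeq} used to match the cardinality conditions occurring there to the axioms $S_n(x_0,\dots,x_n)$ and $BP(x)$, and with the embedding constructions of the present section used to collapse a family of chains to a single one. First I would recall that, as axiomatic extensions of MTL, both LNM$_{2n}$ and LNM$_{2n+1}$ are algebraizable (\cite{bp,nog}); their equivalent algebraic semantics are the subvarieties $\mathbb{V}_{2n}$ and $\mathbb{V}_{2n+1}$ of NM-algebras obtained by adding to the equations of NM the algebraic counterparts of $S_n(x_0,\dots,x_n)$ and $BP(x)$, respectively of $S_n(x_0,\dots,x_n)$ alone. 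By \Cref{teo:fixpointeq}, an NM-chain lies in $\mathbb{V}_{2n}$ iff it has at most $2n+1$ elements and contains no negation fixpoint, and since a finite NM-chain has a negation fixpoint exactly when its cardinality is odd, the chains of $\mathbb{V}_{2n}$ are, up to isomorphism, precisely $NM_2,NM_4,\dots,NM_{2n}$; in the same way the chains of $\mathbb{V}_{2n+1}$ are precisely $NM_2,NM_3,\dots,NM_{2n+1}$.

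The second step is to reduce each of these finite families to its largest member. For $\mathbb{V}_{2n}$ it suffices that every $NM_{2k}$ with $1<2k\le 2n$ embeds, as an NM-algebra, into $NM_{2n}$: this is exactly the embedding built in this section in the proof that $TAUT_{NM_{2n}}\forall\subset TAUT_{NM_{2k}}\forall$ for chains of equal parity. For $\mathbb{V}_{2n+1}$ one must in addition send the even chains into the odd chain $NM_{2n+1}$; here I would compose the embedding $NM_{2k}\hookrightarrow NM_{2k+1}$ underlying the corollary that $TAUT_{NM_{n+1}}\forall\subset TAUT_{NM_n}\forall$ for even $n$ with the equal-parity embedding $NM_{2k+1}\hookrightarrow NM_{2n+1}$. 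Since all chains involved are finite, hence complete lattices, each such embedding automatically preserves all infima and suprema, so it carries propositional evaluations and first-order safe models of one chain to the next; consequently any counter-model to a formula over a smaller chain of the family is pushed forward to a counter-model over $NM_{2n}$ (resp.\ $NM_{2n+1}$), and completeness with respect to the whole family is equivalent to completeness with respect to that single chain.

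Finally, I would invoke \cite[Theorem~3]{gis}, which states precisely that LNM$_{2n}$ is complete with respect to $\mathbb{V}_{2n}$ and LNM$_{2n+1}$ with respect to $\mathbb{V}_{2n+1}$; as every variety of MTL-algebras is generated by its chains, which in the case at hand are the finite families above, the second step turns this into completeness with respect to the single chain $NM_{2n}$ (resp.\ $NM_{2n+1}$). Soundness in the converse direction is immediate, since $NM_{2n}$ and $NM_{2n+1}$ satisfy the added axioms by \Cref{teo:fixpointeq}. The only genuinely non-routine input is \cite[Theorem~3]{gis} itself; granted that, the rest is bookkeeping with \Cref{teo:fixpointeq} and the embeddings already in hand. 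I would also remark that running the same argument through the first-order chain-completeness theorem for axiomatic extensions of MTL (the analogue of the completeness theorem \cite[Theorem~7]{mtl} used in the proof of \Cref{teo:incstd}) in place of the propositional one yields that LNM$_{2n}\forall$ is complete with respect to $NM_{2n}$ and LNM$_{2n+1}\forall$ with respect to $NM_{2n+1}$, which is the form in which this result will be used below.
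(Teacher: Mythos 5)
The paper gives no proof of this theorem at all — it presents the statement as an immediate restatement of \cite[Theorem 3]{gis} — and your argument likewise pivots on that same external result, so the two approaches coincide in the only non-routine step. The additional bookkeeping you supply (algebraizability, the identification of the chains of each subvariety via \Cref{teo:fixpointeq} together with the parity criterion for the existence of a fixpoint, and the embeddings of the smaller finite chains into $NM_{2n}$, resp.\ $NM_{2n+1}$, to pass from variety-completeness to single-chain completeness) is correct and merely fills in detail the paper leaves implicit.
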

As regards to the first-order version of these logics, we have
\begin{theorem}
For each integer $n>1$ and each $NM\forall$ formula $\varphi$,
\begin{equation*}
LNM_n\forall\vdash\varphi\qquad\text{iff}\qquad NM_n\models\varphi
\end{equation*}
\end{theorem}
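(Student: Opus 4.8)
The argument splits into soundness, $LNM_n\forall\vdash\varphi\Rightarrow NM_n\models\varphi$, and completeness, $NM_n\models\varphi\Rightarrow LNM_n\forall\vdash\varphi$; only the second direction carries any weight.

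\emph{Soundness.} It suffices to check that each axiom of $LNM_n\forall$ is valid in every $NM_n$-interpretation (all such interpretations are safe, as $NM_n$ is finite) and that Modus Ponens and Generalization preserve $NM_n$-validity; the latter is immediate. The propositional axioms of $LNM_n$, and all their substitution instances by first-order formulas, are $NM_n$-tautologies: by the propositional completeness in \Cref{teo:lognmfin} these schemata are valid in the chain $NM_n$, i.e.\ they evaluate to $1$ under every assignment of elements of $NM_n$ to their propositional variables, in particular under the assignments determined by first-order subformulas. The quantifier axioms $(\forall 1)$, $(\exists 1)$, $(\forall 2)$, $(\exists 2)$, $(\forall 3)$ are valid in every linearly ordered MTL-algebra whose lattice reduct is complete, which $NM_n$ is. Hence $\vdash$ implies $\models$.

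\emph{Completeness.} Since $LNM_n$ is obtained from NM, hence from MTL, by adding propositional axiom schemata, the chain-completeness theorem for first-order logics over MTL applies verbatim (this is the same tool used in the proof of \Cref{teo:incstd}; see \cite[theorem 7]{mtl} and \cite[Theorem 5.3.3]{haj}), so $LNM_n\forall$ is complete with respect to the class of all $LNM_n$-chains. It therefore suffices to prove that $NM_n\models\varphi$ entails $\mathcal{A}\models\varphi$ for every $LNM_n$-chain $\mathcal{A}$. Write $n=2k$ or $n=2k+1$. Every $LNM_n$-chain $\mathcal{A}$ satisfies $S_k(x_0,\dots,x_k)$, so by \Cref{teo:fixpointeq} it has fewer than $2k+2$ elements and is in particular finite; if $n$ is even it also satisfies $BP(x)$, hence has no negation fixpoint and thus even cardinality. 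Since two finite NM-chains of equal cardinality are isomorphic, $\mathcal{A}\cong NM_j$ for some $j\le n$ (with $j$ even when $n$ is even). Inspecting the embedding constructions already used in this section yields an embedding $\phi\colon\mathcal{A}\hookrightarrow NM_n$ (for $j$ and $n$ of the same parity these are literally the maps built above; when $j$ is even and $n$ odd one sends $NM_j$ into the positive and negative parts of $NM_n$, avoiding its fixpoint). Being an embedding between finite chains, $\phi$ is an order-embedding and hence preserves all infima and suprema.

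The transfer of countermodels is now exactly as in \Cref{prop:inc}: if $\mathbf{M}$ is an (automatically safe) $\mathcal{A}$-model, $v$ an evaluation and $\Vert\varphi\Vert_{\mathbf{M},v}^{\mathcal{A}}=\beta<1$, replace each interpretation $r_P$ by $\phi\circ r_P$ to obtain an $NM_n$-model $\mathbf{M}'$; since $\phi$ commutes with all connectives and with the infima and suprema interpreting $\forall$ and $\exists$, one gets $\Vert\varphi\Vert_{\mathbf{M}',v}^{NM_n}=\phi(\beta)$, and $\phi(\beta)\neq 1$ because $\phi$ is injective and maps the top element to the top element, contradicting $NM_n\models\varphi$. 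Hence $\mathcal{A}\models\varphi$ for every $LNM_n$-chain $\mathcal{A}$, and the chain-completeness theorem gives $LNM_n\forall\vdash\varphi$. The only ingredient that is not a direct calculation is the appeal to the general first-order chain-completeness theorem for axiomatic extensions of MTL; the finiteness of the $LNM_n$-chains is read off \Cref{teo:fixpointeq}, and the countermodel transfer is the argument of \Cref{prop:inc}.
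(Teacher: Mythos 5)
Your proof is correct and follows essentially the same route as the paper's: soundness via the (soundness half of the) chain-completeness theorem for axiomatic extensions of MTL$\forall$, and completeness by showing every $LNM_n$-chain is finite with at most $n$ elements (via \Cref{teo:fixpointeq}), embeds into $NM_n$ preserving all $\inf$ and $\sup$, and then transferring countermodels as in \Cref{prop:inc}. You merely spell out the cardinality/parity bookkeeping and the countermodel transfer that the paper leaves as "easy to see."
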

\begin{proof}
The soundness follows from the chain-completeness for axiomatic extensions of MTL$\forall$ (see \cite{mtl}).

For the completeness, note that each LNM$_n$-chain has at most $n$ elements (this follows from the axiomatization of LNM$_n$ and \Cref{teo:fixpointeq}). Moreover, it easy to see that every LNM$_n$-chain embeds into NM$_n$ preserving all $\inf$ and $\sup$.
To conclude, from chain completeness theorems and the previous results we have that if $LNM_n\forall\nvdash\varphi$, then $NM_n\not\models\varphi$.
\end{proof}
For the case of the infinite NM-chains, we need some other machinery. 

We now introduce a translation $^*$ between first-order formulas, and we will show that, given a Gödel-chain $\mathcal{A}$ and a formula $\varphi$,  $\mathcal{A}\models\varphi$ if and only if $\mathcal{A}_\text{NM}\models\varphi^*$. This fact will be fundamental to show some undecidability results, for some of the infinite NM-chains discussed in this paper. For one of them ($NM{'}^-_\infty$), however, the decidability remains an open problem.
\begin{definition}\label{def:trad}	
Let $\varphi$ be a formula. We define $\varphi^*$, inductively, as follows:
\begin{itemize}
\item If $\varphi$ is atomic, then $\varphi^*\df\varphi^2$.
\item If $\varphi\df\bot$, then $\varphi^*\df\bot$.
\item If $\varphi\df\psi\land\chi$, then $\varphi^*\df\psi^*\land\chi^*$.
\item If $\varphi\df\psi\&\chi$, then $\varphi^*\df\psi^*\&\chi^*$.
\item If $\varphi\df\psi\rightarrow\chi$, then $\varphi^*\df(\psi^*\rightarrow\chi^*)^2$.
\item If $\varphi\df(\forall x)\chi$, then $\varphi^*\df((\forall x)\chi^*)^2$.
\end{itemize}
\end{definition}	
\begin{lemma}\label{lem:m+}
Let $\varphi, \mathcal{A}, \mathbf{M}=\lag M, \lag m_c\rog_{c\in\mathbf{C}},\lag r_P\rog_{P\in\mathbf{P}}\rog$ be a formula, an NM-chain (call $\mathcal{A}^+$ the set of its positive elements) and a safe $\mathcal{A}$-model. Construct an $\mathcal{A}$-model $\mathbf{M}^+=\lag M, \lag m_c\rog_{c\in\mathbf{C}},\lag r'_P\rog_{P\in\mathbf{P}}\rog$ such that, for every evaluation $v$ and atomic formula $\psi$
\begin{align*}
\Vert\psi\Vert^\mathcal{A}_{\mathbf{M}^+,v}=\begin{cases}
\Vert\psi\Vert^\mathcal{A}_{\mathbf{M},v}&\text{if }\Vert\psi\Vert^\mathcal{A}_{\mathbf{M},v}\in\mathcal{A}^+\\
0&\text{otherwise}.
\end{cases}
\end{align*}
Then $\Vert\varphi^*\Vert^\mathcal{A}_{\mathbf{M},v}=\Vert\varphi^*\Vert^\mathcal{A}_{\mathbf{M}^+,v}$, for every $v$.
\end{lemma}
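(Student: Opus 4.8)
The plan is to establish, by structural induction on $\varphi$, the statement that for every evaluation $v$ the value $\Vert\varphi^*\Vert^{\mathcal{A}}_{\mathbf{M}^+,v}$ is defined and equals $\Vert\varphi^*\Vert^{\mathcal{A}}_{\mathbf{M},v}$; this way the safety of $\mathbf{M}^+$ with respect to the translated formulas (needed for the right-hand side of the claimed identity to make sense) is obtained along the way rather than assumed. Since $\exists$, $\vee$, $\neg$, $\top$ and $\leftrightarrow$ are definable from the primitive connectives $\{\land,\&,\to,\bot\}$ together with $\forall$ (in particular $(\exists x)\chi$ unfolds to $\neg(\forall x)\neg\chi$), it suffices to run the induction over formulas built from these primitives, which are precisely the clauses of \Cref{def:trad}.

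The one genuinely non-formal ingredient is an observation on the square operation of an NM-chain. From the description of $*$ recalled in \Cref{subsec:sem}, every $a\in A$ satisfies $a*a=0$ if $a\le n(a)$ and $a*a=\min(a,a)=a$ if $a>n(a)$; equivalently, writing $a^+\df a$ when $a\in\mathcal{A}^+$ and $a^+\df 0$ otherwise, we have $a^2=(a^+)^2$ for every $a\in A$ (both sides equal $a$ if $a$ is positive and equal $0$ otherwise), and in particular $a^2\in\mathcal{A}^+\cup\{0\}$. I would record this first.

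The base case is then immediate: if $\varphi$ is atomic then $\varphi^*=\varphi\&\varphi$, and since the construction of $\mathbf{M}^+$ gives $\Vert\varphi\Vert^{\mathcal{A}}_{\mathbf{M}^+,v}=(\Vert\varphi\Vert^{\mathcal{A}}_{\mathbf{M},v})^+$, the algebraic fact yields $\Vert\varphi^*\Vert^{\mathcal{A}}_{\mathbf{M}^+,v}=((\Vert\varphi\Vert^{\mathcal{A}}_{\mathbf{M},v})^+)^2=(\Vert\varphi\Vert^{\mathcal{A}}_{\mathbf{M},v})^2=\Vert\varphi^*\Vert^{\mathcal{A}}_{\mathbf{M},v}$; the case $\varphi\df\bot$ is trivial. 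The steps for $\land$, $\&$ and $\to$ are purely formal: the translation distributes over $\land$ and $\&$ and turns $\to$ into $(\cdot\to\cdot)^2$, so each of $\Vert(\psi\land\chi)^*\Vert$, $\Vert(\psi\&\chi)^*\Vert$, $\Vert(\psi\to\chi)^*\Vert$ is an explicit function of $\Vert\psi^*\Vert$ and $\Vert\chi^*\Vert$ (through $\sqcap$, $*$, and $(\cdot\Rightarrow\cdot)^2$ respectively), and the induction hypothesis lets one replace these by their $\mathbf{M}^+$-counterparts. For $\varphi\df(\forall x)\chi$ we have $\varphi^*=((\forall x)\chi^*)^2$; the induction hypothesis applied to $\chi$ gives $\Vert\chi^*\Vert_{\mathbf{M},v'}=\Vert\chi^*\Vert_{\mathbf{M}^+,v'}$ for every $v'\equiv_x v$, so the two families of truth values coincide as sets, hence so do their infima (the one in $\mathbf{M}^+$ existing because it is the same set of values as in the safe model $\mathbf{M}$), and squaring both sides preserves the equality.

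The only point requiring attention beyond the algebraic fact is exactly this safety bookkeeping in the quantifier step --- that evaluating $\varphi^*$ over $\mathbf{M}^+$ never calls for an infimum that does not exist --- which the strengthened inductive statement disposes of. I do not anticipate any further obstacle.
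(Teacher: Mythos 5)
Your proof is correct and follows essentially the same route as the paper's: structural induction over the primitive connectives and $\forall$, with the atomic case resting on the observation that squaring kills all non-positive elements (so $a^2=(a^+)^2$), and the remaining cases being purely compositional. The paper's own proof is just a terser version of this; your explicit algebraic fact for the base case and the safety bookkeeping in the quantifier step are details the paper leaves implicit, not a different argument.
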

\begin{proof}
By structural induction over $\varphi$: if $\varphi\df\bot$ the claim is immediate. If $\varphi$ is atomic, then $\varphi^*\df\varphi^2$ and the claim easily follows from the definition of $\mathbf{M}^+$.

If $\varphi\df\psi\circ\chi$, with $\circ\in\{\land, \&, \rightarrow\}$, then the claim follows from the induction hypothesis over $\psi$ and $\chi$.

Finally, if $\varphi\df(\forall x)\chi$, then by the induction hypothesis $\Vert\chi^*\Vert^\mathcal{A}_{\mathbf{M},w}=\Vert\chi^*\Vert^\mathcal{A}_{\mathbf{M}^+,w}$, for every $w\equiv_x v$ and hence $\Vert\varphi^*\Vert^\mathcal{A}_{\mathbf{M},v}=\Vert\varphi^*\Vert^\mathcal{A}_{\mathbf{M}^+,v}$.
\end{proof}
\begin{theorem}\label{teo:mfix}
Let $\varphi$ be a formula and $\mathcal{A}$ be an NM-chain.
\begin{enumerate}
\item\label{m+eq} $\mathcal{A}\models\varphi^*$ iff $\Vert\varphi^*\Vert^\mathcal{A}_{\mathbf{M}^+,v}$, for every safe $\mathcal{A}$-model $\mathbf{M}$ and evaluation $v$.
\item Let $\mathcal{B}$ be a \emph{complete} NM-chain without negation fixpoint: call $\mathcal{B}^f$ its version with negation fixpoint $f$. It holds that
\begin{equation*}
\mathcal{B}\models\varphi^*\quad\text{iff}\quad \mathcal{B}^f\models\varphi^*.
\end{equation*}
\end{enumerate}
\end{theorem}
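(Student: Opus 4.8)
The plan is to read both items off \Cref{lem:m+}, exploiting the fact that the translation~$^*$ collapses truth values onto the set $\{0\}\cup\mathcal{A}^+$ of non-negative elements of an NM-chain $\mathcal{A}$: since $x*x=0$ whenever $x\le n(x)$ and $x*x=x$ otherwise, one has $x^2\in\{0\}\cup\mathcal{A}^+$ for every $x$, and because each $\varphi^*$ is either $\bot$, a square, or obtained by $\land$ and $\&$ from formulas of that shape (and $\{0\}\cup\mathcal{A}^+$ is closed under $\min$ and $*$), a one-line induction gives $\Vert\varphi^*\Vert^{\mathcal{A}}_{\mathbf{M},v}\in\{0\}\cup\mathcal{A}^+$ for every formula $\varphi$, safe $\mathcal{A}$-model $\mathbf{M}$ and evaluation $v$.

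For the first item, \Cref{lem:m+} already gives $\Vert\varphi^*\Vert^{\mathcal{A}}_{\mathbf{M},v}=\Vert\varphi^*\Vert^{\mathcal{A}}_{\mathbf{M}^+,v}$ for every safe $\mathbf{M}$ and every $v$. Hence if $\mathcal{A}\models\varphi^*$ then $\Vert\varphi^*\Vert^{\mathcal{A}}_{\mathbf{M}^+,v}=\Vert\varphi^*\Vert^{\mathcal{A}}_{\mathbf{M},v}=1$; conversely, given an arbitrary safe $\mathcal{A}$-model $\mathbf{N}$ and evaluation $v$, applying the hypothesis with $\mathbf{M}=\mathbf{N}$ yields $\Vert\varphi^*\Vert^{\mathcal{A}}_{\mathbf{N}^+,v}=1$, and \Cref{lem:m+} transports this back to $\Vert\varphi^*\Vert^{\mathcal{A}}_{\mathbf{N},v}=1$, so $\mathcal{A}\models\varphi^*$.

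For the second item, I would first record that $\mathcal{B}^f$ is again a complete lattice — it is $\mathcal{B}$ with a single extra element $f$ inserted strictly between the positive and the negative elements, and the infimum of any subset of $\mathcal{B}^f$ either coincides with the one computed in $\mathcal{B}$ or equals $f$ (the latter exactly when the subset lies in $\mathcal{B}^+$ and has no lower bound inside $\mathcal{B}^+$) — so that every $\mathcal{B}$-model and every $\mathcal{B}^f$-model is safe. By the first item, testing $\mathcal{B}\models\varphi^*$ (resp.\ $\mathcal{B}^f\models\varphi^*$) amounts to testing $\Vert\varphi^*\Vert_{\mathbf{N},v}=1$ only over models $\mathbf{N}$ whose atomic values all lie in $\{0\}\cup\mathcal{B}^+=\{0\}\cup(\mathcal{B}^f)^+$; such an $\mathbf{N}$ is simultaneously a $\mathcal{B}$-model and a $\mathcal{B}^f$-model, since $\mathcal{B}$ is a subalgebra of $\mathcal{B}^f$ and the two carry the same order, negation, monoid operation and residuum on $\mathcal{B}$. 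It therefore suffices to prove, by structural induction on $\psi$ over the formulas to which $^*$ applies ($\exists$ and the derived connectives being eliminated first via their definitions, cf.\ \cite{pred}), that $\Vert\psi^*\Vert^{\mathcal{B}}_{\mathbf{N},v}=\Vert\psi^*\Vert^{\mathcal{B}^f}_{\mathbf{N},v}$ (a value necessarily in $\{0\}\cup\mathcal{B}^+$) for every such $\mathbf{N}$ and every $v$. The cases $\bot$, atomic, $\psi\land\chi$, $\psi\,\&\,\chi$ and $\psi\to\chi$ are mechanical: all the operations occurring ($\min$, $*$, $\Rightarrow$, $n$ and $x\mapsto x^2$) agree on $\{0\}\cup\mathcal{B}^+$ and, after the squaring prescribed by $^*$, land back in $\{0\}\cup\mathcal{B}^+$.

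The only delicate case, and the heart of the matter, is $\varphi^*=((\forall x)\chi^*)^2$. By the induction hypothesis the set $W=\{\Vert\chi^*\Vert_{\mathbf{N},v'}:v'\equiv_x v\}\subseteq\{0\}\cup\mathcal{B}^+$ is the same computed in $\mathcal{B}$ or in $\mathcal{B}^f$, and one must check $(\inf_\mathcal{B}W)^2=(\inf_{\mathcal{B}^f}W)^2$. If $W$ has a lower bound in $\mathcal{B}^+$, its greatest one $p$ is the infimum of $W$ both in $\mathcal{B}$ and in $\mathcal{B}^f$, and $p^2=p\in\mathcal{B}^+$. Otherwise either $0\in W$, and $\inf_\mathcal{B}W=\inf_{\mathcal{B}^f}W=0$, or $W\subseteq\mathcal{B}^+$ with no positive lower bound, in which case $\inf_\mathcal{B}W$ is a negative element of $\mathcal{B}$ (or $0$) while $f$ is a lower bound of $W$ in $\mathcal{B}^f$, indeed the greatest one, so that $\inf_{\mathcal{B}^f}W=f$: here the two infima genuinely disagree, but both square to $0$ because $x^2=0$ for every $x\le n(x)$. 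The outer square thus absorbs the discrepancy, and this is precisely the obstacle one has to anticipate — recognising that the squarings built into $^*$ flatten each chain onto $\{0\}\cup\mathcal{A}^+$, a set on which the negation fixpoint is indistinguishable from $0$; everything else is routine verification with the NM operations.
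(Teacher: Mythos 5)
Your proof is correct and follows essentially the same route as the paper: both reduce the problem, via \Cref{lem:m+}, to models whose atomic values lie in $\{0\}\cup\mathcal{B}^+$ (which are simultaneously $\mathcal{B}$- and $\mathcal{B}^f$-models) and then run a structural induction showing that the two computations of $\Vert\varphi^*\Vert$ coincide. The only, harmless, difference is in the quantifier case: the paper uses completeness of $\mathcal{B}$ to show that no subformula ever evaluates to $f$ (in fact, for a complete NM-chain without fixpoint the set $\mathcal{B}^+$ has a minimum, so your ``genuine disagreement'' subcase where $\inf_{\mathcal{B}}W$ is negative while $\inf_{\mathcal{B}^f}W=f$ is vacuous), whereas you allow the un-squared infimum to differ between the two chains and observe that the squaring built into $^*$ collapses both values to $0$.
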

\begin{proof}
\begin{enumerate}
\item Immediate from \Cref{lem:m+}.
\item Due to \ref{m+eq} it is enough to check that $\Vert\psi\Vert^{\mathcal{B}^f}_{\mathbf{M}^+,v}\neq f$, for every formula $\psi$ and every $\mathcal{A}$-model $\mathbf{M}$ and evaluation $v$. This can be done by induction over $\psi$.
\begin{itemize}
\item If $\psi$ is atomic or $\bot$ the claim is immediate.
\item If $\psi\df\theta\circ\chi$, with $\circ\in\{\land, \&, \rightarrow\}$, then the claim follows from the induction hypothesis over $\theta$ and $\chi$.
\item Finally, if $\psi\df(\forall x)\chi$, then by the induction hypothesis $\Vert\chi\Vert^{\mathcal{B}^f}_{\mathbf{M},w}\neq f$, for every $w\equiv_x v$: if $\Vert\chi\Vert^{\mathcal{B}^f}_{\mathbf{M},w} < f$, for some $w\equiv_x v$, then $\Vert(\forall x)\chi\Vert^{\mathcal{B}^f}_{\mathbf{M},v}< f$.

    Suppose that $\Vert\chi\Vert^{\mathcal{B}^f}_{\mathbf{M},w}> f$, for every $w\equiv_x v$: moreover, by contradiction, assume that $\Vert(\forall x)\chi\Vert^{\mathcal{B}^f}_{\mathbf{M},v}= \inf_{w\equiv_x v}\{\Vert\chi\Vert^{\mathcal{B}^f}_{\mathbf{M},w}\}= f$. This means that the set of positive elements of $\mathcal{B}$ does not have infimum, in contrast with the hypothesis that $\mathcal{B}$ is complete.
\end{itemize}
\end{enumerate}
\end{proof}
\begin{theorem}\label{teo:tradgnm}
Let $\varphi$ be a formula, and $\mathcal{A}$ be a Gödel chain. Consider a safe $\mathcal{A}$-model $\mathbf{M}=\lag M, \lag m_c\rog_{c\in\mathbf{C}}, \lag r_P\rog_{P\in\mathbf{P}}\rog$: construct an $\mathcal{A}_\text{NM}$-model $\mathbf{M}'=\lag M, \lag m_c\rog_{c\in\mathbf{C}}, \lag r'_P\rog_{P\in\mathbf{P}}\rog$ such that, for every evaluation $v$ and \emph{atomic} formula $\psi$
\begin{equation*}
\Vert\psi\Vert_{\mathbf{M},v}^\mathcal{A}=\Vert\psi\Vert_{\mathbf{M}',v}^{\mathcal{A}_\text{NM}}.
\end{equation*}
Then for every evaluation $v$ we have
\begin{equation*}
\Vert\varphi\Vert_{\mathbf{M},v}^\mathcal{A}=\Vert\varphi^*\Vert_{\mathbf{M}',v}^{\mathcal{A}_\text{NM}}.
\end{equation*}
\end{theorem}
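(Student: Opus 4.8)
The plan is a structural induction on $\varphi$, after first making explicit the identification implicit in the statement. By \Cref{def:rot} and \Cref{rem:nmcomp}, the subset $B\cup\{0\}$ of $A_\text{NM}$ consisting of the positive elements together with the bottom $0=1'$ is order-isomorphic to $\mathcal{A}$ via the map $\kappa$ that sends $0_\mathcal{A}\mapsto 0$, $1_\mathcal{A}\mapsto 1$, and elsewhere is the canonical identification of $A\setminus\{0\}$ with $B$; note that $\kappa$ is an order- but not an algebra-isomorphism. With this reading, the hypothesis on $\mathbf{M}'$ says $\Vert\psi\Vert^{\mathcal{A}_\text{NM}}_{\mathbf{M}',v}=\kappa\bigl(\Vert\psi\Vert^{\mathcal{A}}_{\mathbf{M},v}\bigr)$ for atomic $\psi$, and I would prove the strengthened statement that for every $\varphi$ and $v$ the value $\Vert\varphi^*\Vert^{\mathcal{A}_\text{NM}}_{\mathbf{M}',v}$ exists, lies in $B\cup\{0\}$, and equals $\kappa\bigl(\Vert\varphi\Vert^{\mathcal{A}}_{\mathbf{M},v}\bigr)$; this simultaneously shows that $\mathbf{M}'$ is safe.

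The engine of the induction is the remark that on $B\cup\{0\}$ the NM-operations $\sqcap$ and $*$ both coincide with Gödel's $\min$, while for $a,b\in B\cup\{0\}$ the composite $(a\Rightarrow b)^2$ coincides with Gödel's residuum: if $a\le b$ it is $1$; if $a>b$ and $b\in B$ it is $b\in B$; and if $a>b$ and $b=0$ then $a\Rightarrow b=n(a)\le f$, so its square is $0$. Thus $\langle B\cup\{0\},\sqcap,*,(\cdot\Rightarrow\cdot)^2\rangle$ is an isomorphic copy of $\mathcal{A}$ under $\kappa$, and \Cref{def:trad} is designed precisely so that evaluating $\varphi^*$ in $\mathcal{A}_\text{NM}$ amounts to evaluating $\varphi$ through this copy. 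The base cases ($\varphi=\bot$; $\varphi$ atomic, where $\varphi^*=\varphi^2$ and squaring an element of $B\cup\{0\}$ is either harmless or gives $0$) and the cases $\psi\land\chi$, $\psi\&\chi$, $\psi\to\chi$ then reduce to these identities together with the inductive fact that every starred subformula already evaluates into $B\cup\{0\}$.

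The one delicate case is $\varphi\df(\forall x)\chi$, where $\varphi^*=((\forall x)\chi^*)^2$. Put $S\df\{\Vert\chi\Vert^{\mathcal{A}}_{\mathbf{M},w}:w\equiv_x v\}$; by the induction hypothesis $\{\Vert\chi^*\Vert^{\mathcal{A}_\text{NM}}_{\mathbf{M}',w}:w\equiv_x v\}=\kappa(S)$, and $m\df\inf_\mathcal{A}S=\Vert(\forall x)\chi\Vert^{\mathcal{A}}_{\mathbf{M},v}$ exists because $\mathbf{M}$ is safe. I would then prove the sub-claim $\bigl(\inf_{\mathcal{A}_\text{NM}}\kappa(S)\bigr)^2=\kappa(m)$, the infimum existing in $\mathcal{A}_\text{NM}$. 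If $m>0$ then $\kappa(m)\in B$ is already the infimum of $\kappa(S)$ in $\mathcal{A}_\text{NM}$ --- any positive lower bound of $\kappa(S)$ pulls back through $\kappa$ to a lower bound of $S$, hence is $\le\kappa(m)$, while anything $\le f$ is strictly below $\kappa(m)$ --- and it is positive, so squaring fixes it. If $m=0$ and $0\in S$, the infimum of $\kappa(S)$ is $0$, with square $0$. If $m=0$ and $0\notin S$, then $\kappa(S)\subseteq B$ has no lower bound inside $B$ (such a bound would pull back to a positive lower bound of $S$, contradicting $m=0$), so its lower bounds are exactly the elements $\le f$, whence $\inf_{\mathcal{A}_\text{NM}}\kappa(S)=f$, whose square is $0=\kappa(m)$. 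This last subcase, where the infimum is the negation fixpoint and the outer square in the translation of $\forall$ is what collapses it to $0$, is the real obstacle; it is the model-side counterpart of \Cref{lem:m+} and \Cref{teo:mfix}. Collecting the cases completes the induction, and hence the theorem.
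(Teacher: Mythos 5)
Your proposal is correct and follows essentially the same route as the paper's proof: a structural induction in which every starred subformula is shown to evaluate to $0$ or a positive (idempotent) element, the squares in the translation repair the residuum and the universal quantifier, and the delicate subcase is the one where an infimum of positive elements equals the fixpoint $f$ and the outer square collapses it to $0$. Your version merely makes explicit the identification $\kappa$ and the safety of $\mathbf{M}'$, which the paper leaves implicit.
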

\begin{proof}
By structural induction over $\varphi$.
\begin{itemize}
\item If $\varphi$ is $\bot$ or atomic, then the claim is immediate.
\item $\varphi\df\psi\circ\chi$, with $\circ\in\{\land,\&\}$ and the claim holds for $\psi$ and $\chi$. It follows that $\Vert\theta\Vert_{\mathbf{M},v}^\mathcal{A}=\Vert\theta^*\Vert_{\mathbf{M}',v}^{\mathcal{A}_\text{NM}}$, for every $v$ and with $\theta\in\{\psi,\chi\}$: noting that these values are $0$ or idempotent elements the claim follows.
\item $\varphi\df\psi\rightarrow\chi$ and the claim holds for $\psi$ and $\chi$: it follows that $\Vert\theta\Vert_{\mathbf{M},v}^\mathcal{A}=\Vert\theta^*\Vert_{\mathbf{M}',v}^{\mathcal{A}_\text{NM}}$, for every $v$ and with $\theta\in\{\psi,\chi\}$. As previously noted, these values are idempotent elements or $0$. Since $\varphi^*\df(\psi^*\rightarrow\chi^*)^2$, an easy check shows that $\Vert\varphi\Vert_{\mathbf{M},v}^\mathcal{A}=\Vert\varphi^*\Vert_{\mathbf{M}',v}^{\mathcal{A}_\text{NM}}$, for every $v$.
\item $\varphi\df(\forall x)\psi$ and the claim holds for $\psi$. We have that $\Vert\psi\Vert_{\mathbf{M},w}^\mathcal{A}=\Vert\psi^*\Vert_{\mathbf{M}',w}^{\mathcal{A}_\text{NM}}$, for every $w$: if there is $w\equiv_x v$ such that $\Vert\psi\Vert_{\mathbf{M},w}^\mathcal{A}=0$, then the claim is immediate.

    Suppose that $\Vert\psi\Vert_{\mathbf{M},w}^\mathcal{A}>0$, for every $w\equiv_x v$.
    \subitem If $\Vert(\forall x)\psi\Vert_{\mathbf{M},v}^\mathcal{A}>0$, then $\Vert(\forall x)\psi\Vert_{\mathbf{M},v}^\mathcal{A}=\Vert(\forall x)\psi^*\Vert_{\mathbf{M}',v}^{\mathcal{A}_\text{NM}}=\Vert((\forall x)\psi^*)^2\Vert_{\mathbf{M}',v}^{\mathcal{A}_\text{NM}}=\Vert\varphi^*\Vert_{\mathbf{M}',v}^{\mathcal{A}_\text{NM}}$.
    \subitem If $\Vert(\forall x)\psi\Vert_{\mathbf{M},v}^\mathcal{A}=0$, then $\Vert(\forall x)\psi^*\Vert_{\mathbf{M}',v}^{\mathcal{A}_\text{NM}}=f$ and $\Vert((\forall x)\psi^*)^2\Vert_{\mathbf{M}',v}^{\mathcal{A}_\text{NM}}=\Vert\varphi^*\Vert_{\mathbf{M}',v}^{\mathcal{A}_\text{NM}}=0$.
\end{itemize}
\end{proof}
\begin{corollary}\label{cor:gnm}
Let $\varphi$ be a formula, $\mathcal{A}$ be a Gödel chain. We have that
\begin{equation*}
\mathcal{A}\models\varphi\quad\text{iff}\quad\mathcal{A}_\text{NM}\models\varphi^*.
\end{equation*}
\end{corollary}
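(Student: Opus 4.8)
The plan is to deduce \Cref{cor:gnm} from \Cref{teo:tradgnm} and the first part of \Cref{teo:mfix}, reading each side of the asserted equivalence as ``no countermodel exists'': $\mathcal{A}\not\models\varphi$ means that some safe $\mathcal{A}$-model $\mathbf{M}$ and evaluation $v$ give $\Vert\varphi\Vert^{\mathcal{A}}_{\mathbf{M},v}<1$, and likewise $\mathcal{A}_\text{NM}\not\models\varphi^{*}$ means that some safe $\mathcal{A}_\text{NM}$-model together with an evaluation refutes $\varphi^{*}$. So it suffices to transport countermodels in each direction, and I would prove both implications by contraposition.

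For the implication ``$\mathcal{A}_\text{NM}\models\varphi^{*}$ implies $\mathcal{A}\models\varphi$'' almost nothing beyond \Cref{teo:tradgnm} is needed: starting from a safe $\mathcal{A}$-model $\mathbf{M}$ and $v$ with $\Vert\varphi\Vert^{\mathcal{A}}_{\mathbf{M},v}<1$, \Cref{teo:tradgnm} returns an $\mathcal{A}_\text{NM}$-model $\mathbf{M}'$ with $\Vert\varphi^{*}\Vert^{\mathcal{A}_\text{NM}}_{\mathbf{M}',v}=\Vert\varphi\Vert^{\mathcal{A}}_{\mathbf{M},v}$; since the order-embedding of $\mathcal{A}$ into $\mathcal{A}_\text{NM}$ fixed by \Cref{def:rot} and \Cref{rem:nmcomp} sends $1$ to $1$ and, being injective, nothing else to $1$, the right-hand value is still $<1$, so $\mathbf{M}'$ refutes $\varphi^{*}$ over $\mathcal{A}_\text{NM}$.

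The interesting direction is ``$\mathcal{A}\models\varphi$ implies $\mathcal{A}_\text{NM}\models\varphi^{*}$''. A countermodel of $\varphi^{*}$ over $\mathcal{A}_\text{NM}$ need not come from a G\"odel model, since its atomic interpretations may take negative values and the value $f$, which have no analogue in $\mathcal{A}$; the idea is first to squeeze such a countermodel into ``positive shape''. If $\Vert\varphi^{*}\Vert^{\mathcal{A}_\text{NM}}_{\mathbf{N},v}<1$ for some safe $\mathbf{N}$, then, applying the first part of \Cref{teo:mfix} to the NM-chain $\mathcal{A}_\text{NM}$, the model $\mathbf{N}^{+}$ still satisfies $\Vert\varphi^{*}\Vert^{\mathcal{A}_\text{NM}}_{\mathbf{N}^{+},v}<1$, and $\mathbf{N}^{+}$ assigns atomic formulas only values in $(\mathcal{A}_\text{NM})^{+}\cup\{0\}$, which is exactly $B\cup\{0\}$. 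This set is order-isomorphic to the universe of $\mathcal{A}$ (identity on the positive part $B=A\setminus\{0\}$, and $0\mapsto 0_{\mathcal{A}}$); transporting the atomic interpretations of $\mathbf{N}^{+}$ along this isomorphism defines an $\mathcal{A}$-model $\mathbf{M}$. Feeding $\mathbf{M}$ into \Cref{teo:tradgnm} yields $\mathbf{M}'$, and inspecting the two constructions one checks $(\mathbf{M}')^{+}=\mathbf{N}^{+}$ (positive atomic values are untouched; where $\mathbf{N}^{+}$ is $0$, $\mathbf{M}'$ is $f$, which $(\,\cdot\,)^{+}$ sends back to $0$). Chaining \Cref{teo:tradgnm} with \Cref{lem:m+} then gives
\[
\Vert\varphi\Vert^{\mathcal{A}}_{\mathbf{M},v}=\Vert\varphi^{*}\Vert^{\mathcal{A}_\text{NM}}_{\mathbf{M}',v}=\Vert\varphi^{*}\Vert^{\mathcal{A}_\text{NM}}_{(\mathbf{M}')^{+},v}=\Vert\varphi^{*}\Vert^{\mathcal{A}_\text{NM}}_{\mathbf{N}^{+},v}<1,
\]
so $\mathbf{M}$ refutes $\varphi$ over $\mathcal{A}$.

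The routine parts are the two model transports and the order-isomorphism bookkeeping. The step I expect to be the main obstacle lies in this last direction: making sure that the $\mathcal{A}$-model $\mathbf{M}$ built from $\mathbf{N}^{+}$ is \emph{safe} (so that \Cref{teo:tradgnm} applies to it and the hypothesis $\mathcal{A}\models\varphi$ can be invoked on it), and that the identity $(\mathbf{M}')^{+}=\mathbf{N}^{+}$ is a genuine equality of models rather than an equality only up to the relabelling of the G\"odel bottom as the negation fixpoint. This asymmetry between $0_{\mathcal{A}}$ and $f$ is precisely why the argument has to be routed through \Cref{teo:mfix} instead of trying to invert \Cref{teo:tradgnm} directly.
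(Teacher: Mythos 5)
Your proposal is correct and follows essentially the same route as the paper, whose proof of \Cref{cor:gnm} consists precisely of citing \Cref{teo:mfix} and \Cref{teo:tradgnm}: \Cref{teo:tradgnm} transports a G\"odel countermodel to an NM countermodel of $\varphi^*$, and \Cref{teo:mfix}(1) (via \Cref{lem:m+}) positivizes an NM countermodel of $\varphi^*$ so it can be pulled back to a G\"odel model. Your explicit unpacking, including the observation that the $(\cdot)^+$ step is what makes the reverse transport possible, is exactly the intended argument (and your flagged safety concern for the transported model $\mathbf{M}$ is a point the paper itself glosses over, harmless in the paper's applications where the chains involved are complete).
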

\begin{proof}
An easy consequence of \Cref{teo:mfix,teo:tradgnm}.
\end{proof}
Recall that a subset of $[0,1]$ is complete if and only if it is compact with respect to the order topology (see for example \cite{ss}). Now, in \cite{fgod} it is showed that
\begin{theorem}[\cite{fgod}]\label{teo:godund}
Let $\mathcal{A}$ be a countable topologically closed subalgebra of $[0,1]_\text{G}$ (i.e. a countable complete subalgebra of $[0,1]_\text{G}$). Then $TAUT_\mathcal{A}\forall$ is not recursively enumerable.
\end{theorem}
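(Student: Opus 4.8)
The plan is to show that, for $\mathcal{A}$ as in the hypothesis, some known non--recursively--enumerable decision problem many--one reduces to membership in $TAUT_\mathcal{A}\forall$, so that the latter cannot be r.e. Concretely, I would reduce from the set of classical first--order sentences valid in \emph{all finite structures}: by Trakhtenbrot's theorem (in a signature with one binary predicate) this set is $\Pi^0_1$--complete, hence not r.e. So it suffices to attach, uniformly and computably, to each classical sentence $\varphi$ a first--order formula $\varphi^{\sharp}$ with
\[
\varphi^{\sharp}\in TAUT_\mathcal{A}\forall\qquad\text{iff}\qquad\varphi\text{ holds in every finite classical structure;}
\]
then $TAUT_\mathcal{A}\forall$ has a non--r.e. set $m$--reducible to it and is therefore not r.e.

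The reduction I propose has the form $\varphi^{\sharp}\df\mathrm{FIN}\to\varphi^{\circ}$. Here $\varphi^{\circ}$ is the \emph{crispification} of $\varphi$ (assuming w.l.o.g., computably and preserving finite satisfiability, that $\varphi$ uses a single binary predicate $R$): replace each atom $R(t,s)$ by $\neg\neg R(t,s)$. Since $\neg\neg R$ takes only the values $0,1$ in a G\"odel chain, a straightforward induction gives $\Vert\varphi^{\circ}\Vert^{\mathcal{A}}_{\mathbf{M}}\in\{0,1\}$ for every $\mathcal{A}$--model $\mathbf{M}$, equal to $1$ exactly when $\varphi$ is classically true in the structure $\mathbf{M}^{\mathrm{cl}}$ on the universe of $\mathbf{M}$ with $R$ read as $\{(a,b):\Vert R(a,b)\Vert_{\mathbf{M}}>0\}$. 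The conjunct $\mathrm{FIN}$ is a sentence in fresh predicate symbols (a unary $P$ and a binary $\prec$) with two properties: (i) over any $\mathcal{A}$--model with \emph{finite} universe one can interpret $P,\prec$ so that $\Vert\mathrm{FIN}\Vert^{\mathcal{A}}_{\mathbf{M}}=1$; (ii) over any $\mathcal{A}$--model with \emph{infinite} universe, $\Vert\mathrm{FIN}\Vert^{\mathcal{A}}_{\mathbf{M}}=0$ regardless of the interpretation of $P,\prec$. Granting $\mathrm{FIN}$, correctness is bookkeeping using the two--valuedness of $\varphi^{\circ}$: if $\varphi$ holds in all finite structures, then over an infinite--universe $\mathbf{M}$ clause (ii) makes the implication $1$, while over a finite--universe $\mathbf{M}$ we have $\mathbf{M}^{\mathrm{cl}}\models\varphi$, hence $\Vert\varphi^{\circ}\Vert_{\mathbf{M}}=1$ and again the implication is $1$, so $\varphi^{\sharp}\in TAUT_\mathcal{A}\forall$; conversely, if $\varphi$ fails in some finite $\mathbf{N}$, put an $\mathcal{A}$--model on the universe of $\mathbf{N}$ interpreting $R$ crisply after $\mathbf{N}$ and $P,\prec$ by (i), whence $\Vert\mathrm{FIN}\Vert_{\mathbf{M}}=1>0=\Vert\varphi^{\circ}\Vert_{\mathbf{M}}$, so $\Vert\varphi^{\sharp}\Vert_{\mathbf{M}}=0$ and $\varphi^{\sharp}\notin TAUT_\mathcal{A}\forall$.

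The hard step, and the one using the hypothesis, is the construction of $\mathrm{FIN}$. Being countable and topologically closed, $\mathcal{A}$ is \emph{scattered} (Cantor--Bendixson), so it possesses an accumulation point approached, discretely from one side, by a sequence of its own elements; this is what $\mathrm{FIN}$ must exploit. The design: force $\prec$ to be a crisp discrete linear order of the universe with a least element; force the measuring predicate $P$ to be (strictly) monotone along $\prec$, so $x\mapsto\Vert P(x)\Vert$ injects the universe into $\mathcal{A}$; and add a quantifier--shift conjunct of the $C_\uparrow$/$C_\downarrow$ type met in \ref{form:cu}--\ref{form:cd} demanding that a certain infimum (dually, supremum) of realized truth values be attained. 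An infinite universe is then compelled to realize an infinite monotone sequence of elements of $\mathcal{A}$; since $\mathcal{A}$ is closed it contains the limit, which is not a term of the sequence, so the attainment conjunct fails --- and the delicate part is to arrange that it fails with value $0$, by steering the forced sequence against the endpoint $0$ (automatic when the relevant accumulation point is $0$, and needing a dual treatment or a reflection onto a neighbourhood of $0$ otherwise). Over a finite universe one simply spreads the values of $P$ along a finite discrete block of $\mathcal{A}$ (which exists because $\mathcal{A}$ is infinite and scattered), and every conjunct evaluates to $1$.

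I expect the main obstacle to be exactly this encoding: expressing ``strict monotonicity'' and the pertinent discreteness in the weak G\"odel language, which cannot state strict inequalities directly, and forcing the attainment failure down to value $0$ --- this is where the detailed order type of a countable closed $\mathcal{A}$ has to be used, and where \cite{fgod} does its real work, with cases according to how $\mathcal{A}$ accumulates. It is also precisely the point at which the argument must break for $[0,1]_{\text{G}}$, whose value set is not scattered: there no formula can force finiteness of the universe, in accordance with the fact that $G\forall$ is recursively axiomatizable.
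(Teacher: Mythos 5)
The paper itself gives no proof of this theorem --- it is imported from \cite{fgod} --- so the comparison can only be with the argument there. Your strategy is the right one and is indeed the one used in \cite{fgod}: many--one reduce the $\Pi^0_1$-complete set of classical sentences valid in all finite structures (Trakhtenbrot) to $TAUT_\mathcal{A}\forall$ via $\varphi\mapsto\mathrm{FIN}\to\varphi^{\circ}$, where $\varphi^{\circ}$ is the $\neg\neg$-crispification; the pieces you actually carry out (two-valuedness of $\varphi^{\circ}$, its agreement with the classical truth value, and the bookkeeping reducing correctness to properties (i)--(ii) of $\mathrm{FIN}$) are fine.

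The problem is that $\mathrm{FIN}$, which carries the entire content of the theorem, is not constructed, and the specification you give for it cannot be met. You ask for a sentence in \emph{fresh} predicates $P,\prec$ taking value $1$ under some interpretation over every finite universe but value $0$ under every interpretation over every infinite universe. The language here has no equality, and truth values in chain-based semantics are invariant under inflation: replacing each element of a model by a nonempty set of clones carrying the same atomic values leaves every $\inf$ and $\sup$, hence every truth value, unchanged. So a finite model witnessing (i) inflates to an infinite model on which $\mathrm{FIN}$ still takes value $1$, refuting (ii). What \cite{fgod} actually does is force finiteness of the \emph{quotient} of the model by indistinguishability with respect to the predicates of $\varphi$: the finiteness axiom must mention $R$ (e.g.\ demanding that $P$ separate $R$-distinguishable points inside the value set), so that any model falsifying $\varphi^{\circ}$ necessarily realizes infinitely many distinct values of $P$ in $\mathcal{A}$, whose accumulation point (which lies in $\mathcal{A}$ by closedness and exists by countability) is then exploited to drive the antecedent to exactly $0$ --- not merely below $1$, which is all a naive $C_\uparrow$/$C_\downarrow$-style attainment conjunct yields. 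You flag this last difficulty yourself, but together with the repair just described it is where the whole proof lives; as written, the argument reduces the theorem to a lemma that is false.
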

In our case, we have
\begin{theorem}\label{teo:nmund}
Let $\mathcal{A}$ be a countable topologically closed subalgebra of $[0,1]_\text{NM}$ (i.e. a countable complete subalgebra of $[0,1]_\text{NM}$). Then $TAUT_\mathcal{A}\forall$ is not recursively enumerable.
\end{theorem}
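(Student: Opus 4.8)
The plan is to reduce to the Gödel case by means of the translation $^*$ of \Cref{def:trad}. It suffices to produce a countable complete Gödel chain $\mathcal{G}$ for which
\[
\mathcal{G}\models\varphi\quad\text{if and only if}\quad\mathcal{A}\models\varphi^*\qquad\text{for every first-order formula }\varphi .
\]
Granting this, the map $\varphi\mapsto\varphi^*$ is clearly computable, so $TAUT_\mathcal{G}\forall=\{\varphi:\varphi^*\in TAUT_\mathcal{A}\forall\}$ exhibits a many--one reduction of $TAUT_\mathcal{G}\forall$ to $TAUT_\mathcal{A}\forall$. Moreover every countable complete Gödel chain is isomorphic to a countable complete --- equivalently (by the remark preceding \Cref{teo:godund}), topologically closed --- subalgebra of $[0,1]_\text{G}$: a countable chain with least and greatest element order-embeds into $[0,1]$ sending the endpoints to $0$ and $1$, the Gödel operations are determined by the order, and completeness is preserved. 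Hence \Cref{teo:godund} applies and $TAUT_\mathcal{G}\forall$ is not recursively enumerable; therefore neither is $TAUT_\mathcal{A}\forall$, since the preimage of a recursively enumerable set under a total computable function is recursively enumerable.

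It remains to construct $\mathcal{G}$. Suppose first that $\mathcal{A}$ has a negation fixpoint. Then, by the connected-rotation description of NM-chains (\cite{bus}; cf. \Cref{def:rot}), $\mathcal{A}\cong\mathcal{G}_\text{NM}$, where $\mathcal{G}$ is the Gödel chain carried by the positive elements of $\mathcal{A}$ together with the fixpoint (relabelled $0$). By \Cref{rem:nmcomp}, $\mathcal{G}$ is complete, and it is obviously countable; and \Cref{cor:gnm} is precisely the required equivalence.

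Suppose instead that $\mathcal{A}$ has no negation fixpoint, and let $\mathcal{A}^-$, $\mathcal{A}^+$ be its negative and positive parts. I claim $\mathcal{A}^-$ has a maximum: it is nonempty ($0\in\mathcal{A}^-$) and bounded above by $1$, so $s\df\sup_\mathcal{A}\mathcal{A}^-$ exists by completeness; were $s\in\mathcal{A}^+$, then $s$ would be the least element of $\mathcal{A}^+$ (every negative element lies below every positive one), whence $n(s)=\max\mathcal{A}^-=\sup_\mathcal{A}\mathcal{A}^-=s$ would be a fixpoint, a contradiction. Thus $s=\max\mathcal{A}^-$ and dually $n(s)=\min\mathcal{A}^+$. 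Adjoining a new point $f$ with $s<f<n(s)$ and $n(f)=f$ yields an NM-chain $\mathcal{A}^f$ whose subalgebra of non-fixpoint elements is $\mathcal{A}$; as a linear order $\mathcal{A}^f$ is still complete, a single point having been added at a cut of the complete chain $\mathcal{A}$ that is realized on both sides. Applying the previous paragraph to $\mathcal{A}^f$ (which has a fixpoint) furnishes a countable complete Gödel chain $\mathcal{G}$ with $\mathcal{A}^f\cong\mathcal{G}_\text{NM}$, so $\mathcal{G}\models\varphi$ iff $\mathcal{A}^f\models\varphi^*$ by \Cref{cor:gnm}; and part~(2) of \Cref{teo:mfix}, applied to the complete fixpoint-free chain $\mathcal{A}$ and its fixpoint version $\mathcal{A}^f$, gives $\mathcal{A}^f\models\varphi^*$ iff $\mathcal{A}\models\varphi^*$. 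Composing, $\mathcal{G}\models\varphi$ iff $\mathcal{A}\models\varphi^*$, as needed.

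The crux of the argument --- and the one place where completeness of $\mathcal{A}$ is genuinely used --- is the fixpoint-free case: one must realize $\mathcal{A}$ as the fixpoint-free part of a \emph{complete} NM-chain with fixpoint, which works exactly because completeness forces the cut between $\mathcal{A}^-$ and $\mathcal{A}^+$ to be filled, so that a fixpoint can be inserted without creating a new gap. Everything else is routine: $^*$ together with \Cref{cor:gnm} transfers the question to Gödel chains, \Cref{teo:mfix} handles the passage to and from the fixpoint, and \Cref{teo:godund} supplies the failure of recursive enumerability.
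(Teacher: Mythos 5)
Your proof is correct and follows essentially the same route as the paper's: translate via $^*$ to reduce to \Cref{teo:godund}, treating the fixpoint case through the connected-rotation correspondence (\Cref{cor:gnm}) and the fixpoint-free case by passing to $\mathcal{A}^f$ via \Cref{teo:mfix}. You are in fact somewhat more careful than the paper, since you explicitly verify that completeness of $\mathcal{A}$ forces $\max\mathcal{A}^-$ and $\min\mathcal{A}^+$ to exist, so that $\mathcal{A}^f$ is again complete --- a point the paper leaves implicit but which is needed to apply \Cref{teo:mfix} and the fixpoint case to $\mathcal{A}^f$.
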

\begin{proof}
Let $\mathcal{A}$ be a countable complete subalgebra of $[0,1]_\text{NM}$. 

If $\mathcal{A}$ has negation fixpoint then, due to the observations of \Cref{rem:nmcomp}, we can easily find a countable complete Gödel chain $\mathcal{B}$ such that $\mathcal{B}_\text{NM}\simeq \mathcal{A}$. From \Cref{teo:tradgnm} we have that $\varphi\in TAUT_\mathcal{B}\forall$ if and only if $\varphi^*\in TAUT_\mathcal{A}\forall$: since $TAUT_\mathcal{B}\forall$ is not recursively enumerable (\Cref{teo:godund}), then the same holds for $TAUT_\mathcal{A}\forall$.

If $\mathcal{A}$ does not have negation fixpoint, from \Cref{teo:mfix} we have that $\varphi^*\in TAUT_\mathcal{A}\forall$ if and only if $\varphi^*\in TAUT_{\mathcal{A}^f}\forall$, for every $\varphi$. Applying the argument of the previous case to $\mathcal{A}^f$, we have the theorem.
\end{proof}
\begin{corollary}
For $\mathcal{A}\in\{NM_\infty, NM^-_\infty, NM'_\infty\}$, $TAUT_\mathcal{A}\forall$ is not recursively enumerable.
\end{corollary}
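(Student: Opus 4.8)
The plan is to read the corollary off directly from \Cref{teo:nmund}, whose hypothesis is that $\mathcal{A}$ be a countable topologically closed (equivalently, countable complete) subalgebra of $[0,1]_\text{NM}$. So all that is needed is to check that each of $NM_\infty$, $NM^-_\infty$ and $NM'_\infty$ meets this hypothesis. That they are subalgebras of $[0,1]_\text{NM}$ is already recorded in \Cref{subsec:sem} (or re-derived from the fact that, in an NM-chain, $*$, $\Rightarrow$, $\sqcap$, $\sqcup$ only produce values among $0$, $1$, the arguments, their maxima and minima, and their negations, so any subset of $[0,1]$ containing $0,1$ and closed under $n(x)=1-x$ is the universe of one), and countability is clear since the three universes are subsets of $\mathbb{Q}\cap[0,1]$. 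Hence the only genuine step is topological closedness in $[0,1]$, that is, compactness in the order topology.

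I would verify closedness by a quick inspection of accumulation points. In $NM_\infty=\{1/n:n\in\mathbb{N}^+\}\cup\{1-1/n:n\in\mathbb{N}^+\}$ both endpoints $0,1$ belong to the set (take $n=1$), and its set of accumulation points is exactly $\{0,1\}$, so $NM_\infty$ is closed; since $\tfrac12$ is isolated in it (its neighbours there are $\tfrac13$ and $\tfrac23$), $NM^-_\infty=NM_\infty\setminus\{\tfrac12\}$ is closed as well. In $NM'_\infty=\{\tfrac12-\tfrac1{2n}:n\in\mathbb{N}^+\}\cup\{\tfrac12+\tfrac1{2n}:n\in\mathbb{N}^+\}\cup\{\tfrac12\}$ the endpoints again appear at $n=1$, the only accumulation point is $\tfrac12$, and $\tfrac12$ lies in the set, so $NM'_\infty$ is closed. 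In each case a closed subset of $[0,1]$ is compact in the order topology, hence the chain is complete. (Among the three, $NM_\infty$ and $NM'_\infty$ carry negation fixpoint $\tfrac12$, while $NM^-_\infty$ does not; accordingly \Cref{teo:nmund} is applied through its ``with fixpoint'' case for the former two, via $NM_\infty\simeq(G_\uparrow)_\text{NM}$ and $NM'_\infty\simeq(G_\downarrow)_\text{NM}$ as in \Cref{rem:nmcomp}, and through its ``without fixpoint'' case for $NM^-_\infty$; but this branching is handled inside the proof of \Cref{teo:nmund} and needs nothing extra here.)

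With both hypotheses in place, \Cref{teo:nmund} applies to each chain and gives that $TAUT_{NM_\infty}\forall$, $TAUT_{NM^-_\infty}\forall$ and $TAUT_{NM'_\infty}\forall$ are not recursively enumerable, which is the corollary. I do not expect any real obstacle: the substance has already been concentrated in \Cref{teo:tradgnm} and \Cref{teo:mfix}, which transport each problem to a countable complete G\"odel chain via the translation $^*$, and in \Cref{teo:nmund}, which then invokes the non recursive enumerability result \Cref{teo:godund} of \cite{fgod}. It is worth noting that the fourth infinite chain ${NM'}^-_\infty$ is deliberately missing from the statement precisely because it fails this closedness test --- $\tfrac12$ is an accumulation point of its universe but not a member --- so the present method does not settle its decidability, which is left open.
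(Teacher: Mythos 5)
Your proposal is correct and matches the paper's (implicit) argument exactly: the corollary is a direct application of \Cref{teo:nmund}, and your verification that each of the three chains is a countable, topologically closed (hence complete) subalgebra of $[0,1]_\text{NM}$ --- together with the observation that ${NM'}^-_\infty$ fails closedness because $\tfrac12$ is an accumulation point not in its universe --- is precisely what is needed.
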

\begin{problem}\label{prob:2}
Which is the arithmetical complexity of $TAUT_{NM{'}^-_\infty}\forall$ ? Is it recursively axiomatizable ?
\end{problem}
\subsubsection{Monadic fragments}\label{subsec:mon}
In this section, we analyze the the (un)decidability status of the validity problem for the monadic fragments associated to the complete subalgebras of $[0,1]_\text{NM}$, as well as the four infinite NM-chains hitherto discussed. Recall that in monadic first-order logic the language contains unary predicates and no functions or constant symbols.

Let $\mathcal{A}$ be a subalgebra of $[0,1]_\text{NM}$ (or of $[0,1]_\text{G}$): with $monTAUT_\mathcal{A}\forall$ we indicate the monadic tautologies associated to $\mathcal{A}$.

In \cite[theorem 1]{mgod} it is showed that the monadic fragment of finite Gödel chains is decidable, but as noted in the subsequent remark, the proof applies to the monadic fragments of arbitrary finite-valued logics. As a consequence we have
\begin{theorem}
Let $\mathcal{A}$ be a finite NM-chain: we have that $monTAUT_\mathcal{A}\forall$ is decidable.
\end{theorem}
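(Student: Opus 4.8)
The plan is to establish a small-model property for monadic formulas over a finite NM-chain and then read off decidability from it; this is exactly the argument indicated in the remark following \cite[theorem 1]{mgod}, and I will spell out the adaptation. Let $\mathcal{A}$ be a finite NM-chain, say $|\mathcal{A}|=n$, and let $\varphi$ be a monadic formula. Without loss of generality $\varphi$ is closed, since $\varphi$ and its universal closure have the same truth value in every $\mathcal{A}$-model. Let $P_1,\dots,P_k$ be the unary predicates occurring in $\varphi$ (recall the monadic language has no constants or function symbols). Given an $\mathcal{A}$-model $\mathbf{M}=\langle M,\{r_P\}_P\rangle$, define the \emph{type} of $a\in M$ to be $\tau(a)\df(r_{P_1}(a),\dots,r_{P_k}(a))\in A^{k}$; there are at most $n^{k}$ distinct types.

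First I would prove a type-invariance lemma: for every subformula $\psi$ of $\varphi$ and all evaluations $v_1,v_2\colon\mathrm{Var}\to M$ with $\tau(v_1(y))=\tau(v_2(y))$ for every free variable $y$ of $\psi$, one has $\Vert\psi\Vert^{\mathcal{A}}_{\mathbf{M},v_1}=\Vert\psi\Vert^{\mathcal{A}}_{\mathbf{M},v_2}$. The induction is routine: the atomic case is immediate from the definition of $\tau$, the connectives commute with the truth value, and for $\psi\df(\forall x)\chi$ (the $\exists$ case is dual) the evaluations $v_1[x\mapsto a]$ and $v_2[x\mapsto a]$ agree up to $\tau$ on the free variables of $\chi$ for every $a\in M$, so by the induction hypothesis the corresponding truth values agree and hence so do their infima (which exist, as $\mathcal{A}$ is finite). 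Next, choose $M'\subseteq M$ consisting of exactly one element of each realised type, so $|M'|\le n^{k}$, and let $\mathbf{M}'$ be the $\mathcal{A}$-model on $M'$ obtained by restricting the interpretations; $\mathbf{M}'$ is finite, hence safe. A second induction on subformulas, using the type-invariance lemma at the quantifier step to see that passing from $M$ to $M'$ discards no truth value realised by a quantified subformula, gives $\Vert\psi\Vert^{\mathcal{A}}_{\mathbf{M}',v'}=\Vert\psi\Vert^{\mathcal{A}}_{\mathbf{M},v'}$ for every $v'\colon\mathrm{Var}\to M'$; in particular $\Vert\varphi\Vert^{\mathcal{A}}_{\mathbf{M}'}=\Vert\varphi\Vert^{\mathcal{A}}_{\mathbf{M}}$. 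Thus if $\mathcal{A}\not\models\varphi$, then $\varphi$ already fails in an $\mathcal{A}$-model of size at most $n^{k}$.

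Decidability then follows: from $\varphi$ one computes $k$, hence a bound $n^{k}$ on the size of a potential countermodel; there are only finitely many $\mathcal{A}$-models with universe $\{1,\dots,d\}$ for $d\le n^{k}$, and in each of them $\Vert\varphi\Vert^{\mathcal{A}}$ is effectively computable, being a finite expression over the finite algebra $\mathcal{A}$. Searching through all of them decides whether $\mathcal{A}\not\models\varphi$, so $monTAUT_{\mathcal{A}}\forall$ is decidable. I expect the only delicate point to be the quantifier case in the two inductions — verifying that collapsing to the type-quotient preserves the infima and suprema that define the quantifiers — which is precisely what the type-invariance lemma is designed to secure; everything else is bookkeeping.
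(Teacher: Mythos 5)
Your proof is correct and takes essentially the same route as the paper, which establishes this theorem simply by invoking the remark after \cite[theorem 1]{mgod} that the decidability argument for monadic fragments of finite G\"odel chains works for arbitrary finite-valued logics: the type-invariance lemma, the collapse to one representative per realised type giving a countermodel of size at most $n^k$, and the exhaustive search over finitely many bounded-size models are exactly that standard argument written out in full. The quantifier steps are unproblematic since $\mathcal{A}$ is finite, so every model is safe and all the relevant infima and suprema exist and are preserved under the type quotient, as your lemma guarantees.
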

However, for the infinite case the situation is more difficult; indeed
\begin{theorem}[{\cite{mgod}}]\label{teo:mongodund}
Let $\mathcal{A}$ be an infinite complete subalgebra of $[0,1]_\text{G}$: with the possible exception of $\mathcal{A}=G_\uparrow$, $monTAUT_\mathcal{A}\forall$ is undecidable.
\end{theorem}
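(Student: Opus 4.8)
Theorem~\ref{teo:mongodund} is taken from \cite{mgod}; here is the shape of the argument I would give for it. The proof splits into two unrelated halves. The logic $G_\uparrow$ is set apart: it is the only infinite complete subalgebra of $[0,1]_\text{G}$ \emph{all} of whose accumulation points coincide with its top, that is, the only one whose lattice reduct has order type $\omega+1$ --- a complete well-order in which every infimum occurring in an evaluation is attained as a minimum and every supremum is either a maximum or equals $1$. For such a chain monadic validity can be decided, e.g.\ by coding a monadic $G_\uparrow$-model as a labelling of $\langle\omega+1,\leq\rangle$ and reducing to the decidable monadic second-order theory of that order, or by a direct small-model argument. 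Hence the substance of the theorem is the \emph{undecidability} claim for every other infinite complete $\mathcal{A}\subseteq[0,1]_\text{G}$, and it is this that I would prove.

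The first point is that \Cref{teo:godund}, the undecidability of the \emph{full} first-order theory $TAUT_\mathcal{A}\forall$, does not suffice, since the monadic language is a genuine restriction: one has to simulate a binary predicate by unary ones. The plan is as follows. If $\mathcal{A}\neq G_\uparrow$ is infinite and complete, then $\mathcal{A}$ has an accumulation point $a$ different from its top $1$ --- approached from below, or from above (as is $0$ in $G_\downarrow$), or both. I would use one unary predicate $A$ together with a ``ladder'' sentence $\lambda_\mathcal{A}$, tailored to the accumulation structure around $a$, forcing every model of $\lambda_\mathcal{A}$ to contain a definable copy of $\langle\mathbb{N},<\rangle$ whose $n$-th level is pinned down by the value of $A$; a second auxiliary unary predicate then lets one read a binary relation $R(x,y)$ off the values $\Vert A(x)\Vert$ and $\Vert A(y)\Vert$. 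Relativising to $\lambda_\mathcal{A}$ turns any classical first-order sentence over a single binary predicate into a monadic $\mathcal{A}$-sentence that is $\mathcal{A}$-valid iff the original is classically valid --- equivalently, and this is the variant I would actually carry out, one reduces the domino problem or the halting problem for two-counter machines. As these are undecidable, so is $monTAUT_\mathcal{A}\forall$.

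The hard part is uniformity over the admissible chains. The ladder sentence cannot be a single generic formula: its design must match the order type of $\mathcal{A}$ --- a chain discrete apart from one limit from above ($G_\downarrow$), a chain dense on some subinterval ($[0,1]_\text{G}$ and many others), or a well-ordered chain of order type $\omega+k$, $\omega\cdot 2$, and beyond --- and in each case one has to verify that no parasitic truth values can slip into a (safe) $\mathcal{A}$-model, so that the intended copy of $\mathbb{N}$ is genuinely what $\lambda_\mathcal{A}$ describes. Carrying this case analysis through while keeping the reduction computable is the technical core of the argument, and it is precisely what forces the statement to exempt $G_\uparrow$ instead of resting on a cleaner order-theoretic dichotomy; the full details are in \cite{mgod}.
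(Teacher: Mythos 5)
The paper does not prove \Cref{teo:mongodund} at all: it is imported verbatim from \cite{mgod} and used as a black box, so there is no internal proof to compare yours against. Your second and third paragraphs --- reducing an undecidable classical problem to $monTAUT_\mathcal{A}\forall$ by exploiting the infinitely many truth values clustering at an accumulation point other than $1$ to simulate a binary predicate with unary ones, with a case analysis on the order type of $\mathcal{A}$ --- are a fair, if schematic, description of the strategy actually used in \cite{mgod}, and you correctly observe that \Cref{teo:godund} by itself does not yield the monadic result.

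The genuine error is in your first paragraph. The clause ``with the \emph{possible} exception of $\mathcal{A}=G_\uparrow$'' is not a decidability claim for $G_\uparrow$; it records that the status of $monTAUT_{G_\uparrow}\forall$ is unknown. The present paper depends on exactly this reading: \Cref{prob:3} leaves the decidability of $monTAUT_{NM_\infty}\forall$ and $monTAUT_{NM^-_\infty}\forall$ open precisely because, as the concluding section says, it ``is strictly connected with the analogous problem for G\"odel logic with the chain $G_\uparrow$.'' Your sketched decidability argument would, if it worked, settle that open problem, and it does not work as stated: a safe $G_\uparrow$-model may have an arbitrary, possibly uncountable, domain, and the value of a quantified formula is an infimum or supremum of an arbitrary subset of the truth-value set; such a model is not a labelling of $\langle\omega+1,\leq\rangle$, and neither a small-model property nor a reduction to the monadic second-order theory of $\omega$ is known for this semantics (the fact that every infimum of a set of values is attained or equal to $1$ does not by itself bound the models). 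The correct shape of the proof is therefore asymmetric: undecidability for every infinite complete $\mathcal{A}\neq G_\uparrow$, and silence about $G_\uparrow$ --- not a ``two unrelated halves'' decomposition with a decidability half.
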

Moving to the NM case, we obtain
\begin{theorem}
Let $\mathcal{A}$ be an infinite complete subalgebra of $[0,1]_\text{NM}$: with the possible exception of $\mathcal{A}\in\{NM_\infty, NM^-_\infty\}$, $monTAUT_\mathcal{A}\forall$ is undecidable.
\end{theorem}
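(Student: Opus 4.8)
The plan is to adapt the reduction used in the proof of \Cref{teo:nmund}, transferring the undecidability of $monTAUT_\mathcal{B}\forall$ for a suitable infinite complete G\"odel chain $\mathcal{B}\neq G_\uparrow$ (\Cref{teo:mongodund}) to $monTAUT_\mathcal{A}\forall$ via the translation $^*$ of \Cref{def:trad}, while checking that everything stays inside the monadic fragment.

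First I would reduce to the case of a chain with a negation fixpoint. Let $\mathcal{A}$ be an infinite complete subalgebra of $[0,1]_\text{NM}$ with $\mathcal{A}\notin\{NM_\infty,NM^-_\infty\}$, and set $\mathcal{A}'\df\mathcal{A}$ if $\mathcal{A}$ has a negation fixpoint and $\mathcal{A}'\df\mathcal{A}^f$ otherwise, where $\mathcal{A}^f$ is $\mathcal{A}$ with the fixpoint inserted between the greatest negative and the least positive element. Completeness of $\mathcal{A}$ without fixpoint forces the positive part of $\mathcal{A}$ to have a least element, so $\mathcal{A}^f$ introduces no new gap and is again complete; hence in all cases $\mathcal{A}'$ is an infinite complete NM-chain with negation fixpoint. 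Since the operations of an NM-chain are determined by its order (the negation being the unique order-reversing involution), \Cref{rem:nmcomp} gives $\mathcal{A}'\simeq\mathcal{B}_\text{NM}$ for an infinite complete G\"odel chain $\mathcal{B}$, which may be taken as a complete subalgebra of $[0,1]_\text{G}$ by linearly rescaling onto $[0,1]$ the part of $\mathcal{A}'$ lying above the fixpoint. If $\mathcal{B}=G_\uparrow$ then $\mathcal{A}'=(G_\uparrow)_\text{NM}=NM_\infty$, whence $\mathcal{A}\in\{NM_\infty,NM^-_\infty\}$ (recall $NM^-_\infty=NM_\infty\setminus\{\tfrac12\}$), contrary to assumption; so $\mathcal{B}\neq G_\uparrow$ and $monTAUT_\mathcal{B}\forall$ is undecidable by \Cref{teo:mongodund}.

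Next I would set up the reduction. The map $\varphi\mapsto\varphi^*$ is primitive recursive; after rewriting every $(\exists x)\psi$ as $\neg(\forall x)\neg\psi$ (which has the same truth value in every NM-chain model, since the two quantifiers are interdefinable in NM$\forall$) one may assume formulas are built only from atoms, $\bot$, $\land$, $\&$, $\to$, $\forall$, and then \Cref{def:trad} introduces only $\&$ and squarings and leaves all predicate symbols unchanged, so $^*$ sends monadic formulas to monadic formulas. By \Cref{cor:gnm} we have $\mathcal{B}\models\varphi$ iff $\mathcal{A}'\models\varphi^*$, and when $\mathcal{A}'=\mathcal{A}^f$ the second clause of \Cref{teo:mfix} further gives $\mathcal{A}'\models\varphi^*$ iff $\mathcal{A}\models\varphi^*$; hence $\varphi\in monTAUT_\mathcal{B}\forall$ iff $\varphi^*\in monTAUT_\mathcal{A}\forall$. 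A decision procedure for $monTAUT_\mathcal{A}\forall$ would therefore decide $monTAUT_\mathcal{B}\forall$, contradicting the previous paragraph, so $monTAUT_\mathcal{A}\forall$ is undecidable.

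Most of the work is already done by \Cref{cor:gnm}, \Cref{teo:mfix} and \Cref{teo:mongodund}; what remains is bookkeeping. The points that need care are: verifying that $\mathcal{A}^f$ is complete and is realized as $\mathcal{B}_\text{NM}$ with $\mathcal{B}$ a subalgebra of $[0,1]_\text{G}$; checking that the only chains for which the associated $\mathcal{B}$ collapses to $G_\uparrow$ are exactly $NM_\infty$ and $NM^-_\infty$; and --- the genuinely delicate step --- making sure that after the elimination of $\exists$ the translation $^*$ really keeps us within the monadic fragment, so that the reduction above is between monadic validity problems.
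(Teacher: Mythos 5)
Your proof is correct and follows essentially the same route as the paper: reduce to a chain with negation fixpoint via \Cref{teo:mfix}, realize it as $\mathcal{B}_\text{NM}$ for a complete G\"odel chain $\mathcal{B}\neq G_\uparrow$, and transfer undecidability from \Cref{teo:mongodund} through the translation $^*$. You are in fact more careful than the paper on two points it leaves implicit --- that $\mathcal{A}^f$ remains complete and that $^*$ (after eliminating $\exists$) stays within the monadic fragment --- so no changes are needed.
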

\begin{proof}
Let $\mathcal{A}$ be a complete subalgebra of $[0,1]_\text{NM}$, with $\mathcal{A}\notin\{NM_\infty, NM^-_\infty\}$.

If $\mathcal{A}$ has negation fixpoint then, due to the observations of \Cref{rem:nmcomp}, we can easily find a complete Gödel chain $\mathcal{B}$, subalgebra of $[0,1]_\text{G}$, such that $\mathcal{B}_\text{NM}\simeq \mathcal{A}$: since $\mathcal{A}\neq NM_\infty$ then $\mathcal{B}\neq G_\uparrow$. From \Cref{teo:tradgnm} we have that $\varphi\in TAUT_\mathcal{B}\forall$ if and only if $\varphi^*\in TAUT_\mathcal{A}\forall$: since $TAUT_\mathcal{B}\forall$ is undecidable (\Cref{teo:mongodund}), then the same holds for $TAUT_\mathcal{A}\forall$.

If $\mathcal{A}$ does not have negation fixpoint, from \Cref{teo:mfix} we have that $\varphi^*\in TAUT_\mathcal{A}\forall$ if and only if $\varphi^*\in TAUT_{\mathcal{A}^f}\forall$, for every $\varphi$. Applying the argument of the previous case to $\mathcal{A}^f$, we have the theorem.
\end{proof}
\begin{corollary}
$monTAUT_{NM'_\infty}\forall$ is undecidable.
\end{corollary}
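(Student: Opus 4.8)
The plan is to instantiate the theorem above at $\mathcal{A} = NM'_\infty$, so the entire content of the proof is the verification that $NM'_\infty$ meets the hypotheses and is not one of the two exceptional chains. First I would recall that $NM'_\infty$ is infinite and is one of the four infinite chains noted in \Cref{sec:prelim} to be subalgebras of $[0,1]_\text{NM}$; in particular its universe is closed under $n(x) = 1 - x$. Next I would check that $NM'_\infty$ is complete, i.e.\ (by the characterization recalled just before \Cref{teo:godund}) topologically closed as a subset of $[0,1]$: it contains $0$ and $1$ (taking $n = 1$ in the two defining sequences), every point $\frac{1}{2} \pm \frac{1}{2n}$ with $n$ finite is isolated, and the unique accumulation point is $\frac{1}{2}$, which belongs to the universe; hence $NM'_\infty$ is closed and its lattice reduct is a complete lattice. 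Finally, $NM'_\infty \neq NM_\infty$ and $NM'_\infty \neq NM^-_\infty$, because the latter two chains accumulate at both $0$ and $1$ whereas $NM'_\infty$ accumulates only at $\frac{1}{2}$. The theorem above therefore applies and gives that $monTAUT_{NM'_\infty}\forall$ is undecidable.

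It is instructive to also spell out the direct argument, which is the one hidden inside the proof of the preceding theorem. By \Cref{rem:nmcomp}, $NM'_\infty = (G_\downarrow)_\text{NM}$, and $NM'_\infty$ has the negation fixpoint $\frac{1}{2}$. The translation $^*$ of \Cref{def:trad} carries monadic formulas to monadic formulas: it only replaces each atom $\psi$ by $\psi^2$ and squares implications and universally quantified subformulas, so it introduces no predicate symbols of arity greater than one. Hence \Cref{cor:gnm} specializes to $\varphi \in monTAUT_{G_\downarrow}\forall$ iff $\varphi^* \in monTAUT_{NM'_\infty}\forall$. Since $G_\downarrow \neq G_\uparrow$, \Cref{teo:mongodund} tells us that $monTAUT_{G_\downarrow}\forall$ is undecidable, and transporting this undecidability along the (effective) map $\varphi \mapsto \varphi^*$ yields the undecidability of $monTAUT_{NM'_\infty}\forall$.

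There is essentially no obstacle in this argument: beyond invoking \Cref{teo:mongodund} (the genuinely hard input, imported from \cite{mgod}), the only things to verify are the topological closedness of $NM'_\infty$ and the fact that $^*$ preserves monadicity, both of which are immediate from the definitions.
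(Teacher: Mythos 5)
Your proof is correct and follows exactly the paper's (implicit) argument: the corollary is an immediate instantiation of the preceding theorem at $\mathcal{A}=NM'_\infty$, which is indeed an infinite, topologically closed (hence complete) subalgebra of $[0,1]_\text{NM}$ distinct from $NM_\infty$ and $NM^-_\infty$. Your unfolded second paragraph matches the proof of that theorem as well, and your explicit check that the translation $\varphi\mapsto\varphi^*$ preserves monadicity is a worthwhile detail that the paper leaves tacit.
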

\begin{problem}\label{prob:3}
For $\mathcal{A}\in\{NM_\infty, NM^-_\infty, {NM'}^-_\infty\}$, is $monTAUT_\mathcal{A}\forall$ decidable ?
\end{problem}
\section{Open Problems}
Inspired by the work done in \cite{fgod,mgod} for (first-order) Gödel logics, in this paper we investigated the first-order tautologies associated with particular NM-chains: moreover, we have showed some decidability and undecidability results, for the full logics and the monadic case. 

Many questions are still open. The main one is the search for a full classification, in analogy with the one done for Gödel logics in \cite{fgod,mgod,pre}, of the (existence of) first-order logics associated to the various subalgebras of $[0,1]_\text{NM}$: for the subalgebras whose set of first-order tautologies is not recursively axiomatizable, instead, it could be studied its arithmetical complexity (for Gödel logics this has been done in \cite{fgod,mgod,pre,hajgod,hajgod1,hajgod2}).  Another theme that has not been analysed here concerns the (first-order) satisfiability problem about the subalgebras of $[0,1]_\text{NM}$ (for Gödel logics this has been done in \cite{bcp}).

We now discuss two more technical (and specific) problems, previously introduced in this paper.

\Cref{prob:2} is particularly interesting: if $TAUT_{NM{'}^-_\infty}\forall$ will result recursively axiomatizable, then the next step will be the search for a first-order logic complete with respect to  $NM{'}^-_\infty$. This logic could be a relevant infinite-valued logic, because $NM{'}^-_\infty$ satisfies the quantifiers shifting rules and hence we could work with formulas in prenex normal form.

Finally, consider \Cref{prob:3}: for $NM{'}^-_\infty$, this is a particular case of \cref{prob:2}. In the case in which $\mathcal{A}\in\{NM_\infty, NM^-_\infty\}$, instead, the solution is strictly connected with the analogous problem for Gödel logic with the chain $G_\uparrow$.

\paragraph{Acknowledgements}
The author would like to thank to Professor Stefano Aguzzoli for the suggestions, in particular concerning the translation from $\varphi$ to $\varphi^*$ introduced in \Cref{def:trad}.
\bibliography{NMfirst-order}
\bibliographystyle{amsalpha}
\end{document}